
\documentclass[a4,12pt]{amsart}
\oddsidemargin 0mm
\evensidemargin 0mm
\topmargin 0mm
\textwidth 160mm
\textheight 230mm
\tolerance=9999
\usepackage{amssymb,amstext,amsmath,amscd,amsthm,amsfonts,enumerate,latexsym}
\usepackage{color}
\usepackage[dvipdfmx]{graphicx}
\usepackage[all]{xy}
\theoremstyle{plain}
\newtheorem{thm}{Theorem}[section]

\newtheorem*{thm*}{Theorem}
\newtheorem*{cor*}{Corollary}

\newtheorem{prop}[thm]{Proposition}

\newtheorem{lemma}[thm]{Lemma}
\newtheorem{lem}[thm]{Lemma}
\newtheorem{cor}[thm]{Corollary}

\newtheorem{claim}[thm]{Claim}
\newtheorem*{claim*}{Claim}

\theoremstyle{definition}
\newtheorem{defn}[thm]{Definition}

\newtheorem{ex}[thm]{Example}

\newtheorem{rem}[thm]{Remark}
\newtheorem{remark}[thm]{Remark}

\theoremstyle{remark}

\numberwithin{equation}{thm}


\def\pd{\operatorname{pd}}
\def\GCD{\operatorname{GCD}}
\def\Ext{\operatorname{Ext}}

\def\Im{\operatorname{Im}}

\def\Ker{\operatorname{Ker}}

\def\Hom{\operatorname{Hom}}
\def\RHom{\mathrm{{\bf R}Hom}}

\def\rank{\mathrm{rank}}

\def\e{\mathrm{e}}
\def\m{\mathfrak m}
\def\n{\mathfrak n}

\newcommand{\Ann}{\mathrm{Ann}}

\newcommand{\rma}{\mathrm{a}}

\newcommand{\rmc}{\mathrm{c}}

\newcommand{\rme}{\mathrm{e}}
\newcommand{\rmf}{\mathrm{f}}

\newcommand{\rmr}{\mathrm{r}}

\newcommand{\rmI}{\mathrm{I}}

\newcommand{\rmK}{\mathrm{K}}

\newcommand{\rmQ}{\mathrm{Q}}

\newcommand{\calA}{\mathcal{A}}
\newcommand{\calB}{\mathcal{B}}
\newcommand{\calC}{\mathcal{C}}

\newcommand{\calG}{\mathcal{G}}

\newcommand{\calR}{\mathcal{R}}
\newcommand{\calS}{\mathcal{S}}
\newcommand{\calT}{\mathcal{T}}

\newcommand{\calX}{\mathcal{X}}

\newcommand{\fka}{\mathfrak{a}}

\newcommand{\fkc}{\mathfrak{c}}

\newcommand{\fkm}{\mathfrak{m}}

\newcommand{\fkp}{\mathfrak{p}}
\newcommand{\fkq}{\mathfrak{q}}

\newcommand{\mapright}[1]{%
\smash{\mathop{%
\hbox to 1cm{\rightarrowfill}}\limits^{#1}}}

\newcommand{\mapleft}[1]{%
\smash{\mathop{%
\hbox to 1cm{\leftarrowfill}}\limits_{#1}}}

\def\AGL{\operatorname{AGL}}

\def\Ass{\operatorname{Ass}}

\def\grade{\mathrm{grade}}

\def\gr{\mbox{\rm gr}}
\def\ch{\operatorname{ch}}

\title[Ulrich ideals and $2$-$\AGL$ rings]{Ulrich ideals and $2$-$\AGL$ rings}

\author{Shiro Goto}
\address{Department of Mathematics, School of Science and Technology, Meiji University, 1-1-1 Higashi-mita, Tama-ku, Kawasaki 214-8571, Japan}
\email{shirogoto@gmail.com}

\author{Ryotaro Isobe}
\address{Department of Mathematics and Informatics, Graduate School of Science and Technology, Chiba University, 1-33 Yayoi-cho, Inage-ku, Chiba, 263-8522, Japan}
\email{r.isobe.math@gmail.com}

\author{Naoki Taniguchi}
\address{Global Education Center, Waseda University, 1-6-1 Nishi-Waseda, Shinjuku-ku, Tokyo 169-8050, Japan}
\email{naoki.taniguchi@aoni.waseda.jp}
\urladdr{http://www.aoni.waseda.jp/naoki.taniguchi/}

\thanks{2010 {\em Mathematics Subject Classification.} 13H10, 13H15, 13A15.}
\thanks{{\em Key words and phrases.} Cohen-Macaulay ring, Gorenstein ring, almost Gorenstein ring, 2-almost Gorenstein ring, canonical ideal, Ulrich ideal} 

\thanks{The first author was partially supported by JSPS Grant-in-Aid for Scientific Research (C) 16K05112. The third author was partially supported by JSPS Grant-in-Aid for Young Scientists (B) 17K14176 and Waseda University Grant for Special Research Projects 2018K-444, 2018S-202.}


\begin{document}
\maketitle

\setlength{\baselineskip}{15pt}

\begin{abstract}
The notion of $2$-almost Gorenstein local ring ($2$-$\AGL$ ring for short) is a generalization of the notion of almost Gorenstein local ring from the point of view of Sally modules of canonical ideals. In this paper, for further developments of the theory, we discuss three different topics on $2$-$\AGL$ rings. The first one is to clarify the structure of minimal presentations of canonical ideals, and the second one is the study of the question of when certain fiber products, so called amalgamated duplications are $2$-$\AGL$ rings.  We also explore Ulrich ideals in $2$-$\AGL$ rings, mainly two-generated ones.  
\end{abstract}

{\footnotesize \tableofcontents}


\section{Introduction}

The series \cite{CGKM, GGHV, GMP, GMTY1, GMTY2, GMTY3, GMTY4, GRTT, GTT, GTT2} of researches are motivated and supported by the strong desire to stratify Cohen-Macaulay rings, finding new and interesting classes which naturally include that of Gorenstein rings. As is already pointed out by these works, the class of {\it almost Gorenstein local rings} (AGL rings for short) could be  a very nice candidate for such  classes. The prototype of AGL rings is found in the work \cite{BF} of V. Barucci and R. Fr\"oberg in 1997, where they introduced the notion of AGL ring for one-dimensional analytically unramified local rings, developing a beautiful theory on numerical semigroups. In 2013, the first author, N. Matsuoka, and T. T. Phuong \cite{GMP} extended the notion of AGL ring given by \cite{BF} to arbitrary one-dimensional Cohen-Macaulay local rings, by means of the first Hilbert coefficients of canonical ideals. They broadly opened up the theory in dimension one, which prepared for the higher dimensional notion of AGL ring provided  in 2015 by \cite{GTT}.  Subsequently in 2017, T. D. M. Chau, the first author, S. Kumashiro, and N. Matsuoka \cite{CGKM} defined the notion of $2$-${\rm AGL}$ ring as a possible successor of AGL rings of dimension one. To explain the motivations for the present researches, we need to remind the reader of $2$-AGL rings more precisely.

Throughout, let $(R, \m)$ be a Cohen-Macaulay local ring with $\dim R=1$, possessing the canonical module $\rmK_R$. We say that an ideal $I$ in $R$ is a {\it canonical ideal} of $R$, if $I \neq R$, and $I \cong \rmK_R$ as an $R$-module. In what follows, we assume that the ring $R$ possesses a canonical ideal, which contains a parameter ideal $Q=(a)$ of $R$ as a reduction. This assumption is automatically satisfied if $R$ has an infinite residue class field. Let $\calT= \calR(Q)=R[Qt]$ and $\calR = \calR(I)=R[It]$ be the Rees algebras of $Q$ and $I$ respectively, where $t$ denotes an indeterminate. We set $\calS_Q(I) = I\calR/I\calT$ and call it the {\it Sally module} of $I$ with respect to $Q$ (\cite{V1}). Let $\rme_i(I)~(i=0, 1)$ be the $i$-th Hilbert coefficients of $R$ with respect to $I$, that is, the integers satisfy the equality
$$
\ell_R(R/I^{n+1}) = \rme_0(I) \binom{n+1}{1} - \rme_1(I)  \ \ \mbox{for all}\ \ n \gg 0
$$
where $\ell_R(M)$ denotes, for each $R$-module $M$, the length of $M$.
We set $\rank~\calS_Q(I) = \ell_{\calT_\fkp}([\calS_Q(I)]_{\fkp})$ which is called the {\it rank} of $\calS_Q(I)$, where $\fkp = \m \calT$. We then have 
$$
\rank~\calS_Q(I) = \rme_1(I) - \left[\rme_0(I) - \ell_R(R/I)\right]
$$
(\cite[Proposition 2.2 (3)]{GNO}). Note that $\rank~\calS_Q(I)$ is an invariant of $R$, independent of the choice of canonical ideals $I$ and the reductions $Q$ of $I$ (see \cite[Theorem 2.5]{CGKM}). With this notation we have the following.

\begin{defn}(\cite[Definition 1.3]{CGKM})\label{1.1}
We say that $R$ is a {\it $2$-almost Gorenstein local ring} ($2$-${\rm AGL}$ ring for short), if $\rank~\calS_Q(I) =2$, that is, $\rme_1(I) = \rme_0(I) - \ell_R(R/I) + 2$.
\end{defn}

\noindent
Because $R$ is a non-Gorenstein AGL ring if and only if $\rank~\calS_Q(I) =1$ (\cite[Theorem 3.16]{GMP}), $2$-AGL rings could be considered to be one of the successors of AGL rings.

We set $K = a^{-1}I$ in the total ring $\rmQ(R)$ of fractions of $R$. Therefore, $K$ is a fractional ideal of $R$ such that $R \subseteq K \subseteq \overline{R}$ (here $\overline{R}$ stands for the integral closure of $R$ in $\rmQ(R)$) and $K \cong \rmK_R$, which we call a {\em canonical fractional ideal} of $R$.  We set $S =R[K]$. Hence, $S$ is a module-finite birational extension of $R$, and it is independent of the choice of $K$ (\cite[Theorem 2.5 (3)]{CGKM}). Let $\fkc = R:S$. We are now able to state the characterization of $2$-AGL rings given by \cite{CGKM}, which we shall often refer to, in the present paper.

\begin{thm}[{\cite[Theorem 1.4]{CGKM}}]\label{mainref} The following conditions are equivalent.
\begin{enumerate}[{\rm (1)}]
\item $R$ is a $2$-$\AGL$ ring. 
\item There is an exact sequence
$0 \to \calB(-1) \to \calS_Q(I) \to \calB(-1) \to 0$
of graded $\calT$-modules, where $\calB=\calT/\m \calT \ (\cong (R/\m)[t])$.
\item $K^2 = K^3$ and $\ell_R(K^2/K) = 2$.
\item $I^3=QI^2$ and $\ell_R(I^2/QI) = 2$.
\item $R$ is not a Gorenstein ring but $\ell_R(S/[K:\fkm])=1$.
\item $\ell_R(S/K)=2$.
\item $\ell_R(R/\fkc) = 2$.
\end{enumerate}
When this is the case, $\fkm{\cdot}\calS_Q(I) \ne (0)$, whence  the exact sequence given by condition $(2)$ is not split, and we have 
$$\ell_R(R/I^{n+1}) = \rme_0(I)\binom{n+1}{1}- \left(\rme_0(I) - \ell_R(R/I) +2\right)$$
for all $n \ge 1$.
\end{thm}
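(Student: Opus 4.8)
The plan is to translate the whole statement into properties of the canonical fractional ideal $K$ and of $S=R[K]$, $\fkc=R:S$, and then run the implications in the order $(3)\Leftrightarrow(4)$, $(6)\Leftrightarrow(7)$, $(3)\Leftrightarrow(5)\Leftrightarrow(6)$, $(3)\Rightarrow(2)\Rightarrow(1)$, $(1)\Rightarrow(6)$, $(6)\Rightarrow(3)$. The first reduction is elementary: writing $Q=(a)$, multiplication by $a^{-(n+1)}$ gives $R$-isomorphisms $I^{n+1}/Q^{n}I\cong K^{n+1}/K$ and $I^{n+1}/QI^{n}\cong K^{n+1}/K^{n}$ for all $n\ge 0$; under the first, the $n$th graded piece $[\calS_Q(I)]_n = I^{n+1}t^n/Q^nI\,t^n$ becomes $K^{n+1}/K$, and the action of $at$ becomes the inclusion‑induced map $K^{n+1}/K\hookrightarrow K^{n+2}/K$. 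This yields $(3)\Leftrightarrow(4)$ at once, and the second isomorphism will give the Hilbert‑function statement at the end. Since $S$ is module‑finite over $R$, the chain $K\subseteq K^2\subseteq\cdots\subseteq S$ stabilizes, and a stable term $K^m$ is multiplicatively closed and contains $K$, hence equals $R[K]=S$; thus $[\calS_Q(I)]_n=S/K$ for $n\gg0$, so $\calS_Q(I)$ coincides in large degrees with $\bigoplus_{n\gg0}(S/K)(at)^n$, with $R$ acting naturally on $S/K$ and $at$ acting there as the identity. Localizing at $\fkp=\m\calT$: the complement of this tail is $\fkp$‑power torsion (being of finite length over $R$), while $\calT_\fkp$ is $R$‑flat and $S/K$ has finite length, so the tail has $\calT_\fkp$‑length $\ell_R(S/K)$; hence $\rank\calS_Q(I)=\ell_R(S/K)$, which gives $(1)\Leftrightarrow(6)$.

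For the equivalences among $(3)$, $(5)$, $(6)$, $(7)$ I would argue on the $K$‑side, using canonical reflexivity $K:(K:\fka)=\fka$ for a rank‑one maximal Cohen–Macaulay fractional ideal $\fka$ (such as $S$, $K^2$, $\m$), the vanishing $\Ext^1_R(\fka,K)=0$ for such $\fka$, and the fact that $\Hom_R(-,K)$ preserves lengths on finite‑length modules. First, $(3)\Rightarrow(6)$: if $K^2=K^3$ then $K^2$ is multiplicatively closed and contains $K$, so $K^2=S$ and $\ell_R(S/K)=\ell_R(K^2/K)=2$. Next, $\fkc=K:S$: indeed $xS\subseteq K$ with $K\subseteq S$ forces $xK\subseteq K$, so $K:S\subseteq K:K=R$, whence $K:S$, being an ideal of $S$ lying in $R$, is contained in $R:S=\fkc$, while the reverse inclusion is clear. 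Applying $\Hom_R(-,K)$ to $0\to\fkc\to R\to R/\fkc\to0$ and using $\Ext^1_R(R,K)=0$ together with $K:\fkc=K:(K:S)=S$ gives $\Ext^1_R(R/\fkc,K)\cong S/K$, so $\ell_R(R/\fkc)=\ell_R(S/K)$, i.e. $(6)\Leftrightarrow(7)$. Likewise, applying $\Hom_R(-,K)$ to $0\to K\to K:\m\to(K:\m)/K\to0$ gives $\Ext^1_R((K:\m)/K,K)\cong R/\m$, so $\ell_R((K:\m)/K)=1$; since $\fkc\subseteq\m$ (here $R$ is not Gorenstein) forces $K:\m\subseteq K:\fkc=S$, we obtain $\ell_R(S/K)=\ell_R\bigl(S/(K:\m)\bigr)+1$, whence $(5)\Leftrightarrow(6)$.

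What remains, and what I expect to be the main obstacle, is $(6)\Rightarrow(3)$: from $\ell_R(S/K)=2$ one must deduce $K^2=K^3$. The chain $K\subsetneq K^2\subseteq K^3\subseteq\cdots\subseteq S$ strictly increases until it reaches $S$, so $\ell_R(K^2/K)\in\{1,2\}$; the case $\ell_R(K^2/K)=2$ forces $K^2=S=K^3$, which is $(3)$, so one has to rule out $\ell_R(K^2/K)=1$, equivalently to prove that $\ell_R(I^2/QI)=1$ implies $I^3=QI^2$ for a canonical ideal. I would derive this from the one‑dimensional theory of almost Gorenstein rings: a canonical ideal with $\ell_R(I^2/QI)=1$ makes $R$ a (non‑Gorenstein) almost Gorenstein local ring, for which $I^3=QI^2$ (this is the content of $\rank\calS_Q(I)=1\Leftrightarrow R$ is a non‑Gorenstein AGL ring, \cite{GMP}). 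Granting this, $(1)$, $(3)$, $(4)$, $(5)$, $(6)$, $(7)$ are equivalent. The equivalence with $(2)$ and the final assertions then follow cleanly: once $(3)$ holds, $[\calS_Q(I)]_n\cong S/K$ for every $n\ge1$, so $\calS_Q(I)\cong(S/K)\otimes_R\calT(-1)$ as graded $\calT$‑modules; choosing $0\to R/\m\to S/K\to R/\m\to0$ and tensoring with the $R$‑flat module $\calT(-1)$ produces $0\to\calB(-1)\to\calS_Q(I)\to\calB(-1)\to0$, and conversely such a sequence forces $\ell_R(K^{n+1}/K)=\ell_R([\calS_Q(I)]_n)=2$ for all $n\ge1$, hence $K^2=K^3$ and $\ell_R(K^2/K)=2$, i.e. $(3)$. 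Non‑splitness of the sequence is equivalent to $\m{\cdot}\calS_Q(I)\ne(0)$, i.e. to $\m(S/K)\ne(0)$: if $\m S\subseteq K$ then $x\m\subseteq K$ for every $x\in S$, so $K:\m=S$, contradicting $\ell_R\bigl(S/(K:\m)\bigr)=1$. Finally, from the filtration $Q^{n+1}\subseteq Q^nI\subseteq\cdots\subseteq I^{n+1}$ one gets $\ell_R(I^{n+1}/Q^{n+1})=\ell_R(I/Q)+\ell_R(K^{n+1}/K)=\bigl(\rme_0(I)-\ell_R(R/I)\bigr)+2$ for $n\ge1$, and inserting this into $\ell_R(R/I^{n+1})=\ell_R(R/Q^{n+1})-\ell_R(I^{n+1}/Q^{n+1})=\rme_0(I)\binom{n+1}{1}-\ell_R(I^{n+1}/Q^{n+1})$ yields the stated formula.
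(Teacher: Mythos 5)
The paper does not give its own proof of this theorem: it is quoted verbatim as \cite[Theorem~1.4]{CGKM}, so there is no argument in the present text to compare your route against. What I can do is assess your proof on its own merits, and it holds up.

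Your translation to the $K$-side is the standard one and is carried out correctly: $[\calS_Q(I)]_n\cong K^{n+1}/K$ with $at$ becoming the inclusion-induced map, the tail stabilizes at $S/K$, and the localization argument giving $\rank\calS_Q(I)=\ell_R(S/K)$ is sound (the head $\bigoplus_{n<n_0}[\calS_Q(I)]_n$ is finite $\calT$-length, hence vanishes at the non-maximal prime $\fkp=\m\calT$, while $\calT_\fkp$ is $R$-flat with residue field $(R/\m)(X)$, so length is preserved on the tail). The duality steps are fine: $\fkc=K\colon S$ because $K\colon S\subseteq K\colon K=R$ and $K\colon S$ is an $S$-submodule of $R$, and applying $\Hom_R(-,K)$ to the short exact sequences with $R/\fkc$ and $(K\colon\m)/K$, using $K\colon\fkc=S$ and $K\colon(K\colon\m)=\m$, gives $(6)\Leftrightarrow(7)$ and $(5)\Leftrightarrow(6)$ as you state. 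The identification $\calS_Q(I)\cong(S/K)\otimes_R\calT(-1)$ once $(3)$ holds, the passage to $(2)$ by tensoring a composition series, the converse by counting lengths of graded pieces, the non-splitness criterion via $\m(S/K)\ne(0)\Leftrightarrow K\colon\m\ne S$, and the Hilbert-function computation via $\ell_R(I^{n+1}/Q^{n+1})=\ell_R(I/Q)+\ell_R(K^{n+1}/K)$ are all correct.

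The one place worth tightening is your step $(6)\Rightarrow(3)$, where you rule out $\ell_R(K^2/K)=1$. The conclusion is right, but the citation is a bit loose: what is actually needed is that $\ell_R(K^2/K)=1$ forces $\m K^2\subseteq K$ (since a length-one module is $R/\m$), hence $\m K\subseteq K\colon K=R$, hence $R$ is an almost Gorenstein ring by the GMP characterization (e.g.\ \cite[Theorem~3.11]{GMP}, not the rank statement in Theorem~3.16), and almost Gorenstein rings satisfy $I^3=QI^2$. This yields $K^2=K^3$, so $K^2=S$ and $\ell_R(S/K)=1$, contradicting $(6)$. With the reference adjusted, the argument is complete and appears to be independently correct.
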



As is noted above, the notion of $2$-AGL ring could be considered to be one of the successors of the notion of AGL ring. However, if $2$-AGL rings claim that they are orthodox successors of AGL rings, it must be proved, showing that they really inherit several distinctive properties which AGL rings usually keep. In the present article, to certify the orthodoxy of $2$-AGL rings for the further studies, 
we investigate three topics on $2$-AGL rings, which are closely studied already for the case of AGL rings. The first topic concerns minimal presentations of canonical ideals. In Section 2, we will give a necessary and sufficient condition for a given one-dimensional Cohen-Macaulay local ring $R$ to be a $2$-AGL ring, in terms of minimal presentations of canonical fractional ideals. Our results Theorems \ref{3.2} and \ref{3.4a}  exactly correspond to those about AGL rings given by \cite[Theorem 7.8]{GTT}.

In Section 3, we investigate a generalization of so called {\it amalgamated duplications} of $R$ (\cite{marco}), including certain fiber products, and prove that $R$ is a $2$-$\AGL$ ring if and only if so is the fiber product $R \times_{R/\fkc} R$. By \cite[Theorem 4.2]{CGKM} $R$ is a $2$-$\AGL$ ring if and only if so is the trivial extension $R \ltimes \fkc$ of $\fkc$ over $R$, which corresponds to \cite[Theotem 6.5]{GMP} for the case of AGL rings.

In Sections 4 and 5, we are interested in Ulrich ideals in $2$-AGL rings. The existence of two-generated Ulrich ideals is basically a substantially strong condition for $R$, which we closely discuss in Section 4, especially in the case where $R$ is a $2$-AGL ring. Here, we should not rush, but should explain about what are Ulrich ideals. The  notion of Ulrich ideal/module dates back to the work \cite{GOTWY} in 2014, where the authors introduced the notion, generalizing that of MGMCM modules (maximally generated maximal Cohen-Macaulay modules)  (\cite{BHU}), and started the basic theory. The maximal ideal of a Cohen-Macaulay local ring with minimal multiplicity is a typical example of Ulrich ideals, and the higher syzygy modules of Ulrich ideals are Ulrich modules. In \cite{GOTWY, GOTWY2}, all the Ulrich ideals of Gorenstein local rings of finite CM-representation type and of dimension at most $2$ are determined, by means of the classification in the representation theory. On the other hand, in \cite{GTT2}, the first author, R. Takahashi, and the third author studied the structure of the complex $\RHom_R(R/I, R)$ for  Ulrich ideals $I$ in a Cohen-Macaulay local ring $R$ of arbitrary dimension, and proved that in a one-dimensional non-Gorenstein AGL ring $(R,\m)$, the only possible Ulrich ideal is the maximal ideal $\m$ (\cite[Theorem 2.14 (1)]{GTT2}). In Section 5, we study the natural question of how and what happens about $2$-AGL rings. To state our conclusion, let $\calX_R$ denote the set of Ulrich ideals in $R$. We then have the following, which we will prove in Section 5. The assertion exactly corresponds to \cite[Theorem 2.14 (1)]{GTT2}, the result of the case where $R$ is an AGL ring of dimension one.

\begin{thm}[Corollary \ref{4.3}]
Suppose that $(R,\m)$ is a $2$-${\rm AGL}$ ring with minimal multiplicity, possessing a canonical fractional ideal $K$. Then 
$$
\calX_R= \begin{cases}
\{\fkc, \m\}, &  \ \text{if} \ K/R~\text{is}~R/\fkc\text{-free},\\
\{\m\}, & \ \text{otherwise}.
\end{cases}
$$
\end{thm}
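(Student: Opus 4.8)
The plan is to determine $\calX_R$ by combining three ingredients: the general structural result of \cite{GTT2} that constrains Ulrich ideals in one-dimensional Cohen–Macaulay local rings via the complex $\RHom_R(R/I,R)$, the characterization Theorem \ref{mainref} of $2$-$\AGL$ rings (especially the equivalent conditions $\ell_R(R/\fkc)=2$ and $\ell_R(S/K)=2$), and the minimal multiplicity hypothesis which forces $\m$ to be an Ulrich ideal and pins down the possible candidates. First I would show $\m\in\calX_R$: since $R$ has minimal multiplicity, $\m^2=a\m$ for a minimal reduction $(a)$, and the standard argument (as in \cite{GOTWY}) shows $\m$ is an Ulrich ideal, so $\m$ always belongs to $\calX_R$. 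So the real content is to identify which \emph{other} ideals can be Ulrich.

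Next I would bring in the fact that for an Ulrich ideal $I$ with $\mu_R(I)=n$, the ring $R/I$ has finite projective dimension over $R/I$-type constraints are governed by $\RHom_R(R/I,R)$; more concretely, from \cite{GTT2} one knows that an Ulrich ideal $I\ne\m$ in a non-Gorenstein one-dimensional Cohen–Macaulay local ring must be two-generated and, via the exact sequences relating $I$, $K$, and $S$, must satisfy strong length conditions. I would argue that any Ulrich ideal $I$ with $I\neq\m$ must equal $\fkc$: using $\ell_R(R/\fkc)=2$ (Theorem \ref{mainref}(7)) together with the fact that an Ulrich ideal $I\neq\m$ satisfies $I\subseteq\m^2$-adjacent constraints and $\ell_R(R/I)\le 2$ forced by the minimal multiplicity and the rank computation for $\calS_Q(I)$, I would narrow the candidate down to $\fkc$ itself. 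The key intermediate claim is: $\fkc$ is an Ulrich ideal if and only if $\fkc$ is two-generated and $\fkc^2=a\fkc$ for a parameter $a$, and this happens precisely when $K/R$ is free as an $R/\fkc$-module. This last equivalence is where the hypothesis on $K/R$ enters: since $\fkc=R:S$ and $S=R[K]$, freeness of $K/R$ over $R/\fkc$ translates (via $\ell_R(S/K)=2$ and $\ell_R(R/\fkc)=2$, so $R/\fkc\cong k\oplus k$ or $k[t]/(t^2)$ as needed) into the structure of $\fkc$ that makes it Ulrich.

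Concretely, the steps in order are: (i) establish $\m\in\calX_R$ from minimal multiplicity; (ii) show any $I\in\calX_R$ with $I\ne\m$ is two-generated, invoking \cite[Theorem 2.14]{GTT2} or its proof method adapted to the $2$-$\AGL$ setting; (iii) using $\Ann_R(R/I)=I$ and the length bound $\ell_R(R/I)\le\ell_R(R/\fkc)=2$ together with $\fkc\subseteq I$ (which should follow because $I$ annihilates a module detecting $S/R$), conclude $I=\fkc$; (iv) prove $\fkc\in\calX_R$ $\iff$ $K/R$ is $R/\fkc$-free, by direct computation: $\fkc$ is Ulrich iff $\m\fkc=a\fkc$ and $\mu_R(\fkc)=2$, and unwinding $\fkc\cdot K\subseteq R$, $K\cdot K=S$ modulo $\fkc$ gives that $K/R$ being free is exactly the condition making $\fkc/a\fkc\cong(R/\fkc)^{\oplus 2}$. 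I expect step (iv), the equivalence "$\fkc$ Ulrich $\iff$ $K/R$ is $R/\fkc$-free," to be the main obstacle: it requires a careful analysis of how multiplication by elements of $K$ acts on $K/R$ and on $\fkc$, translating the module-theoretic freeness into the multiplicative identity $\m\fkc=a\fkc$; the direction "$K/R$ free $\Rightarrow$ $\fkc$ Ulrich" should be the more delicate one, since the converse will likely follow from counting lengths using $\ell_R(S/K)=\ell_R(R/\fkc)=2$. Throughout, I would use that $S=R:\fkc$ (double annihilator, valid since $R$ is Cohen–Macaulay and $\fkc$ is $\m$-primary here, as $\ell_R(R/\fkc)=2<\infty$) to move freely between statements about $\fkc$ and statements about $S$, $K$.
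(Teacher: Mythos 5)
Your high-level outline (establish $\m\in\calX_R$; pin down the only other candidate to $\fkc$; then decide whether $\fkc$ is Ulrich via freeness of $K/R$) matches the paper's strategy, but step (ii) is backwards and this breaks the mechanism that gets you to $\fkc\subseteq I$. You claim that any Ulrich ideal $I\ne\m$ must be \emph{two}-generated, citing \cite[Theorem 2.14]{GTT2}. The paper proves exactly the opposite: under the present hypotheses \emph{every} Ulrich ideal satisfies $\mu_R(I)\ge 3$. When $K/R$ is $R/\fkc$-free, minimal multiplicity forces $S=R[K]$ to be Gorenstein (\cite[Corollary~5.3]{CGKM}), and then Corollary~\ref{1.7} shows there are no two-generated Ulrich ideals; when $K/R$ is not $R/\fkc$-free, Theorem~\ref{2.3} shows a two-generated Ulrich ideal would force $K/R$ to be $R/\fkc$-free, a contradiction. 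The inequality $\mu_R(I)\ge 3 = d+2$ is precisely what lets one invoke Proposition~\ref{Ulrich}(2) to conclude $\fkc=\Ann_R(K/R)\subseteq I$, whence $I\in\{\fkc,\m\}$ by $\ell_R(R/\fkc)=2$. If you instead had $\mu_R(I)=2$, that implication is unavailable (the hypothesis $n\ge d+2$ fails), and in fact Proposition~\ref{2.2} shows a two-generated Ulrich ideal is totally reflexive of infinite projective dimension, which is incompatible with $R/\m$ being a direct summand of $K/R$ (Theorem~\ref{1.5}), again the opposite of what you want.

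Step (iv) as you sketch it is also too optimistic: the equivalence \lq\lq$\fkc\in\calX_R$ iff $K/R$ is $R/\fkc$-free\rq\rq\ (Proposition~\ref{1.6}) requires in addition that $S$ be a Gorenstein ring, and the Ulrich criterion for $\fkc$ is not that $\fkc/a\fkc\cong(R/\fkc)^{\oplus 2}$ (indeed $\mu_R(\fkc)$ is generally larger than $2$ here, e.g.\ $\mu_R(\fkc)\le n$ from Proposition~\ref{2.3a}(2)); one instead uses $\fkc\cong\Hom_R(S,K)\cong S$ when $S$ is Gorenstein to get the stable reduction $\fkc^2=f\fkc$, and freeness of $K/R$ to get freeness of $\fkc/\fkc^2$. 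The Gorenstein hypothesis on $S$ is supplied by minimal multiplicity via \cite[Corollary~5.3]{CGKM}, which your proposal never invokes. So the gap is concrete: reverse the direction of the $\mu_R(I)$ constraint (from $=2$ to $\ge 3$), and route the $\fkc$-Ulrich equivalence through Gorensteinness of $S$ together with Proposition~\ref{1.6}, rather than attempting a bare-hands freeness-to-reduction argument.
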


For one-dimensional Gorenstein local rings $R$ of finite CM-representation type, the list of Ulrich ideals is known by \cite{GOTWY}. The proof given by \cite{GOTWY} is based on the techniques in the representation theory of maximal Cohen-Macaulay modules. It might have some interests to give a straightforward proof, making use of the results in \cite[Section 12]{GTT} from a different point of view. In Section 6 we shall perform it as an appendix.

In what follows, unless otherwise specified, let $R$ be a one-dimensional Cohen-Macaulay local ring with maximal ideal $\fkm$. For each finitely generated $R$-module $M$, let $\mu_R(M)$ (resp. $\ell_R(M)$) denote the number of elements in a minimal system of generators of $M$ (resp. the length of $M$). We denote by $\mathrm{K}_R$ the canonical module of $R$.


\section{Minimal presentations of canonical ideals in $2$-$\AGL$ rings}

In this section, we explore the structure of minimal presentations of canonical ideals of $2$-${\rm AGL}$ rings. Before going ahead, we summarize some known results on $2$-AGL rings, which we shall often refer to throughout this paper. Let $(R,\m)$ be a Cohen-Macaulay local ring with $\dim R = 1$, admitting the canonical module $\rmK_R$. We assume that $R$ possesses a canonical fractional ideal $K$, that is an $R$-submodule of $\rmQ(R)$ such that $R \subseteq K \subseteq \overline{R}$, where $\overline{R}$ denotes the integral closure of $R$ in $\rmQ(R)$, and $K \cong \rmK_R$ as an $R$-module. Let $S =R[K]$ and set $\fkc = R : S$. We denote by $\rmr(R) =\ell_R(\Ext_R^1(R/\m, R))$ the Cohen-Macaulay type of $R$.

\begin{prop}[{\cite[Proposition 3.3]{CGKM}}]\label{2.3a}
Suppose that $R$ is a $2$-${\rm AGL}$ ring with $r = \rmr(R)$. Then the following assertions hold true.
\begin{enumerate}[${\rm (1)}$]
\item[$(1)$] $\fkc = K:S = R:K$.
\item[${\rm (2)}$] There is a minimal system $x_1, x_2, \ldots, x_n$ of generators of $\m$ such that $\fkc =(x_1^2) + (x_2, x_3, \ldots, x_n)$.
\item[$(3)$] $S/K \cong R/\fkc$ and $S/R \cong K/R \oplus R/\fkc$ as $R/\fkc$-modules.
\item[$(4)$] $K/R\cong (R/\fkc)^{\oplus\ell} \oplus (R/\m)^{\oplus m}$ as an $R/\fkc$-module for some $\ell>0$, $m \ge 0$ such that $\ell + m = r-1$. 
\item[$(5)$] $\mu_R(S) = r+1$.
\end{enumerate}
\end{prop}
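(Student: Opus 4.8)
The plan is to derive Proposition \ref{2.3a} from the structural characterization of $2$-$\AGL$ rings in Theorem \ref{mainref}, exploiting in particular the numerical equality $\ell_R(R/\fkc) = 2$ together with the inclusions $R \subseteq K \subseteq S \subseteq \overline{R}$ and the canonical-module duality between $R$ and $K$. The key observation driving everything is that $\fkc = R:S$ is an ideal of colength $2$ in $R$, and also an $S$-ideal, so $R/\fkc \hookrightarrow S/\fkc$ with $\ell_R(S/\fkc) = \ell_R(S/R) + 2 = 4$ (using $\ell_R(S/R) = \ell_R(S/K) + \ell_R(K/R) = 2 + (r-1)$; wait, more carefully $\ell_R(S/R) = \ell_R(S/K)+\ell_R(K/R)$ and I will track these lengths precisely as they are what forces the module structures below).

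For part (1), I would start from the general fact that for a canonical fractional ideal $K$ one has $R:K = \fkc$ whenever $R$ is not Gorenstein — actually the cleaner route is: $K:S$ is clearly contained in $K:K = R$ (since $R=\End_R(K)$ as $K\cong\rmK_R$), and it is an $S$-submodule of $R$, hence $K:S \subseteq \fkc$; conversely $\fkc = R:S \subseteq K:S$ is immediate since $R\subseteq K$. For the other equality $\fkc = R:K$: since $K^2=K^3$ by Theorem \ref{mainref}(3), $S = R[K] = K^2$ is already a ring (one checks $K^2\cdot K^2 = K^4 = K^2$), so $R:S = R:K^2 = (R:K):K$; combined with $K\supseteq R$ and a length count using $\ell_R(R/\fkc)=2 = \ell_R(K^2/K)$ one gets $R:K = R:S$. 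I expect the main subtlety to be pinning down $R:K = \fkc$ rather than merely $R:K \supseteq \fkc$, and the length identities from Theorem \ref{mainref} are exactly what close that gap.

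For parts (2)–(4), the engine is that $\fkc$ is an ideal with $\ell_R(R/\fkc)=2$, so $\fkm\fkc \subseteq$ the unique ideal strictly between, forcing $\fkm^2 \subseteq \fkc \subseteq \fkm$ with $\dim_{R/\fkm}(\fkm/\fkc) = 1$; picking $x_1$ spanning $\fkm/\fkc$ and extending to a minimal generating set $x_1,\dots,x_n$ of $\fkm$ gives $\fkc = (x_1^2, x_2,\dots,x_n)$ after adjusting $x_1$ modulo the other generators — this is a standard ``colength two'' normal-form argument, with the only care being to check $x_1^2 \notin (x_2,\dots,x_n)$, which holds because $\fkc$ has colength $2$ not $1$ and $R$ is not Gorenstein. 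For (3): $S/K$ is a module over $S/\fkc$ annihilated by $\fkm$ (since $\fkm K \subseteq K$... rather since $\ell_R(S/K)=2$ forces $\fkm(S/K)\subsetneq S/K$, and in fact $=0$ by a type computation), of length... hmm, $\ell_R(S/K)=2$ so it is not $R/\fkc$ unless — actually I'd show $S/K \cong R/\fkc$ by exhibiting a surjection $R/\fkc \twoheadrightarrow S/K$ (from $R\hookrightarrow S$) that is an isomorphism by equality of lengths $\ell_R(R/\fkc)=2=\ell_R(S/K)$. The splitting $S/R \cong K/R \oplus R/\fkc$ then follows from the short exact sequence $0\to K/R \to S/R \to S/K \to 0$ together with $S/K\cong R/\fkc$ being $R/\fkc$-free of rank $1$, hence projective over $R/\fkc$, so the sequence of $R/\fkc$-modules splits (here one needs $\fkc(S/R)=0$, i.e. $\fkc S\subseteq R$, which is exactly $\fkc\subseteq R:S$). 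For (4), $K/R$ is an $R/\fkc$-module (since $\fkc K = (R:K)K \subseteq R$), of length $r-1$ since $\mu_R(K) = r$ and... one has $\ell_R(K/R)$ relating to $r$; then $K/R$ is a module over the length-$2$ local ring $R/\fkc$, and every such module decomposes as a direct sum of copies of $R/\fkc$ and $R/\fkm$ because $R/\fkc$ is a principal ideal ring modulo nilpotents of the simplest kind — more precisely $\fkm/\fkc$ is principal so $R/\fkc$ is a quotient of a DVR-like ring and its modules are direct sums of $R/\fkc$ and $R/\fkm$; the constraint $\ell>0$ comes from $K/R$ not being killed by $\fkm$ (else $S = R[K]$ would be too small, contradicting $\ell_R(S/K)=2$ — one uses $\fkm\cdot\calS_Q(I)\ne 0$ from Theorem \ref{mainref}). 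Finally (5): $\mu_R(S) = \mu_R(S/\fkm S)$, and from $S/R \cong K/R \oplus R/\fkc$ one reads $\mu_R(S) \le \mu_R(K) + 1 = r+1$ with equality forced by the fact that the $R/\fkc$-summand $R/\fkc$ contributes exactly one new generator (its generator $1_S$) not in $\fkm K$. The main obstacle throughout is bookkeeping the lengths and minimal numbers of generators consistently — the module-theoretic structure is ``morally obvious'' once $\ell_R(R/\fkc)=2$ is in hand, but making $\ell>0$, the freeness in (3), and the exact count in (5) airtight requires carefully chaining the equalities in Theorem \ref{mainref}(3)(5)(6)(7) and the non-split statement at the end of that theorem.
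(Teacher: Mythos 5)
The paper does not prove this proposition (it cites \cite[Proposition 3.3]{CGKM}), so I am evaluating your argument on its own. Your overall plan is sound, and several pieces go through: the first equality $\fkc = K:S$ in (1), the colength-two normal form in (2), the splitting of $0\to K/R\to S/R\to S/K\to 0$ once $S/K\cong R/\fkc$ is known, and the classification of finitely generated $R/\fkc$-modules as sums of $R/\fkc$ and $R/\fkm$ (because $\fkm/\fkc$ is principal, being simple). But there is a genuine gap in (3). You propose to obtain $S/K\cong R/\fkc$ from ``a surjection $R/\fkc\twoheadrightarrow S/K$ coming from $R\hookrightarrow S$.'' That map is zero: the composite $R\hookrightarrow S\twoheadrightarrow S/K$ kills all of $R$ because $R\subseteq K$. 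Length-counting cannot rescue a map that does not exist. The correct mechanism is canonical duality: apply $\Hom_R(-,K)$ to $0\to K\to S\to S/K\to 0$ and use $\Hom_R(K,K)=R$, $\Hom_R(S,K)=K:S=\fkc$, $\Hom_R(S/K,K)=0$ (torsion into torsion-free), and $\Ext_R^1(S,K)=0$ ($S$ is MCM) to get $0\to\fkc\to R\to\Ext_R^1(S/K,K)\to 0$. By local duality $\Ext_R^1(S/K,K)$ is the Matlis dual of $S/K$, so $(S/K)^\vee\cong R/\fkc$, hence $S/K\cong\rmK_{R/\fkc}$, and this equals $R/\fkc$ exactly because the length-two Artinian local ring $R/\fkc$ is Gorenstein. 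That last fact is the real reason the isomorphism holds; your argument never invokes it.

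Two smaller gaps. In (1), the step ``$R:S=(R:K):K$, then a length count gives $R:K=R:S$'' leaves the inclusion $R:K\subseteq (R:K):K$ unestablished (from $zK\subseteq R$ one does not obviously get $zK^2\subseteq R$); the clean route is $R:K=(K:K):K=K:K^2=K:S=\fkc$, using only $K:K=R$ and $S=K^2$. In (4), you use $\ell+m=\mu_R(K/R)=r-1$ without deriving it; one must check $1\notin\fkm K$ (equivalently $\fkm K\cap R=\fkm$) so that the map $k=R/\fkm\to K/\fkm K$ is injective and $\mu_R(K/R)=r-1$. Similarly in (5), the phrase about ``the generator $1_S$'' is misleading since $1_S\in R$ dies in $S/R$; what one actually needs is $1\notin\fkm S$ (else $\fkm S=S$, contradicting Nakayama over the semilocal ring $S$), giving an injection $R/\fkm\hookrightarrow S/\fkm S$ and hence $\mu_R(S)=1+\mu_R(S/R)=1+r$.
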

\noindent
Therefore, if $R$ is a $2$-AGL ring, then $\ell_R(K/R)=2\ell + m$. Hence, $K/R$ is a free $R/\fkc$-module if and only if $\ell_R(K/R) = 2(r-1)$.

\medskip

Let us now fix the setting of this section. In what follows, we assume that $R=T/\fka$, $\m = \n/\fka$, for some regular local ring  $(T, \n)$  with $\dim T= n \ge 3$ and  an ideal $\fka$ of $T$ such that $\fka \subseteq \n^2$. Suppose that $R$ is not a Gorenstein ring. For each $a \in T$, let $\overline{a}$ denote the image of $a$ in $R$.

Firstly, suppose that $R$ is a $2$-AGL ring, and  write $\fkc = (x_1^2) + (x_2, x_3, \ldots, x_n)$ with a minimal system $x_1, x_2, \ldots, x_n$ of generators of $\m$ (see Proposition \ref{2.3a} (2)). We choose $X_i \in \n$ so that $x_i=\overline{X_i}$ in $R$, whence $\n = (X_1, X_2, \ldots, X_n)$. Let $J = (X_1^2)+(X_2, X_3, \ldots, X_n)$. We then have
$
T/J \cong R/\fkc
$, since $\ell_T(T/J) = \ell_R(R/\fkc)=2$, so that $\fka \subseteq J$ and $\fkc = J/\fka$. On the other hand, by Proposition \ref{2.3a} (4) we have $$K/R \cong (R/\fkc)^{\oplus \ell} \oplus (R/\m)^{\oplus m}$$ with $\ell >0, m \ge 0$ such that $\ell + m = \rmr(R)-1$. Hence, letting $K= R + \sum_{i=1}^\ell Rf_i + \sum_{j=1}^m R g_j$ with $f_i, g_j \in K$, we may assume that 
$$
\sum_{i=1}^{\ell}(R/\fkc){\cdot} \overline{f_i} \cong (R/\fkc)^{\oplus \ell}\ \ \text{and} \ \  \sum_{j=1}^m (R/\fkc){\cdot}\overline{g_j} \cong (R/\m)^{\oplus m},
$$
where $\overline{f_i}, \overline{g_j}$ denote the images of $f_i, g_j$ in $K/R$.   With this notation, we have the following, which corresponds to \cite[Theorem 7.8]{GTT} for AGL rings.

\begin{thm}\label{3.2}
The $T$-module $K$ has a minimal free presentation of the form
$$
F_1 \overset{\Bbb M}{\longrightarrow} F_0 \overset{\Bbb N}{\longrightarrow} K \to 0,
$$
where the matrices $\Bbb N$ and $\Bbb M$ are given by 
$$\Bbb N= [
\begin{smallmatrix}
-1 & f_1 f_2 \cdots f_{\ell} & g_1 g_2 \cdots g_m
\end{smallmatrix}]
$$
and $$\Bbb M =\left[ 
\begin{smallmatrix}
a_{11} a_{12} \cdots a_{1n} & \cdots & a_{\ell1} a_{\ell2} \cdots a_{\ell n} & b_{11} b_{12} \cdots b_{1n} & \cdots & b_{m1} b_{m2} \cdots b_{mn} & c_1 c_2 \cdots c_q \\
X^2_1 X_2 \cdots X_n & 0 & 0 & 0  & 0 & 0  & 0 \\
0 & \ddots & 0 & 0  & 0 & 0  & 0 \\
\vdots & \vdots & X^2_1 X_2 \cdots X_n & \vdots & \vdots & \vdots & \vdots \\
0 & 0 & 0 & X_1 X_2 \cdots X_n & 0 & 0 & 0\\
0 & 0 & 0 & 0  & \ddots & 0 & 0 \\
0 & 0 & 0 & 0  & 0 & X_1 X_2 \cdots X_n & 0 
\end{smallmatrix}\right]
$$
\vspace{0.5em}

\noindent
with $a_{ij} \in J$ $($$1 \le i \le \ell$, $1 \le j \le n$$)$, $b_{ik} \in J$ $($$1 \le i \le m$, $2 \le k \le n$$)$, and $q \ge 0$. The matrix $\Bbb M$ involves the information on a system of generators of $\fka$, and we have    
$$
\fka = \sum_{i=1}^{\ell} {\rm I}_2
\left(\begin{smallmatrix}
a_{i1} & a_{i2} & \cdots & a_{in} \\
X^2_1 & X_2 & \cdots & X_n
\end{smallmatrix}\right) + 
\sum_{i=1}^m {\rm I}_2
\left(\begin{smallmatrix}
b_{i1} & b_{i2} & \cdots & b_{in} \\
X_1 & X_2 & \cdots & X_n
\end{smallmatrix}\right) + (c_1, c_2, \ldots, c_q), 
$$ where $\rmI_2(\Bbb L)$ denotes, for a $2 \times n$ matrix $\Bbb L$ with entries in $T$, the ideal of $T$ generated by $2 \times 2$ minors of $\Bbb L$.
\end{thm}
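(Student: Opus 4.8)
The approach is to build the presentation directly from the inclusion $R\subseteq K$ and the decomposition of $K/R$ in Proposition~\ref{2.3a}(4), paralleling the argument of \cite[Theorem 7.8]{GTT}. First I would produce $\Bbb N$. Since $K=R+\sum_{i=1}^{\ell}Rf_i+\sum_{j=1}^{m}Rg_j$ and $-1$ generates $R$, the $1+\ell+m$ elements $-1,f_1,\dots,f_{\ell},g_1,\dots,g_m$ generate $K$ over $R$, hence over $T$; as $\mu_T(K)=\mu_R(K)=\mu_R(\rmK_R)=\rmr(R)=1+\ell+m$ (the last equality by Proposition~\ref{2.3a}(4)), this is a \emph{minimal} generating system. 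Thus $F_0=T^{1+\ell+m}$, the matrix $\Bbb N$ has the stated form, and $L:=\Ker\Bbb N\subseteq\n F_0$. Note also that a length count ($\ell_R(K/R)=2\ell+m$) forces the sum $K/R=\sum_i(R/\fkc)\overline{f_i}+\sum_j(R/\fkc)\overline{g_j}$ to be \emph{direct}, equal to $\bigoplus_i(R/\fkc)\overline{f_i}\oplus\bigoplus_j(R/\m)\overline{g_j}$.

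Next I would read off the ``structural'' columns of $\Bbb M$. Since $\fkc=R:S$ is an ideal of $S=R[K]$ and $K\subseteq S$, one has $\fkc K\subseteq\fkc$; combined with $\fkc\,\overline{f_i}=0$ in $K/R$ this gives $\fkc f_i\subseteq\fkc=J/\fka$, so each of $X_1^2f_i,X_2f_i,\dots,X_nf_i$ lies in $\fkc$ and may be lifted to some $a_{ij}\in J$. Likewise $\m\,\overline{g_j}=0$ gives $X_kg_j\in R$ for all $k$, while $x_k\in\fkc$ for $k\ge 2$ forces $X_kg_j\in\fkc$ there, so one may lift to $b_{jk}\in J$ for $k\ge 2$ and $b_{j1}\in T$. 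Writing $X_1'=X_1^2$ and $X_k'=X_k$ $(k\ge 2)$, this produces $(\ell+m)n$ elements of $L$, namely the relations $X_j'f_i=\overline{a_{ij}}$ and $X_kg_j=\overline{b_{jk}}$, which are exactly the columns of $\Bbb M$ other than the last $q$. Their $2\times 2$ Koszul combinations, e.g.\ $X_k'\cdot(X_j'f_i-\overline{a_{ij}})-X_j'\cdot(X_k'f_i-\overline{a_{ik}})$, lie in $L\cap T{\cdot}(-1)$, proving $\sum_i\rmI_2\left(\begin{smallmatrix}a_{i1}&\cdots&a_{in}\\ X_1^2&\cdots&X_n\end{smallmatrix}\right)+\sum_j\rmI_2\left(\begin{smallmatrix}b_{j1}&\cdots&b_{jn}\\ X_1&\cdots&X_n\end{smallmatrix}\right)\subseteq\fka$.

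Then I would show that $L$ is generated by these structural relations together with $\fka{\cdot}(-1)$, and deduce minimality and the shape of $\fka$. Given $z\in L$, push it into $K/R=\bigoplus_i(R/\fkc)\overline{f_i}\oplus\bigoplus_j(R/\m)\overline{g_j}$: because the sum is direct, the $f_i$-coefficient of $z$ lies in $J$ and the $g_j$-coefficient in $\n$; expanding these coefficients in $X_1^2,X_2,\dots,X_n$ resp.\ $X_1,\dots,X_n$ and subtracting the matching combinations of the structural relations, what remains is an element of $L$ supported on $(-1)$, i.e.\ of $\fka{\cdot}(-1)$. For minimality, the composite $F_0\twoheadrightarrow F_0/T{\cdot}(-1)\cong T^{\ell+m}\twoheadrightarrow K/R$ is a minimal presentation of $(R/\fkc)^{\oplus\ell}\oplus(R/\m)^{\oplus m}$, so $\Syz_1^T(K/R)\cong J^{\oplus\ell}\oplus\n^{\oplus m}$ (here $\mu_T(J)=n$ since $X_1^2,X_2,\dots,X_n$ is a $T$-regular sequence); restriction gives a surjection $L\twoheadrightarrow\Syz_1^T(K/R)$ with kernel $L\cap T{\cdot}(-1)=\fka{\cdot}(-1)$, under which the structural relations go to the standard minimal generators of $J^{\oplus\ell}\oplus\n^{\oplus m}$. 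Hence those $(\ell+m)n$ relations are $k$-linearly independent in $L/\n L$, so they extend to a minimal generating system of $L$, and by the generation statement the new generators can be chosen as $c_1{\cdot}(-1),\dots,c_q{\cdot}(-1)$ with $c_s\in\fka$; this is precisely the column set of $\Bbb M$, so $F_1\xrightarrow{\Bbb M}F_0\xrightarrow{\Bbb N}K\to 0$ is a minimal free presentation. For the displayed equation, ``$\supseteq$'' is the previous paragraph; ``$\subseteq$'' follows by writing $c{\cdot}(-1)$ $(c\in\fka)$ through the minimal generators of $L$ and using that $X_1^2,X_2,\dots,X_n$ and $X_1,\dots,X_n$ are $T$-regular sequences (so their syzygies are Koszul), whence the $f_i$- and $g_j$-components of the expression push $c$ into $\sum_i\rmI_2(\cdots)+\sum_j\rmI_2(\cdots)+(c_1,\dots,c_q)$.

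The step I expect to be the main obstacle is this last minimality bookkeeping: pinning down the number $q$ of ``extra'' columns and verifying that all $(\ell+m)n$ structural columns genuinely belong to a minimal generating system of $L$. It hinges on the two structural inputs --- the direct-sum decomposition of $K/R$ from Proposition~\ref{2.3a}(4), and the fact that $X_1^2,X_2,\dots,X_n$ is a regular sequence (which makes $\mu_T(J)=n$ and yields Koszul syzygies for both $J$ and $\n$) --- while everything else is routine computation.
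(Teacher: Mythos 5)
Your proof is correct, and it takes a genuinely different route from the paper's. The paper starts from an abstract minimal $T$-free resolution $F_1\xrightarrow{\Bbb A}F_0\to K\to 0$, truncates to get a (possibly non-minimal) presentation of $K/R$, compares it with the known minimal presentation of $(T/J)^{\oplus\ell}\oplus(T/\n)^{\oplus m}$ via a commutative-diagram lifting argument (using that a lift of the identity $\eta\circ\xi:G_1\to G_1$ over a minimal presentation is an isomorphism), and then applies an invertible column operation $\Bbb M=\Bbb A Q$ to force the matrix into the stated shape; the ideal equation is outsourced to the proof of \cite[Theorem 7.8]{GTT}. You instead construct the structural columns directly from the relations $X_j'f_i=\overline{a_{ij}}$, $X_kg_j=\overline{b_{jk}}$, prove generation of $L=\Ker\Bbb N$ by pushing an arbitrary syzygy into $K/R$ and using directness, establish minimality via the (snake-lemma) sequence $0\to\fka\to L\to\Syz_1^T(K/R)\to 0$ and the fact that $X_1^2,X_2,\dots,X_n$ is a regular sequence (so $\mu_T(J)=n$), and then derive both inclusions of the ideal equation from Koszul syzygies of the two regular sequences. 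The two proofs ultimately rest on the same two inputs — the decomposition $K/R\cong(R/\fkc)^{\oplus\ell}\oplus(R/\m)^{\oplus m}$ and the regular-sequence property — but yours is more explicit and self-contained, in particular proving the generating-set formula for $\fka$ in full rather than referring to \cite{GTT}, at the cost of slightly heavier bookkeeping in the minimality step; the paper's column-operation argument is shorter once the minimal-presentation comparison is set up. One small point you leave implicit, though it follows automatically once minimality is established: the entries $b_{j1}$ (lifts of $x_1g_j$) lie in $\n$ — they must, since the presentation is minimal so $L\subseteq\n F_0$, but it can also be seen directly because $g_j\in\overline R\setminus R$ and $x_1$ lies in the Jacobson radical of $\overline R$, so $x_1g_j$ cannot be a unit.
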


\begin{proof}
Let 
$$
F_1 \overset{\Bbb A}{\longrightarrow} F_0 \overset{[
\begin{smallmatrix}
-1 & f_1 f_2 \cdots f_{\ell} & g_1 g_2 \cdots g_m
\end{smallmatrix}]}{\longrightarrow} K \longrightarrow 0
$$
be a part of a minimal $T$-free resolution of $K$ with $F_0 = T \oplus T^{\oplus \ell} \oplus T^{\oplus m}$, which gives rise to a presentation  
$$
F_1 \overset{\Bbb A'}{\longrightarrow} G_0 \overset{\Bbb N'}{\longrightarrow} K/R \longrightarrow 0
$$
of $K/R$, where $\Bbb N'=[
\begin{smallmatrix}
\bar{f_1} \bar{f_2} \cdots \bar{f_{\ell}} & \bar{g_1} \bar{g_2} \cdots \bar{g_m}
\end{smallmatrix}]$, and $\Bbb A'$ is the $(\ell + m) \times s$ matrix obtained from $\Bbb A$ by deleting the first row. On the other hand, since $K/R \cong (T/J)^{\oplus \ell}\oplus (T/\n)^{\oplus m}$, the $T$-module $K/R$ has a minimal presentation of the form
$$
G_1 = [T^{\oplus \ell} \oplus T^{\oplus m}]^{\oplus n}= T^{\oplus \ell n} \oplus T^{\oplus mn} \overset{\Bbb B}{\longrightarrow} G_0 = T^{\oplus \ell} \oplus T^{\oplus m} \overset{\Bbb N'}{\longrightarrow} K/R \longrightarrow 0,
$$
where the matrix $\Bbb B$ is given by
$$\Bbb B =\left[ 
\begin{smallmatrix}
X^2_1 X_2 \cdots X_n & 0 & 0 & 0  & 0 & 0   \\
0 & \ddots & 0 & 0  & 0 & 0   \\
\vdots & \vdots & X^2_1 X_2 \cdots X_n & \vdots & \vdots & \vdots  \\
0 & 0 & 0 & X_1 X_2 \cdots X_n & 0 & 0 \\
0 & 0 & 0 & 0  & \ddots & 0  \\
0 & 0 & 0 & 0  & 0 & X_1 X_2 \cdots X_n 
\end{smallmatrix}\right].\vspace{1em}
$$
\noindent
Therefore, comparing with two presentations of $K/R$, we get a commutative diagram
\vspace{0.3em}
{\footnotesize
$$
\xymatrix{
G_1 \ar[r]^{\Bbb B}\ar[d]^{\xi} & G_0 \ar[r]\ar[d]^\cong & K/R\ar[r]\ar[d]^\cong & 0\\
 F_1 \ar[r]^{\Bbb A'}\ar[d]^{\eta} & G_0 \ar[r]\ar[d]^\cong & K/R \ar[r] \ar[d]^\cong & 0 \\
G_1 \ar[r]^{\Bbb B} & G_0 \ar[r] & K/R \ar[r] & 0
\vspace{2em}}
$$}

\noindent
of $T$-modules, where $\eta \circ \xi$ is an isomorphism.
Hence, 
$
\Bbb A'\ Q= \left( \Bbb B \mid  O \right)
$
for some $s \times s$ invertible matrix $Q$ with entries in $T$ (here $O$ denotes the null matrix). Setting $\Bbb M = \Bbb A Q$, we get \vspace{0.5em}
$
\text{\large $\Bbb M$} 
=
\arraycolsep5pt
\left(
\begin{array}{@{\,}ccc@{\,}}
~ & * &~\\
\hline
&\multicolumn{1}{c}{\raisebox{-10pt}[0pt][0pt]{\large $\Bbb A'$}}\\
&&\\
\end{array}
\right){\text{\large $Q$}} \ 
=
\arraycolsep5pt
\left(
\begin{array}{@{\,}c|c@{\,}}
~ * ~ & ~ * ~\\
\hline
\raisebox{-10pt}[0pt][0pt]{\large $\Bbb B$}&\raisebox{-10pt}[0pt][0pt]{\large $O$}\\
&\\
\end{array}
\right)
$, whence a required minimal presentation  
$$
F_1 \overset{\Bbb M}{\longrightarrow} F_0 \overset{\Bbb N}{\longrightarrow} K \longrightarrow 0
$$ of $K$ follows.

Let us prove that $a_{ij}, b_{ij} \in J$. We set $Z_1 =X_1^2$, and $Z_i = X_i$ for each $2 \le i \le n$. Then,
$a_{ij}\cdot(-1) + Z_j\cdot f_i = 0$
for every $1 \le i \le \ell$ and $1 \le j \le n$, whence $a_{ij} \in J$, because $\fkc K \subseteq \fkc S = \fkc$ and $\fkc =J/\fka$. Since $b_{ij}\cdot(-1) + Z_j\cdot g_i = 0$ for $j \ge 2$, we have $b_{ij} \in J$.

The last assertion about the generating system of the defining ideal $\fka$ of $R$ follows from the fact that $Z_1, Z_2, \ldots, Z_n$ forms a regular sequence on $T$. We refer to \cite[Proof of Theorem 7.8]{GTT} for details. 
\end{proof}


As a consequence of Theorem \ref{3.2}, we have the following. It exactly corresponds to \cite[Corollary 7.10]{GTT} for AGL rings.

\begin{cor}
With the same notation as in Theorem $\ref{3.2}$, the following assertions hold true.
\begin{enumerate}[$(1)$]
\item Suppose that $n = 3$. Then, $\rmr(R) = 2$, $q = 0$, $\ell =1$, and $m=0$, so that $\Bbb M = \left[\begin{smallmatrix}
a_{11} & a_{12} & a_{13} \\
X^2_1 & X_2 & X_3 
\end{smallmatrix}\right]$.
\item If $R$ has minimal multiplicity, then $q = 0$.
\end{enumerate}
\end{cor}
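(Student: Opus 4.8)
The plan is to read off both assertions directly from the shape of the matrix $\Bbb M$ produced by Theorem~\ref{3.2}, combined with the numerical data in Proposition~\ref{2.3a}. Recall from Theorem~\ref{3.2} that $F_0 = T\oplus T^{\oplus\ell}\oplus T^{\oplus m}$, so the rank of $F_0$ is $1+\ell+m=\rmr(R)$ by Proposition~\ref{2.3a}(4), and that $F_1$ has rank $\ell n + m n + q$; moreover the presentation $F_1\xrightarrow{\Bbb M}F_0\xrightarrow{\Bbb N}K\to 0$ is \emph{minimal}, so every entry of $\Bbb M$ lies in $\n$.

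For part~(1), suppose $n=3$. First I would pin down $\rmr(R)$. Since $R=T/\fka$ with $(T,\n)$ regular of dimension $n=3$ and $\fka\subseteq\n^2$, the Cohen--Macaulay type of $R$ is $\mu_T(\fka/\n\fka)-\depth\,$data, but more cleanly: $R$ is one-dimensional Cohen--Macaulay, not Gorenstein, so $\rmr(R)\ge 2$; on the other hand $\pd_T R=\depth T-\depth R=3-1=2$, and the last Betti number $\beta_2^T(R)=\rmr(R)$. A free resolution of $K\cong\rmK_R$ over $T$ is the $T$-dual (shifted) of a free resolution of $R$, so the number of generators of the first syzygy of $K$, namely $\operatorname{rank}F_1$, equals $\beta_1^T(R)$. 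But for $\fka\subseteq\n^2$ of height $2$ in a $3$-dimensional regular local ring, the Hilbert--Burch theorem forces the resolution of $R$ to be $0\to T^{r}\to T^{r+1}\to T\to R\to 0$ with $r=\rmr(R)$, i.e. $\beta_1^T(R)=r+1$ and $\beta_2^T(R)=r$. Dualizing, the resolution of $K$ is $0\to T\to T^{r+1}\to T^{r}\to K\to 0$, so $\operatorname{rank}F_0=r$ and $\operatorname{rank}F_1=r+1$. Comparing $\operatorname{rank}F_0=r$ with $\operatorname{rank}F_0=1+\ell+m=r$ (Proposition~\ref{2.3a}(4)) is consistent; comparing $\operatorname{rank}F_1=r+1$ with $\operatorname{rank}F_1=\ell n+mn+q=3(\ell+m)+q=3(r-1)+q$ gives $r+1=3(r-1)+q$, i.e. $q=4-2r$. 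Since $q\ge 0$ we get $r\le 2$, hence $r=2$ and $q=0$; then $\ell+m=r-1=1$ with $\ell>0$ forces $\ell=1$, $m=0$. The displayed form of $\Bbb M$ is then exactly the $2\times 3$ block $\left[\begin{smallmatrix}a_{11}&a_{12}&a_{13}\\X_1^2&X_2&X_3\end{smallmatrix}\right]$.

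For part~(2), assume $R$ has minimal multiplicity. I would use that minimal multiplicity of a one-dimensional Cohen--Macaulay local ring means $\e(R)=\mu_R(\m)$, equivalently $\m^2=a\m$ for a minimal reduction $(a)$, equivalently $\mu_R(\m)=\e(R)=\ell_R(R/\m^2)-? $; the cleanest route is that under minimal multiplicity one has a sharp bound relating the number of defining equations of $R$ to $\operatorname{rank}F_0$, or one uses the known structural fact (from \cite[Corollary 7.10]{GTT} in the AGL case, adapted here) that the ``extra'' columns $c_1,\dots,c_q$ of $\Bbb M$ correspond to minimal generators of $\fka$ not accounted for by the $2\times n$ minor blocks, and minimal multiplicity forces $\fka$ to be generated precisely by those minors. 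Concretely: minimal multiplicity gives $\mu_T(\fka)=\binom{\mu_R(\m)}{2}-?$ via the resolution of $R$ over $T$, matching it against the count $\ell\binom{n}{2}$-type contribution coming from the minor blocks; I would show the minor blocks alone already supply a minimal generating set of $\fka$, so $q=0$. An alternative, which I expect to be the actual argument: the columns $(c_1,\dots,c_q)$ sit in $F_1\to F_0$ as elements of $\n\cdot T\subseteq F_0$ hitting the first summand only (they have zero entries in the $\Bbb B$-block rows), so they correspond to syzygies of $\Bbb N$ concentrated in the ``$-1$'' coordinate; minimal multiplicity (via $\m K=\m S$, or $\fkc=\m^2+(\text{rest})$ interacting with $K$) makes such syzygies redundant.

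The main obstacle I anticipate is part~(2): unlike part~(1), where the Hilbert--Burch structure in embedding dimension $3$ pins everything numerically, for general $n$ one must genuinely use what minimal multiplicity buys — presumably that $\mu_R(S)=r+1$ (Proposition~\ref{2.3a}(5)) together with $\e(R)=\mu_R(\m)$ forces the defining ideal $\fka$ to have no generators beyond the $2\times 2$ minors appearing in the first two blocks of $\Bbb M$. I would make this precise by counting minimal generators of $\fka$: the first Betti number $\beta_1^T(R)$ equals $\operatorname{rank}F_1=n(\ell+m)+q=n(r-1)+q$, while minimal multiplicity gives an independent formula for $\beta_1^T(R)$ (from the fact that $\gr_\m(R)$ is Cohen--Macaulay with minimal multiplicity, so its defining ideal, hence $\fka$'s leading ideal, is well understood), and equating the two yields $q=0$. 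Getting that independent count right — distinguishing what the minor blocks contribute versus the stray columns — is the delicate bookkeeping step, but it is exactly parallel to \cite[Proof of Corollary 7.10]{GTT}, which I would invoke for the routine parts.
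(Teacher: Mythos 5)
Part~(1) of your proposal is correct and is essentially the paper's own argument: you identify $\operatorname{rank}F_0 = 1+\ell+m = \rmr(R)$ and $\operatorname{rank}F_1 = n(\ell+m)+q$, compare against the Hilbert--Burch resolution $0\to T^r\to T^{r+1}\to T\to R\to 0$ (so $\operatorname{rank}F_1 = r+1$), and solve $r+1 = 3(r-1)+q$ with $q\ge 0$ and $r\ge 2$.

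Part~(2) has a genuine gap. The number $\operatorname{rank}F_1 = n(\ell+m)+q$ that you need to pin down is \emph{not} $\beta_1^T(R)=\mu_T(\fka)$. Since $\pd_T R = n-1$ and the minimal resolution of $K\cong \rmK_R$ is the $T$-dual of the minimal resolution of $R$, the module $F_1$ in the presentation $F_1\to F_0\to K\to 0$ is $F_{n-2}^*$, i.e.\ $\operatorname{rank}F_1 = \beta_{n-2}^T(R)$. These coincide only when $n=3$, which is why your part~(1) argument came out right; for $n>3$ your intended identity ``$\beta_1^T(R)=n(r-1)+q$'' is false. (In Example~\ref{3.6}, with $n=4$, $\beta_1^T(R)=6$ but $\operatorname{rank}F_1 = 8$.) The other missing ingredient is the actual count: under minimal multiplicity, $R=T/\fka$ has a $1$-linear (Eagon--Northcott type) minimal resolution, and Sally's theorem \cite[Theorem 1(iii)]{S2} gives $\beta_{n-2}^T(R)=n(n-2)$. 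Combined with $\rmr(R)=n-1$, so $\ell+m=n-2$, the identity $n(n-2) = n(\ell+m)+q$ forces $q=0$. Your proposal gestures at both the Eagon--Northcott structure and the "count minimal generators of $\fka$" idea, but it leaves the relevant Betti number unidentified (and misidentified as $\beta_1$), and the formula you write, $\binom{\mu_R(\m)}{2}$ with a question mark, is the wrong Betti number for this comparison.
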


\begin{proof}
(1) Consider the minimal $T$-free resolution
$$
0 \longrightarrow F_2 \overset{{}^t \Bbb M}{\longrightarrow} F_1 \longrightarrow F_0 \longrightarrow R \longrightarrow 0,
$$ 
where the matrix $\Bbb M$ has the form stated in Theorem \ref{3.2}. We then have  
$$
\rmr(R)+1 = \rank_TF_1 = \ell n + mn + q = 3{\cdot}[\rmr(R)-1]+q,
$$
so that $4-2{\cdot}\rmr(R)=q \ge 0$. Therefore, $\rmr(R)=2$, and $q=0$, since $R$ is not a Gorenstein ring. Thus, $\ell=1$, $m=0$, because $\ell + m = \rmr(R) -1$.

(2) Since $R$ has multiplicity $n$, we have $\rmr(R)=n-1$, while by \cite[{\sc Theorem} 1 (iii)]{S2}, 
$
n(n-2)=\ell n + mn + q.
$
Hence, $q=0$, because $\ell + m + 1 =n$.
\end{proof}



In this paper we will refer so often to examples arising from numerical semigroup rings, that let us explain here about a canonical form of generators for their canonical modules. 
Let $0 < a_1, a_2, \ldots, a_\ell \in \Bbb Z~(\ell >0)$ be positive integers such that $\mathrm{GCD}~(a_1, a_2, \ldots, a_\ell)=1$. We set $$H = \left<a_1, a_2, \ldots, a_\ell\right>=\left\{\sum_{i=1}^\ell c_ia_i \mid 0 \le c_i \in \Bbb Z~\text{for~all}~1 \le i \le \ell \right\}$$
and call it the numerical semigroup generated by the numbers $\{a_i\}_{1 \le i \le \ell}$. Let $V = k[[t]]$ be the formal power series ring over a field $k$. We set $R = k[[H]] = k[[t^{a_1}, t^{a_2}, \ldots, t^{a_\ell}]]$ in $V$ and call it the semigroup ring of $H$ over $k$. The ring  $R$ is a one-dimensional Cohen-Macaulay local domain with $\overline{R} = V$ and $\m = (t^{a_1},t^{a_2}, \ldots, t^{a_\ell} )$, the maximal ideal. 
Let $$\rmc(H) = \min \{n \in \Bbb Z \mid m \in H~\text{for~all}~m \in \Bbb Z~\text{such~that~}m \ge n\}$$ be the conductor of $H$ and set $\rmf(H) = \rmc(H) -1$. Hence, $\rmf(H) = \max ~(\Bbb Z \setminus H)$, which is called the Frobenius number of $H$. Let $$\mathrm{PF}(H) = \{n \in \Bbb Z \setminus H \mid n + a_i \in H~\text{for~all}~1 \le  i \le \ell\}$$ denote the set of pseudo-Frobenius numbers of $H$. Therefore, $\rmf(H)$ coincides with the $\rma$-invariant of the graded $k$-algebra $k[t^{a_1}, t^{a_2}, \ldots, t^{a_\ell}]$ and $\sharp \mathrm{PF}(H) = \rmr(R)$  (\cite[Example (2.1.9), Definition (3.1.4)]{GW}).  We set  $f = \rmf(H)$ and $$K = \sum_{c \in \mathrm{PF}(H)}Rt^{f-c}$$ in $V$. Then $K$ is a fractional ideal of $R$ such that $R \subseteq K \subseteq \overline{R}$ and $$K \cong \rmK_R = \sum_{c \in \mathrm{PF}(H)}Rt^{-c}$$ as an $R$-module (\cite[Example (2.1.9)]{GW}). Therefore, $K$ is a canonical fractional ideal of $R$. Notice that  $t^f \not\in K$ but $\m t^f \subseteq R$, whence $K:\m = K + Rt^f$.


Before stating the concrete example, let us explore the properties of $2$-$\AGL$ numerical semigroup rings.

\begin{prop}\label{2.4}
Suppose that $R$ is a $2$-$\AGL$ ring. Then 
$$
\displaystyle K/R = \bigoplus_{c \in {\rm PF}(H)\setminus \{f\}} R {\cdot} \overline{t^{f-c}}
$$
where $\overline{(*)}$ denotes the image in $K/R$.
\end{prop}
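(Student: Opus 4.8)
The plan is to reduce the statement to a combinatorial fact about monomials and then recognise the asserted sum as the canonical decomposition of $K/R$ into ``$a_1$-chains'' for a suitable element $a_1\in H$. To begin with, the easy points: since $f\in\mathrm{PF}(H)$ and $t^{f-f}=1\in R$, the summand for $c=f$ is zero in $K/R$; and for $c\in\mathrm{PF}(H)\setminus\{f\}$ we have $\overline{t^{f-c}}\ne 0$, since $f-c\in H$ (with $f-c\ne 0$ and $c\in\mathrm{PF}(H)$) would give $f=c+(f-c)\in H$. Thus $\{\,\overline{t^{f-c}}\mid c\in\mathrm{PF}(H)\setminus\{f\}\,\}$ generates $K/R$, and by Proposition \ref{2.3a}(4) it consists of exactly $\rmr(R)-1=\mu_R(K/R)$ elements, hence is a minimal system of generators; only directness of the sum remains to be shown.

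Next I would pass to monomials. Both $R=k[[H]]$ and its module-finite birational extension $S=R[K]$ are monomial $k$-subalgebras of $V=k[[t]]$, so $\fkc=R:S$ is a monomial ideal of $R$, and $\ell_R(R/\fkc)=2$; therefore there is a unique $a_1\in H\setminus\{0\}$ with $t^{a_1}\notin\fkc$, one has $\fkc=\bigoplus_{h\in H\setminus\{0,a_1\}}kt^h$, and $A:=R/\fkc\cong k[X]/(X^2)$ with $\overline{\m}=(\overline{t^{a_1}})$. Write $K=\bigoplus_{j\in E}kt^j$, where $E=H\cup\bigcup_{c\in\mathrm{PF}(H)\setminus\{f\}}(f-c+H)$, so that $K/R=\bigoplus_{j\in E\setminus H}k\,\overline{t^j}$ as a $k$-space; the $A$-module structure is encoded by the monomial shift $\overline{t^j}\mapsto\overline{t^{j+a_1}}$ (multiplication by $t^{a_1}$, while every $t^h$ with $h\in H\setminus\{0,a_1\}$ acts as zero), and this shift squares to zero because $t^{2a_1}\in\fkc=R:K$ (Proposition \ref{2.3a}(1)) forces $j+2a_1\in H$ for all $j\in E$.

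Sorting the monomial basis of $K/R$ into orbits of this shift then produces an internal direct sum
$$
K/R=\bigoplus_{j\ \text{a chain start}}A\cdot\overline{t^j},
$$
where $j\in E\setminus H$ is a \emph{chain start} when $j-a_1\notin E\setminus H$, and $A\cdot\overline{t^j}$ is $k\overline{t^j}\oplus k\overline{t^{j+a_1}}\cong A$ if $j+a_1\notin H$ and is $k\overline{t^j}\cong A/\overline{\m}$ otherwise; that the sum is direct is immediate, as each $j'\in E\setminus H$ lies in exactly one piece (the one starting at $j'$, or at $j'-a_1$, the case $j'-2a_1\in E\setminus H$ being excluded by $j'\notin H$). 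It then remains to identify the chain starts with the indices $f-c$: for $c\in\mathrm{PF}(H)\setminus\{f\}$ the index $f-c$ is a chain start because $(f-c)-a_1=f-(c+a_1)\notin E\setminus H$ --- indeed $(c+a_1)+h=c+(a_1+h)\in H$ for every $h\in H$, since $c\in\mathrm{PF}(H)$ and $a_1+h\in H\setminus\{0\}$, so $f-(c+a_1)$ lies in no $f-d+H$ with $d\in\mathrm{PF}(H)\setminus\{f\}$. As $c\mapsto f-c$ is injective and, by Proposition \ref{2.3a}(4), the number of chain starts equals $\mu_R(K/R)=\rmr(R)-1=\sharp(\mathrm{PF}(H)\setminus\{f\})$, this map is a bijection onto the set of chain starts, and substituting into the displayed decomposition yields $K/R=\bigoplus_{c\in\mathrm{PF}(H)\setminus\{f\}}R\cdot\overline{t^{f-c}}$.

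The delicate point is the middle step: establishing that $\fkc$ is monomial of the stated form and that multiplication by $t^{a_1}$ is nilpotent of order two on $K/R$, which is precisely what legitimises the chain decomposition. Granting this, the bijection $c\leftrightarrow f-c$ and the concluding count are routine.
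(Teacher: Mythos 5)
Your proof is correct, and it is organised differently from the paper's, though both arguments rest on the same structural inputs: the monomial nature of $R$, $K$ and $\fkc$; the fact that $R/\fkc$ is a local ring of length two in which $\overline{t^{a_1}}$ generates the maximal ideal and squares to zero; and the count $\mu_R(K/R)=\rmr(R)-1=\sharp\bigl(\mathrm{PF}(H)\setminus\{f\}\bigr)$ from Proposition~\ref{2.3a}(4). The paper secures directness by a socle/length count: it splits the index set into $I\sqcup J$ according to whether $\operatorname{Ann}_{R/\fkc}\overline{t^{f-c_i}}$ vanishes, observes that the elements $t^b\,\overline{t^{f-c_i}}$ for $i\in I$ are monomials of distinct pseudo-Frobenius exponents $f+b-c_i$ and hence are $k$-linearly independent inside $\m\cdot(K/R)$, and then matches lengths. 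You instead exhibit the direct sum explicitly: you partition the monomial $k$-basis of $K/R$ into chains under the nilpotent shift $\overline{t^j}\mapsto\overline{t^{j+a_1}}$, prove that each $f-c$ is a chain start (using that $c\in\mathrm{PF}(H)$ forces $c+a_1+H\subseteq H$, so $f-c-a_1$ lies in no $f-d+H$), and close with the same count of minimal generators. Your version makes the internal decomposition of $K/R$ as an $R/\fkc$-module visible and fills in the dimension bookkeeping that the paper compresses into a terse ``therefore''; the paper's version is shorter and records, in passing, that $f+b-c_i\in\mathrm{PF}(H)$ for $i\in I$, a fact it reuses in the theorem that immediately follows.
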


\begin{proof}
We set $r =\rmr(R)$, $f=c_r$ and write ${\rm PF}(H) =\{c_1, c_2, \ldots, c_r\}$. Let us consider 
$$
I = \{i \in \Lambda \mid \operatorname{Ann}_{R/\fkc} \overline{t^{f-c_i}} =(0)\}, \ \ 
J = \{i \in \Lambda \mid  \operatorname{Ann}_{R/\fkc} \overline{t^{f-c_i}} \ne (0)\}
$$
where $\fkc = R:R[K]$ and $\Lambda =\{1, 2, \cdots, r-1\}$. Notice that $I \cup J = \Lambda$ and $I \ne \emptyset$. Since $R$ is a $2$-$\AGL$ ring, there exists $b \in H$ such that $(0):_{R/\fkc}\m  = \fkm/\fkc = [(t^b) + \fkc]/\fkc$. Then, for each $i \in I$, we have
$t^b {\cdot} \overline{t^{f-c_i}} \ne 0$ and $\m {\cdot}\overline{t^{f-c_i+b}} =(0)$ in $K/R$. Hence $f+b-c_i \in {\rm PF}(H)$, and the elements $\{t^b{\cdot} \overline{t^{f-c_i}}\}_{i \in I}$ in $K/R$ are linearly independent over $k$. Therefore
$$
K/R = \sum_{i \in I} R{\cdot}\overline{t^{f-c_i}} \bigoplus \sum_{j \in J} R{\cdot}\overline{t^{f-c_j}} = \bigoplus_{i \in \Lambda} R {\cdot} \overline{t^{f-c_i}}
$$
as desired. 
\end{proof}

For the moment, suppose that $R$ is a $2$-$\AGL$ ring and we maintain the notation as in the proof of Proposition \ref{2.4}.
Choose $b = a_j$ for some $1 \le j \le \ell$. We then have $f+b-c_i \in {\rm PF}(H)$ for each $i \in I$, while $f-c_j \in {\rm PF}(H)$ for each $j \in J$ if $J \ne \emptyset$.
 By writing $I = \{c_1 < c_2 < \cdots < c_p\}~(p>0)$ and $J = \{d_1 < d_2 < \cdots <d_p\}~(q \ge 0)$, we have the following.

\begin{thm}
The following assertions hold true. 
\begin{enumerate}[$(1)$]
\item $f+b=c_i + c_{p+1 -i}$ for every $1 \le  i \le p$.
\item If $J \ne \emptyset$, then $f=d_j +d_{q+1-j}$ for every $1 \le  j \le q$.
\end{enumerate}
\end{thm}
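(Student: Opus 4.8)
The plan is to exploit the symmetry of the canonical module. Recall that for a numerical semigroup ring $R=k[[H]]$, the pseudo-Frobenius numbers ${\rm PF}(H)=\{c_1<c_2<\cdots<c_r\}$ with $c_r=f$ govern the canonical fractional ideal $K=\sum_{i=1}^r R\cdot t^{f-c_i}$, and the duality $\Hom_R(K,K)=S$ together with $\fkc=R:K$ encodes a Matlis-type pairing on $K/R$. The key structural input is Proposition \ref{2.4}: $K/R=\bigoplus_{i\in\Lambda}R\cdot\overline{t^{f-c_i}}$ with $\Lambda=\{1,\dots,r-1\}$, together with the partition $\Lambda=I\sqcup J$, where $I=\{c_1<\cdots<c_p\}$ indexes the free summands $(R/\fkc)$ and $J=\{d_1<\cdots<d_q\}$ indexes the $(R/\fkm)$ summands in Proposition \ref{2.3a} (4), so $\ell=p$, $m=q$, $p+q=r-1$. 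First I would observe that since $R$ is $2$-$\AGL$, by Theorem \ref{mainref} (6) we have $\ell_R(S/K)=2$ and $S=K:\fkc$, so that $S/R$ and hence the pairing has a very rigid shape.

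For part (1): the element $b=a_j\in H$ was chosen so that $(0):_{R/\fkc}\fkm=\fkm/\fkc$, equivalently multiplication by $t^b$ sends each free generator $\overline{t^{f-c_i}}$ ($i\in I$) to a nonzero socle element of $K/R$. I would show that $t^b\cdot\overline{t^{f-c_i}}$ lands in $S/R$ (indeed in $\fkm S/R\subseteq S/R$), and that these $p$ elements, being $k$-linearly independent socle elements, must exhaust a $p$-dimensional socle. The numerical content is: $f+b-c_i\in{\rm PF}(H)$ for each $i\in I$. Now $f+b-c_i$ is a pseudo-Frobenius number that is $\le f$ (since $b\le c_i$ would need checking, but in fact $f+b-c_i\in{\rm PF}(H)\subseteq\{c_1,\dots,c_r\}$ and one argues it is $<f$ unless $c_i=b$), so the map $i\mapsto(\text{index of }f+b-c_i)$ is a well-defined involution on a subset of indices. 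The crux is to prove this involution is exactly the order-reversing bijection $c_i\leftrightarrow c_{p+1-i}$ on $I$. This follows because $t^b$ induces an injective (by the socle computation) $k$-linear map from $\mathrm{span}_k\{\overline{t^{f-c_i}}:i\in I\}$ into the socle of $K/R$, the socle of $K/R$ is $\mathrm{span}_k\{\overline{t^{f-c}}:c\in\Lambda,\ \fkm\overline{t^{f-c}}=0\}$, and the symmetry $c\in{\rm PF}(H)\Rightarrow$ the pairing forces $c_i+(f+b-c_i)=f+b$; monotonicity of $i\mapsto f+b-c_i$ being reversing then pins down $f+b-c_i=c_{p+1-i}$.

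For part (2): when $J\ne\emptyset$, the generators $\overline{t^{f-d_j}}$ with $j\in J$ are themselves annihilated by $\fkm$ (they generate $R/\fkm$ summands), hence $f-d_j\in{\rm PF}(H)$ for each $j\in J$, and moreover $f-d_j\in\Lambda$-indexed set, i.e.\ $f-d_j\in\{c_1,\dots,c_{r-1}\}$. I would argue $f-d_j$ actually lies in $J$ again: if $f-d_j$ were one of the $I$-type indices then $\overline{t^{f-(f-d_j)}}=\overline{t^{d_j}}$ would generate a free $R/\fkc$ summand, but $\fkm\overline{t^{f-d_j}}=0$ forces, via the perfect pairing on $K/R$ induced by $\fkc$-duality (using $\ell_R(K/R)=2\ell+m$ and the self-duality $\Hom_{R/\fkc}(K/R,K/R)\cong\ldots$), that its dual partner also be socle, i.e.\ of $J$-type. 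Thus $j\mapsto(\text{index of }f-d_j)$ is an order-reversing involution on $J=\{d_1<\cdots<d_q\}$, giving $d_j+d_{q+1-j}=f$. Symmetry $d_j\mapsto f-d_j$ being order-reversing is automatic since $x\mapsto f-x$ reverses order.

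The main obstacle I anticipate is making rigorous the claim that multiplication by $t^b$ (resp.\ the map $c\mapsto f+b-c$) restricts to a \emph{bijection} on the relevant index set $I$ (resp.\ that $c\mapsto f-c$ is an involution of $J$) — that is, controlling both injectivity and surjectivity. Injectivity of $\cdot t^b$ on the span of the $I$-generators comes from the socle computation $(0):_{R/\fkc}\fkm=\fkm/\fkc$ which says $t^b$ does not kill any free $R/\fkc$ summand; surjectivity onto the $p$-dimensional socle of the free part requires knowing the socle has dimension exactly $p$, which should follow from $K/R\cong(R/\fkc)^{\oplus\ell}\oplus(R/\fkm)^{\oplus m}$ with $\ell_R(R/\fkc)=2$ (so $\Soc(R/\fkc)$ is $1$-dimensional), giving $\dim_k\Soc(K/R)=\ell+m=r-1$, and then separating the $I$-part from the $J$-part via the grading by $t$-degree. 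Once the bijections are established as order-reversing involutions, the pairing identities $f+b=c_i+c_{p+1-i}$ and $f=d_j+d_{q+1-j}$ are immediate from "an order-reversing involution of a totally ordered finite set that adds up to a constant is the flip". I would lean on Theorem \ref{mainref} and Proposition \ref{2.3a} throughout to supply the structural facts about $\fkc$, $S$, and $K/R$, and on the explicit description of ${\rm PF}(H)$ versus $K$ recalled just before Proposition \ref{2.4}.
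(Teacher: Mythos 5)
Your strategy matches the paper's: both parts reduce to showing that $x\mapsto f+b-x$ is a well-defined bijection of $\{c_i : i\in I\}$ and $x\mapsto f-x$ is a well-defined bijection of $\{c_j : j\in J\}$; since these maps are order-reversing, they must be the flip, which gives the two identities. The paper simply asserts well-definedness and bijectivity. Your attempt to justify these, however, leaves a genuine gap in part $(1)$ and makes an unnecessary and slightly misleading detour in part $(2)$.

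In part $(2)$, no ``perfect pairing'' or $\fkc$-duality is needed, and the inference ``its dual partner also be socle, i.e.\ of $J$-type'' is not quite right as stated: the free summands $R/\fkc$ of $K/R$ also have nonzero socle, so being a socle element does not by itself put something in the $J$-part. The clean observation is that $d_j$ is itself a pseudo-Frobenius number, so $\fkm\,\overline{t^{d_j}}=0$ directly; hence if $f-d_j$ were $I$-type, the generator $\overline{t^{f-(f-d_j)}}=\overline{t^{d_j}}$ of that $R/\fkc$-summand would be killed by $\fkm$, contradicting $\ell_R(R/\fkc)=2$. In part $(1)$, the step ``monotonicity of $i\mapsto f+b-c_i$ being reversing then pins down $f+b-c_i=c_{p+1-i}$'' presupposes the crux you have not established, namely that $\{f+b-c_i : i\in I\}$ is the set $\{c_i : i\in I\}$ rather than some other collection of pseudo-Frobenius numbers; ``separating the $I$-part from the $J$-part via the grading by $t$-degree'' is a gesture, not an argument. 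What closes it is a counting argument relying on part $(2)$: the $p+q=r-1$ socle elements $t^b\overline{t^{f-c_i}}=\overline{t^{f+b-c_i}}$ $(i\in I)$ and $\overline{t^{f-c_j}}$ $(j\in J)$ are nonzero and lie in pairwise distinct direct summands of $K/R=\bigoplus_{i\in\Lambda}R\cdot\overline{t^{f-c_i}}$, hence are pairwise distinct, so their exponents form $r-1$ distinct members of ${\rm PF}(H)\setminus\{f\}$, i.e., all of them; since part $(2)$ gives $\{f-c_j : j\in J\}=\{c_j : j\in J\}$, the complementary set $\{f+b-c_i : i\in I\}$ must equal $\{c_i : i\in I\}$, and only then does order-reversal yield $f+b=c_i+c_{p+1-i}$.
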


\begin{proof}
The assertions follow from the fact that the maps
$$
\{c_i \mid i \in I\} \to \{c_i \mid i \in I\}, x \mapsto f+b-x, \quad \{c_j \mid j \in J\} \to \{c_j \mid j \in J\}, x \mapsto f-x
$$
are well-defined and bijective. 
\end{proof}

As a consequence, we get the following, which corresponds to the case where $J = \emptyset$.

\begin{cor}
Suppose that $R$ is a $2$-$\AGL$ ring. Then the following conditions are equivalent.
\begin{enumerate}[$(1)$]
\item $K/R \cong (R/\fkc)^{\oplus (r-1)}$ as an $R$-module.
\item There is an integer $1 \le j \le \ell$ such that $f + a_j = c_i + c_{r-i}$ for every $1 \le i \le r-1$.
\end{enumerate}
\end{cor}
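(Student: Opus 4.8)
The plan is to reduce the statement to the direct-sum decomposition of $K/R$ supplied by Proposition~\ref{2.4} and to the symmetry assertions in the Theorem immediately preceding the Corollary, so that what is left is elementary bookkeeping with pseudo-Frobenius numbers. I keep the notation of Proposition~\ref{2.4}: write $\mathrm{PF}(H)=\{c_1<c_2<\dots<c_r\}$ with $c_r=f$, put $\Lambda=\{1,\dots,r-1\}$ and $\fkc=R:R[K]$, and let $I$ (resp. $J$) denote the set of $i\in\Lambda$ for which $\operatorname{Ann}_{R/\fkc}\overline{t^{f-c_i}}$ is zero (resp. nonzero).

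First I would show that (1) is equivalent to the condition $J=\emptyset$. By Proposition~\ref{2.4}, $K/R=\bigoplus_{i\in\Lambda}R{\cdot}\overline{t^{f-c_i}}$ is a direct sum of $r-1$ nonzero cyclic modules. Since $K\subseteq S=R[K]$ and $\fkc S=\fkc$, each $\operatorname{Ann}_R\overline{t^{f-c_i}}$ satisfies $\fkc\subseteq\operatorname{Ann}_R\overline{t^{f-c_i}}\subsetneq R$; as $\ell_R(R/\fkc)=2$, the only ideals between $\fkc$ and $R$ are $\fkc$ and $\fkm$, so each summand is isomorphic to $R/\fkc$ or $R/\fkm$. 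Because $R$ is complete, Krull--Schmidt applies, and $K/R\cong(R/\fkc)^{\oplus(r-1)}$ holds if and only if every summand is $\cong R/\fkc$, that is, if and only if $J=\emptyset$.

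For $(1)\Rightarrow(2)$: when $J=\emptyset$ we have $I=\Lambda$, so in the notation of the preceding Theorem $p=r-1$ and the elements there written $c_1<\dots<c_p$ are exactly $c_1<\dots<c_{r-1}$. Since the element $b$ of Proposition~\ref{2.4} may be chosen to be a minimal generator $a_j$ of $\fkm$, part~(1) of that Theorem reads $f+a_j=c_i+c_{r-i}$ for all $1\le i\le r-1$, which is precisely (2). For $(2)\Rightarrow(1)$ I would argue by contradiction: assume (2) for some $1\le j\le\ell$ but $J\ne\emptyset$, and fix $i_0\in J$. By the remarks preceding the Theorem, $f-c_{i_0}\in\mathrm{PF}(H)$, and since $c_{i_0}>0$ this is $c_{i_1}$ for some $i_1\in\Lambda$. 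Evaluating the equality in (2) at $i=i_1$ gives $f+a_j-c_{i_1}=c_{r-i_1}\in\mathrm{PF}(H)$; but $f+a_j-c_{i_1}=f+a_j-(f-c_{i_0})=c_{i_0}+a_j$, and $c_{i_0}\in\mathrm{PF}(H)$ forces $c_{i_0}+a_j\in H$ by the definition of a pseudo-Frobenius number, contradicting $\mathrm{PF}(H)\cap H=\emptyset$. Hence $J=\emptyset$, and (1) follows from the first step.

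The one genuinely new ingredient is the short computation in $(2)\Rightarrow(1)$: a nonempty $J$ yields, through the symmetry $x\mapsto f-x$ recorded before the Theorem, a pseudo-Frobenius number $f-c_{i_0}$, and the symmetry $x\mapsto f+a_j-x$ of hypothesis~(2) then translates it by $a_j$ back into $\mathrm{PF}(H)$, which is impossible. I expect the only delicate points to be matching the local relabelling $I=\{c_1<\dots<c_p\}$ of the Theorem with the global enumeration of $\mathrm{PF}(H)$ in the case $J=\emptyset$, and recording that $b$ may be taken among $a_1,\dots,a_\ell$; both are already in place in the text, so I anticipate no serious obstacle.
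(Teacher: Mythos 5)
Your proof is correct and follows the same route the paper intends: it reduces condition~(1) to $J=\emptyset$ via Proposition~\ref{2.4} and the decomposition $K/R\cong(R/\fkc)^{\oplus\ell}\oplus(R/\fkm)^{\oplus m}$, and then deduces condition~(2) from part~(1) of the preceding Theorem with $b=a_j$. The paper states the corollary merely "corresponds to the case $J=\emptyset$" without spelling out the reverse implication; your contradiction argument for $(2)\Rightarrow(1)$ — using that $f-c_{i_0}\in\mathrm{PF}(H)$ for $i_0\in J$ and that the symmetry in~(2) would then force the pseudo-Frobenius number $c_{i_0}+a_j$ into $H$ — supplies exactly the missing detail, and it is sound.
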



Let us now go back to state the example of Theorem \ref{3.2}. With the notation of Theorem \ref{3.2}, we cannot expect $q=0$ in general, as we show in the following.


\begin{ex}[cf. {\cite[Example 5.5]{CGKM}}]\label{3.5}
Let $V = k[[t]]$ be the formal power series ring over a field $k$, and set $R = k[[t^5,t^7,t^9,t^{13}]]$. Hence, $R=k[[H]]$, the semigroup ring of the numerical semigroup $H=\left<5, 7, 9, 13\right>$. We then have ${\rm f}(H) = 11$ and $\mathrm{PF}(H) =\{8, 11\}$, whence $K=R + Rt^3$ and $R[K] = k[[t^3, t^5, t^7]] = R + Rt^3+Rt^6$. Therefore, $\fkc= (t^{10},t^7,t^9,t^{13})$ and $\ell_R(R/\fkc) =2$, so that by Theorem \ref{mainref}, $R$ is a $2$-AGL ring with $\rmr(R)= 2$. 
We are interested in the defining ideal $\fka$ of $R$.
Let $T=k[[X,Y,Z,W]]$ be the formal power series ring, and let $\varphi : T \to R$ be the $k$-algebra map defined by $\varphi (X) = t^5, \varphi(Y)=t^7, \varphi(Z) = t^9$, and $\varphi(W) = t^{13}$. Then, $R$ has a minimal $T$-free resolution
of the form
$$
\Bbb F : \ 0 \to T^2 \overset{\Bbb M}{\to} T^6 \overset{\Bbb N}{\to} T^5 \overset{\Bbb L}{\to} T \to R \to 0,$$
where the matrices $\Bbb M, \Bbb N,$ and $\Bbb L$ are given by
$$
{}^t\Bbb M =\left[ 
\begin{smallmatrix}
W & X^2 & XY & YZ & Y^2 - XZ & Z^2 - XW \\
X^2 & Y & Z & W & 0 & 0\\
\end{smallmatrix}\right],  
$$
$$
\Bbb N =\left[ 
\begin{smallmatrix}
-Z^2 + XW & 0 & X^2Z & -X^3 & 0 & W \\
Y^2 - XZ & -X^2Y & X^3 & 0 & -W & 0 \\
0 & 0 & W & -Z & 0 & Y \\
0 & -W & 0 & Y & -Z & -X \\
0 & Z & -Y & 0 & X & 0\\
\end{smallmatrix}\right], 
$$
$$
\Bbb L =\left[ 
\begin{smallmatrix}
Y^2-XZ & Z^2-XW & X^4-YW & X^3Y-ZW & X^2YZ-W^2\\
\end{smallmatrix}\right].
$$ 
The $T$-dual of $\Bbb F$ gives rise to the presentation
$$
T^6 \overset{{}^t\Bbb M}{\to} T^2 \to K \to 0
$$
of the canonical fractional ideal $K=R + Rt^3$,
 so that 
$$
K/R \cong T/(X^2, Y, Z, W) \cong R/{\fkc}.
$$ 
We have
$\fka =\Ker \varphi = \rmI_2
\left(\begin{smallmatrix}
W & X^2 & XY & YZ \\
X^2 & Y & Z & W \\
\end{smallmatrix}\right) + (Y^2 - XZ, Z^2 - XW)
$.
\end{ex}

We note one example of $2$-${\rm AGL}$ rings of minimal multiplicity, whence $q=0$.

\begin{ex}[cf. {\cite[Example 5.6]{CGKM}}]\label{3.6}
Let $V = k[[t]]$ be the formal power series ring over a field $k$, and set $R = k[[H]]$, where $H=\left<4, 9, 11, 14\right>$. Then, ${\rm f}(H) = 10$ and ${\mathrm PF}(H) = \{5, 7, 10\}$, whence $K=R + Rt^3+ Rt^5$ and $R[K] = k[[t^3, t^4, t^5]] = R + Rt^3 + Rt^5 + Rt^6$. Therefore, $\fkc = (t^8,t^9,t^{11},t^{14})$ and $\ell_R(R/\fkc) =2$, so that by Theorem \ref{mainref}, $R$ is a $2$-AGL ring possessing minimal multiplicity $4$ and $\rmr(R)=3$. We consider the $k$-algebra map $\varphi : T \to R$ defined by $\varphi (X) = t^4, \varphi(Y)=t^9, \varphi(Z) = t^{11}$, and $\varphi(W) = t^{14}$, where $T=k[[X,Y,Z,W]]$ denotes the formal power series ring. Then, $R$ has a minimal $T$-free resolution 
$$
\Bbb F : \ 0 \to T^3 \overset{\Bbb M}{\to} T^8 \overset{\Bbb N}{\to} T^6 \overset{\Bbb L}{\to} T \to R \to 0$$ where the matrices $\Bbb M, \Bbb N,$ and $\Bbb L$ are given by
$$
{}^t\Bbb M = \left[
\begin{smallmatrix}
-Z & -X^3 & -W & -X^2Y & Y & W & X^4 & X^2Z \\
X^2 & Y & Z & W & 0 & 0 & 0 & 0\\
0 & 0 & 0 & 0 & X & Y & Z & W
\end{smallmatrix}\right],
$$
$$
\Bbb N =\left[ 
\begin{smallmatrix}
-X^2Z & 0 & X^4 & 0 & 0 & 0 & W & -Z \\
0 & 0 & W & -Z & 0 & W & 0 & -Y \\
0 & W & 0 & -Y & -X^2Y & X^3 & 0 & 0 \\
-W & 0 & 0 & X^2 & 0 & -Z & Y & 0 \\
0 & Z & -Y & 0 & -W & 0 & 0 & X \\
Y & -X^2 & 0 & 0 & Z & 0 & -X & 0 \\
\end{smallmatrix}\right], 
$$
$$
\Bbb L =\left[ 
\begin{smallmatrix}
Y^2-XW & X^5-YZ & Z^2-X^2W & X^3Z-YW & X^4Y-ZW & X^2YZ - W^2\\
\end{smallmatrix}\right].
$$ 
Taking $T$-dual of $\Bbb F$, we have the presentation 
$$
T^8 \overset{{}^t\Bbb M}{\to} T^3 \to K \to 0
$$
of $K = R + Rt^3 + Rt^5$, so that 
$$
K/R \cong T/(X^2, Y, Z, W) \oplus T/\n \cong R/\fkc \oplus R/\fkm.
$$
Hence, $K/R$ is not $R/\fkc$-free.
We have
$
\Ker \varphi = \rmI_2
\left(\begin{smallmatrix}
-Z & -X^3 & -W & -X^2Y \\
X^2 & Y & Z & W \\
\end{smallmatrix}\right) + 
\rmI_2
\left(\begin{smallmatrix}
Y & W & X^4 & X^2Z \\
X & Y & Z & W \\
\end{smallmatrix}\right).
$
\end{ex}

We are now asking for a sufficient condition for $R = T/\fka$ to be a $2$-AGL ring in terms of the presentation of the canonical ideal. Let us maintain the setting in the preamble of this section, assuming $R$ possesses a canonical fractional ideal  $K$ of the form 
$$
K=R+ \sum_{i=1}^\ell Rf_i + \sum_{j=1}^m Rg_j
$$
where $f_i, g_j \in K$, and $\ell >0$, $m \ge 0$ with $\ell + m + 1=\rmr(R)$. 
We then have the following. We should compare it with \cite[Theorem 7.8]{GTT}.

\begin{thm}\label{3.4a}
Let $X_1, X_2, \ldots, X_n$ be a regular system of parameters of $T$ and assume  that $K$ has a presentation of the form
$$
F_1 \overset{\Bbb M}{\longrightarrow} F_0 \overset{\Bbb N}{\longrightarrow} K \to 0  \quad \quad \quad (\sharp)
$$
where $\Bbb N$ and $\Bbb M$ are matrices of the form  stated in Theorem $\ref{3.2}$, satisfying the condition that $a_{ij}, b_{pk} \in (X_1^2) + (X_2, X_3, \ldots, X_n)$ for every $1 \le i \le \ell$, $1 \le j \le n$, $1 \le p \le m$, and $2 \le k \le n$. Then $R$ is a $2$-${\rm AGL}$ ring.
\end{thm}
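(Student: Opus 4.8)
The plan is to verify condition~$(7)$ of Theorem~\ref{mainref}: writing $S=R[K]$ and $\fkc=R:S$, it is enough to show $\ell_R(R/\fkc)=2$. Put $x_i=\overline{X_i}\in R$, let $J=(X_1^2)+(X_2,\dots,X_n)$ in $T$, and let $\fkc'=(x_1^2)+(x_2,\dots,x_n)\subseteq R$ be the image of $J$. First I would pin down $\fkc'$. Since $\fka\subseteq\n^2$, the elements $x_1,\dots,x_n$ minimally generate $\m$, so $x_1\notin\fkc'$ (otherwise $\m=(x_2,\dots,x_n)$), while $x_2,\dots,x_n\in\fkc'$ and $\m^2\subseteq\fkc'$; hence $\m/\fkc'\cong R/\m$ and $\ell_R(R/\fkc')=2$. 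Comparing this with $\ell_T(T/J)=2$ (which holds because $X_1^2,X_2,\dots,X_n$ is a regular sequence on $T$) forces $\fka\subseteq J$, so that $R/\fkc'\cong T/J$. Thus $\fkc'$ is an ideal of $R$ of colength $2$, and the whole task is to prove $\fkc=\fkc'$.

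For the inclusion $\fkc'\subseteq\fkc$, I would read the relations off the presentation~$(\sharp)$. With $z_1=x_1^2$ and $z_k=x_k$ for $k\ge 2$, the $f$-blocks of $\Bbb M$ give $z_jf_i=\alpha_{ij}$ for $1\le i\le\ell$ and $1\le j\le n$, where $\alpha_{ij}$ is the image of $a_{ij}\in J$, so $\alpha_{ij}\in\fkc'$; the $g$-blocks give $x_kg_p=\beta_{pk}$ for $1\le p\le m$ and $1\le k\le n$, where $\beta_{pk}$ is the image of $b_{pk}$, so $\beta_{pk}\in\fkc'$ for $k\ge 2$ and $\beta_{p1}\in R$. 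A short separate step shows $\beta_{p1}\in\m$: otherwise $x_1g_p=\beta_{p1}$ is a unit, hence $x_1\cdot(\beta_{p1}^{-1}g_p)=1$ with $\beta_{p1}^{-1}g_p\in\overline R$ integral over $R$, and multiplying a monic equation for $\beta_{p1}^{-1}g_p$ by a sufficiently high power of $x_1$ yields $1\in(x_1)$, which is absurd. Hence $x_1^2g_p=x_1\beta_{p1}\in x_1\m\subseteq\m^2\subseteq\fkc'$. Checking $\fkc'K\subseteq\fkc'$ on the generators $1,f_1,\dots,f_\ell,g_1,\dots,g_m$ of $K$ now reduces exactly to these containments, so $\fkc'$ is an ideal of $S$; therefore $\fkc'S=\fkc'\subseteq R$, i.e.\ $\fkc'\subseteq R:S=\fkc$.

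For the reverse inclusion $\fkc\subseteq\fkc'$, I would use the $R$-module structure of $K/R$. Killing the free summand $T\cdot e_0$ of $F_0$ in $(\sharp)$ --- under which the $c$-columns of $\Bbb M$ go to $0$ --- exactness of $(\sharp)$ presents $K/R$ as $(T^{\oplus\ell}\oplus T^{\oplus m})/(J^{\oplus\ell}\oplus\n^{\oplus m})$, which, since $\fka\subseteq J$, is isomorphic as an $R$-module to $(R/\fkc')^{\oplus\ell}\oplus(R/\m)^{\oplus m}$ (cf.\ the proof of Theorem~\ref{3.2}). On the other hand $\fkc$ annihilates $K/R$, since $(\fkc K)S=\fkc S\subseteq R$ (using $KS=S$) gives $\fkc K\subseteq R:S=\fkc\subseteq R$. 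Because $\ell>0$, the summand $R/\fkc'$ genuinely occurs in $K/R$, and therefore $\fkc\subseteq\Ann_R(R/\fkc')=\fkc'$. Combining the two inclusions, $\fkc=\fkc'$, so $\ell_R(R/\fkc)=2$ and $R$ is a $2$-$\AGL$ ring by Theorem~\ref{mainref}.

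I expect the main obstacle to be the inclusion $\fkc\subseteq\fkc'$, i.e.\ ruling out the possibility that $\fkc$ is strictly larger than $\fkc'$. The length computation and the reading of the relation matrix in the first two paragraphs are in essence transcriptions of Theorem~\ref{3.2} and its proof and should be routine; what really carries the argument is that the hypothesis $a_{ij},b_{pk}\in J$ makes $\fkc'$ stable under multiplication by $K$, while the hypothesis $\ell>0$ forces $K/R$ to have an $R/\fkc'$ direct summand, thereby pinning the annihilator of $K/R$ down to $\fkc'$. The small integral-dependence step giving $\beta_{p1}\in\m$ is likewise needed, since without it $\fkc'$ need not be $S$-stable.
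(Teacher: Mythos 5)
Your proof is correct and follows essentially the same approach as the paper: you read off from the assumed form of $\Bbb M$ that the ideal $\fkc' = (x_1^2) + (x_2,\dots,x_n)$ satisfies $\fkc' K \subseteq \fkc'$, hence $\fkc' \subseteq R:S = \fkc$, and you exploit the hypothesis $\ell > 0$ to force $\fkc$ to sit inside $\fkc'$. The only cosmetic differences are (i) the paper reaches the equality $\fkc = \fkc'$ more quickly, by noting that $\ell > 0$ gives $\fkc \subsetneq \m$ and then invoking $\ell_R(R/\fkc') \le 2$ rather than unpacking the annihilator of the direct summand $R/\fkc'$ of $K/R$ as you do, and (ii) your integral-dependence argument for $\beta_{p1} \in \m$ is a valid workaround but is unnecessary once one remembers the presentation is minimal (as in Theorem \ref{3.2}), so every entry of $\Bbb M$ already lies in $\n$.
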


\begin{proof}
The presentation $(\sharp)$ gives rise to a presentation 
$$
F_1 \overset{\Bbb B}{\longrightarrow} G_0 \overset{\Bbb L}{\longrightarrow} K/R \longrightarrow 0
$$
of $K/R$, where 
$\Bbb L=\left[\begin{smallmatrix}
\bar{f_1} \bar{f_2} \cdots \bar{f_{\ell}} & \bar{g_1} \bar{g_2} \cdots \bar{g_m}\end{smallmatrix}\right]$ (here $\overline{*}$ denotes the image in $K/R$), and the matrix $\Bbb B$ has the form stated in the proof of Theorem \ref{3.2}. Hence$$
K/R \cong (T/J)^{\oplus \ell} \oplus (T/\n)^{\oplus m},
$$ 
so that $\n{\cdot}(K/R) \ne (0)$,  since $\ell > 0$. Therefore, $\fkc \subsetneq \m$. We set $J = (X_1^2)+(X_2, X_3, \ldots, X_n)$ and let $I = JR$. Then, since $a_{ik} \in J$, inside of  $K/R$ we get
$$
X_1^2{\cdot}f_i = \overline{a_{i1}} \ \ \text{and} \ \ X_k{\cdot}f_i = \overline{a_{ik}}
$$
for every $1 \le i \le \ell$ and $2 \le k \le n$. Hence, $X_1^2{\cdot}f_i, X_k{\cdot}f_i \in I$. We similarly have $X_k{\cdot}g_j \in I$ for all $1 \le j \le m$ and $2 \le k \le n$, because $b_{jk} \in J$. Moreover, $X_1^2{\cdot}g_j \in J$ for every $1 \le j \le m$. Thus, $IK \subseteq I$, whence $IS \subseteq I$, because $S = R[K]=K^q$ for $q \gg 0$. Therefore, $I \subseteq \fkc \subsetneq \m$, so that  $I = \fkc$, since $\ell_R(R/I) \le 2$. Thus, $\ell_R(R/\fkc) = 2$, and $R$ is a $2$-AGL ring by Theorem \ref{mainref}.
\end{proof}


As a consequence of Theorem \ref{3.4a}, we have the following.

\begin{cor} Let $(T,\n)$ be a regular local ring with $\dim T=n \ge 3$ and $\n = (X_1, X_2, \ldots, X_n)$.  Choose positive integers $\ell_1, \ell_2, \ldots, \ell_n >0$ so that $\ell_1\ge 2$ and set $\fka = \rmI_2
\left(\begin{smallmatrix}
X_1^2 & X_2 & \cdots & X_{n-1} & X_n  \\
X_2^{\ell_2} & X_3^{\ell_3} & \cdots & X_n^{\ell_n} & X_1^{\ell_1}\\
\end{smallmatrix}\right)$. Then $R=T/\fka$ is a $2$-$\AGL$ ring, for which $K/R$ is a free $R/\fkc$-module of rank $n-2$.
\end{cor}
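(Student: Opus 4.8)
The plan is to verify the hypotheses of Theorem~\ref{3.4a}. Put $Z_1=X_1^2$ and $Z_i=X_i$ for $2\le i\le n$, and $w_j=X_{j+1}^{\ell_{j+1}}$ for $1\le j\le n-1$, $w_n=X_1^{\ell_1}$, so that $\fka=\rmI_2\left(\begin{smallmatrix}Z_1&\cdots&Z_n\\ w_1&\cdots&w_n\end{smallmatrix}\right)$; set $J=(X_1^2)+(X_2,\ldots,X_n)=(Z_1,\ldots,Z_n)$. The one elementary fact that drives everything is that $w_j\in J$ for every $j$: for $j\le n-1$ because $w_j=X_{j+1}^{\ell_{j+1}}\in(X_{j+1})\subseteq J$, and for $j=n$ because $\ell_1\ge 2$ forces $w_n=X_1^{\ell_1}\in(X_1^2)\subseteq J$. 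I will exhibit a canonical fractional ideal $K$ of $R$ together with a presentation of the form $(\sharp)$ in which $m=q=0$, $\ell=n-2$, the ``$Z$-blocks'' equal $[\,X_1^2\ X_2\ \cdots\ X_n\,]$, and every $a_{ij}$ lies in $J$; Theorem~\ref{3.4a} then gives that $R$ is $2$-$\AGL$, and the shape $m=0$, $\ell=n-2$ yields the freeness of $K/R$.

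First, one checks that $\fka$ has grade $n-1$ (for instance, $\fka+(X_1)$ is $\n$-primary: reduce modulo $X_1,X_2,\ldots$ successively, using at each stage the $2\times2$ minors employing the column whose top entry has just become $0$). Being a determinantal ideal of maximal grade, $\fka$ is perfect, so $R$ is a one-dimensional Cohen--Macaulay local ring and the Eagon--Northcott complex $\Bbb F$ of $\left(\begin{smallmatrix}Z_1&\cdots&Z_n\\ w_1&\cdots&w_n\end{smallmatrix}\right)$ is a minimal $T$-free resolution of $R$. Dualizing $\Bbb F$ over $T$ gives a minimal $T$-free resolution of $\rmK_R=\Ext^{n-1}_T(R,T)$, whose presentation matrix is the transpose of the last differential of $\Bbb F$. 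Working out that (contraction) differential shows that $\rmK_R$ is minimally generated by elements $e_0,e_1,\ldots,e_{n-2}$ subject exactly to the relations
$$Z_j\,e_1+w_j\,e_0=0,\qquad Z_j\,e_{k+1}+w_j\,e_k=0\ \ (1\le k\le n-3),\qquad 1\le j\le n;$$
in particular $\mu_R(\rmK_R)=n-1$, so $\rmr(R)=n-1$, and every entry of this presentation matrix equals some $Z_j$ or $w_j$, hence lies in $J$.

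Next, realize $\rmK_R$ as a canonical fractional ideal $K$ with $R\subseteq K\subseteq\overline{R}$ so that $e_0\mapsto 1$ (possible, since $e_0$ is part of a minimal generating set and $\rmK_R$ is torsion-free); writing $f_k\in K$ for the image of $e_k$ we get $K=R+\sum_{k=1}^{n-2}Rf_k$, a minimal generation, so $\ell:=n-2=\rmr(R)-1$. The first relation gives $Z_jf_1=-w_j\in R$, and since $w_j\in J$ we may write $w_j=\sum_i c_{ij}Z_i$, so by induction on $k$ we get $w_jf_k\in R$ and hence $Z_jf_{k+1}=-w_jf_k\in R$; moreover these verifications place each $Z_jf_i$ inside $I:=J/\fka$. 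Thus $IK\subseteq I$, whence $I\cdot(K/R)=0$. As the presentation matrix of $K$ relative to $\{1,f_1,\ldots,f_{n-2}\}$ has all entries in $J$, removing its first row yields a presentation $\Bbb A'$ of $K/R$ with entries in $J$; combined with $I\cdot(K/R)=0$ this forces $\Im\Bbb A'=J\cdot T^{\oplus(n-2)}$, so $K/R\cong(T/J)^{\oplus(n-2)}$, free over $T/J$. Now choose generators $g_1,\ldots,g_{n-2}$ of $K$ (besides $1$) whose images realize the standard block-diagonal minimal presentation $\Bbb B$ of $(T/J)^{\oplus(n-2)}$ (namely $n-2$ copies of $T^n\xrightarrow{[\,X_1^2\ X_2\ \cdots\ X_n\,]}T\to T/J\to 0$): with respect to $\{1,g_1,\ldots,g_{n-2}\}$ the minimal presentation matrix $\Bbb M$ of $K$ has its lower $n-2$ rows equal to $\Bbb B$ and exactly $n(n-2)$ columns, so it is of the form $(\sharp)$ with $m=q=0$ and $\ell=n-2$; and as the change of generators from $\{1,f_k\}$ to $\{1,g_k\}$ fixes $1$, each row of $\Bbb M$ — in particular its top row — is a $T$-linear combination of rows of the earlier presentation matrix, all of whose entries lie in $J$. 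Hence $a_{ij}\in J$ for all $i,j$.

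Theorem~\ref{3.4a} now applies, so $R$ is a $2$-$\AGL$ ring. Finally $\ell_R(R/\fkc)=2$ by Theorem~\ref{mainref}, while $I\subseteq\fkc$ (as in the proof of Theorem~\ref{3.4a}, $IS=I$ for $S=R[K]$) and $\ell_R(R/I)=\ell_T(T/J)=2$; therefore $\fkc=I$, so $R/\fkc\cong T/J$ and $K/R\cong(R/\fkc)^{\oplus(n-2)}$ is $R/\fkc$-free of rank $n-2$. The step I expect to be the main obstacle is the middle of the second paragraph: making the last Eagon--Northcott differential of $\Bbb F$ explicit for general $n$ and checking that it produces precisely the relations displayed above — so that in particular $q=0$ and the only nonzero entries occurring are the $Z_j$ and $w_j$. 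For $n=3$ this is exactly the content of the Corollary to Theorem~\ref{3.2}; for larger $n$ it is a routine but somewhat lengthy bookkeeping with the Eagon--Northcott complex of a $2\times n$ matrix, and the grade assertion is equally standard.
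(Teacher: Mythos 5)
Your overall strategy is the same as the paper's: take the Eagon--Northcott resolution of $R$ over $T$, dualize to get a minimal presentation of $\rmK_R$, observe that all entries of the resulting matrix are among the $Z_j$ and $w_j$ and hence lie in $J=(X_1^2,X_2,\ldots,X_n)$ (the driving observation that $\ell_1\ge 2$ forces $w_n=X_1^{\ell_1}\in(X_1^2)$ is exactly the paper's), and feed this into Theorem~\ref{3.4a}. The part you defer (writing out the last Eagon--Northcott differential) is done explicitly in the paper via the matrix $\Bbb M'$, and your argument that a change of generating set fixing $1$ preserves entries-in-$J$ is sound.

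The genuine gap is the normalization step: you assert that $\rmK_R$ can be realized as a canonical fractional ideal $R\subseteq K\subseteq\overline{R}$ with $e_0\mapsto 1$, ``possible, since $e_0$ is part of a minimal generating set and $\rmK_R$ is torsion-free.'' That justification does not work: a fractional ideal containing $R$ and isomorphic to $\rmK_R$ need not lie inside $\overline{R}$ (for instance, starting from any realization $R\subseteq K\subseteq\overline{R}$ and dividing by a chosen minimal generator $u$ gives $1\in u^{-1}K$, but $u^{-1}K\subseteq\overline{R}$ only when $u$ is a unit of $\overline{R}$, which is not automatic just because $u\notin\fkm K$). And the condition $K\subseteq\overline{R}$ is needed downstream, since you — like Theorem~\ref{3.4a} — work with $S=R[K]=K^q$ for $q\gg0$ and $\fkc=R:S$, which requires $S$ to be module-finite over $R$. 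The paper closes precisely this gap by computing concretely: from the relations one reads off that the minimal generators must be $1, -y, y^2, \ldots, (-y)^{n-2}$ with $y=x_2^{\ell_2}/x_1^2$, and then one checks $y^n = x_1^{\ell_1-2}x_2^{\ell_2-1}\cdots x_n^{\ell_n-1}\in R$, so $y\in\overline{R}$ and $K=\sum_{i=0}^{n-2}Ry^i$ is a legitimate canonical fractional ideal. That integrality check is the missing content; once you actually carry out the Eagon--Northcott bookkeeping you defer, this is what you must extract from it before the rest of your argument (including the freeness of $K/R$ over $R/\fkc$) is on solid ground.
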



\begin{proof} Since $\sqrt{\fka + (X_1)} = \n$, $\grade_T\fka = n-1$, so that $\fka$ is a perfect ideal of $T$, whence $R=T/\fka$ is a Cohen-Macaulay local ring with $\dim R=1$, and a minimal $T$-free resolution of $R$ is given by the  Eagon-Northcott complex associated to the matrix $\left(\begin{smallmatrix}
X_1^2 & X_2 & \cdots & X_{n-1} & X_n  \\
X_2^{\ell_2} & X_3^{\ell_3} & \cdots & X_n^{\ell_n} & X_1^{\ell_1}\\
\end{smallmatrix}\right)$
(\cite{EN}). We take the $T$-dual of the resolution and get the following presentation
$$
T^{\oplus n(n-2)} \overset{{\Bbb M}'}{\to} T^{\oplus (n-1)} \overset{\varepsilon}{\to} K_R \to 0
$$
of the canonical module $K_R$ of $R$, where the matrix ${\Bbb M}'$ is given by
{\scriptsize
$$
{\Bbb M}'=\left[
\begin{smallmatrix}
X_2^{\ell_2} -X_3^{\ell_3} \cdots (-1)^nX_n^{\ell_n}(-1)^{n+1}X_1^{\ell_1}
 & 0 &  &  &  &  \\
X_1^2 -X_2 \cdots (-1)^{n+1}X_n
 & X_2^{\ell_2} -X_3^{\ell_3} \cdots (-1)^nX_n^{\ell_n}(-1)^{n+1}X_1^{\ell_1}
 &    &  &  &  \\
   &   & \ddots  & \\
   &   &  &  &  X_1^2 -X_2 \cdots (-1)^{n+1}X_n
 &  X_2^{\ell_2} -X_3^{\ell_3} \cdots (-1)^nX_n^{\ell_n}(-1)^{n+1}X_1^{\ell_1}
\\   
   &   &  &  &  0  & X_1^2 -X_2 \cdots (-1)^{n+1}X_n 
\end{smallmatrix}\right].
$$
} Let $x_i$ denote, for each $1 \le i\le n$, the image of $X_i$ in $R$. Since $x_1^2x_1^{\ell_1}= x_2^{\ell_2}x_n$, $x_i x_1^{\ell_1} = x_{i+1}^{\ell_{i+1}}x_n$ for every $2 \le i \le n-1$ and $x_1$ is a parameter of $R$, we have that every $x_i$ is a non-zerodivisor in $R$. We set $y = \frac{x_2^{\ell_2}}{x_1^{2}}$, and 
$$f_i = \begin{cases}
x_{i+1}^{\ell_{i+1}} & \text{if} \ \ 1 \le i \le n-1\\
x_1^{\ell_1} & \text{if}\  \ i = n
\end{cases}
\ \ \ \ g_i = \begin{cases}
x_1^{2} & \text{if} \ \ i=1\\
x_i & \text{if}\ \  2 \le i \le n
\end{cases}
.$$ Then $f_i = g_iy$ for all $1 \le i \le n$, so that $y^{n} = \frac{\prod_{i=1}^nf_i}{\prod_{i=1}^ng_i}= x_1^{\ell_1-2}x_2^{\ell_2 - 1} \cdots x_n^{\ell_n -1} \in R$. Hence, $y \in \overline{R}$. Let $K = \sum_{i=0}^{n-2}Ry^i$. Therefore, $R \subseteq K \subseteq \overline{R}$. We will show that $K$ is a canonical fractional ideal of $R$. Indeed, because  
$
[\begin{smallmatrix}
-1& y& -y^2& \cdots (-1)^{n-1}y^{n-2}
\end{smallmatrix}]{\cdot}{\Bbb M}' = {\mathbf 0}$, the $T$-linear map $\psi : T^{\oplus (n-1)} \to K$ defined by $\psi ({\mathbf e}_i)= (-1)^{i}y^{i-1}$ for $1 \le i \le n-1$ (here $\{{\mathbf e}_i\}_{1 \le i \le n-1}$ denotes the standard basis of $T^{\oplus n-1}$) factors through $K_R$. Let $\sigma : K_R \to K$ be the $R$-linear map such that $\psi = \sigma \varepsilon$. Then, $K = \Im \sigma$, and $\sigma$ is a monomorphism. Indeed, assume that $X = \Ker \sigma \ne (0)$, and choose $\fkp \in \Ass_RX$. Then, $(\rmK_R)_\fkp \cong \rmK_{R_\fkp}$, since $\fkp \in \Ass_R \rmK_R$, while $K_\fkp \cong R_\fkp$, since $K$ is isomprphic to  some $\m$-primary ideal of $R$ (here $\m$ denotes the maximal ideal of $R$). Consequently, we get the exact sequence $$0 \to X_\fkp \to \rmK_{R_\fkp} \to R_\fkp \to 0$$ of $R_\fkp$-modules, which  forces $X_\fkp=(0)$, because $\ell_{R_\fkp}(\rmK_{R_\fkp})= \ell_{R_\fkp}(R_\fkp)$. This is a contradiction. Thus, $\rmK_R \cong K$. We identify $\rmK_R = K$ and $\varepsilon = \psi$. Then, because $(X_1^{\ell_1}, X_2^{\ell_2}, \ldots, X_n^{\ell_n}) \subseteq (X_1^{2}, X_2, \ldots, X_n)$, the matrix ${\Bbb M}'$ is transformed with elementary column operations into the following matrix
$$
\Bbb M =\left[ 
\begin{smallmatrix}
a_{11} a_{12} \cdots a_{1n} & \cdots  & \cdots &  a_{n-2, 1} a_{n-2, 2} \cdots a_{n-2, n}  \\
X^2_1 X_2 \cdots X_n & 0  & \cdots  & 0    \\
0 & \ddots &   & 0    \\
\vdots &  & \ \ddots & \vdots   \\
0 & 0  & \cdots & X_1^2 X_2 \cdots X_n  \\
\end{smallmatrix}\right]
$$
with $a_{ij} \in (X_1^{\ell_1}, X_2^{\ell_2}, \ldots, X_n^{\ell_n})$, 
so that Theorem \ref{3.4a} shows $R$ is a $2$-$\AGL$ ring. Since $K/R \cong (T/(X_1^2, X_2, \ldots, X_n))^{\oplus n-2}$, $K/R$ is a free $R/\fkc$-module of rank $n-2$. 
\end{proof}


\section{$2$-${\rm AGL}$ rings obtained by fiber products}

In this section we study the problem of when certain fiber products, or more generally, quasi-trivial extensions of one-dimensional Cohen-Macaulay local rings are $2$-${\rm AGL}$ rings.

Let $R$ be a commutative ring and $I$ an ideal of $R$. For an element $\alpha \in R$, we set $A(\alpha)=R\oplus I$ as an additive group and define the multiplication on $A(\alpha)$ by
$$
(a, x) \cdot (b, y) = (ab, ay + bx + \alpha (xy))
$$
for  $(a, x), (b, y) \in A(\alpha)$. Then, $A(\alpha)$ forms a commutative ring which we denote by $A(\alpha) = R \overset{\alpha}{\ltimes} I$, and call it {\it the quasi-trivial extension of $R$ by $I$ with respect to $\alpha$}. We consider $A(\alpha)$ to be an $R$-algebra via the homomorphism $\xi : R \to A(\alpha), ~a \mapsto (a, 0)$. Therefore, $A(\alpha)$ is a ring extension of $R$, and $A(\alpha)$ is a finitely generated $R$-module, when $I$ is a finitely generated ideal of $R$. Notice that if $\alpha = 0$, then $A(0) = R \ltimes I$ is the ordinary idealization $I$ over $R$, introduced by M. Nagata \cite[Page 2]{N}, and $[(0) \times I]^2=(0)$ in $A(0)$. If $\alpha = 1$, then $A(1)$ is called in \cite{marco} the amalgamated duplication of $R$ along $I$, and $$A(1) \cong R \times_{R/I} R, \ (a,i) \mapsto (a, a+i),$$ the fiber product of the two copies of the natural homomorphism $R \to R/I$. Hence, if $R$ is a reduced ring, then so is $A(1)$.

Let us note the following.

\begin{lemma}\label{lemma 3.1}
Let $(R,\m)$ be a $($not necessarily Noetherian$)$ local ring. Assume that $I \ne R$ or $\alpha \in \m$. Then $A(\alpha)$ is a local ring with maximal ideal $\m \times I$.
\end{lemma}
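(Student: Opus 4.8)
The plan is to exhibit $\m\times I$ as a proper ideal of $A(\alpha)$ whose complement consists entirely of units; this forces $A(\alpha)$ to be local with $\m\times I$ as its unique maximal ideal.

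First I would check that $\m\times I$ is an ideal of $A(\alpha)$: for $(a,x)\in A(\alpha)$ and $(m,i)\in\m\times I$ we have $(a,x)(m,i)=(am,\ ai+mx+\alpha xi)$, and here $am\in\m$ while $ai,mx,xi\in I$, so the product again lies in $\m\times I$. It is a proper ideal, since it does not contain the identity element $(1,0)$ of $A(\alpha)$; in particular every element of $\m\times I$ is a non-unit. Moreover, the map $A(\alpha)\to R/\m$ sending $(a,x)$ to the residue class of $a$ is a surjective ring homomorphism with kernel $\m\times I$, so $A(\alpha)/(\m\times I)\cong R/\m$ is a field and $\m\times I$ is a maximal ideal of $A(\alpha)$.

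The one place where the hypothesis is used, and the only step requiring care, is the claim that every $(a,x)\in A(\alpha)$ with $a\notin\m$ is a unit. Since $R$ is local and $a\notin\m$, the element $a$ is a unit of $R$, and the key observation is that $a+\alpha x$ is then also a unit of $R$: if $I\ne R$ then $I\subseteq\m$, so $x\in\m$, hence $\alpha x\in\m$; and if instead $\alpha\in\m$, then $\alpha x\in\m$ as well. In either case $a+\alpha x=a(1+a^{-1}\alpha x)$ with $a^{-1}\alpha x\in\m$, hence a unit of $R$. Setting $b=a^{-1}$ and $y=-a^{-1}(a+\alpha x)^{-1}x\in I$, a direct computation gives
$$
(a,x)(b,y)=\bigl(ab,\ bx+(a+\alpha x)y\bigr)=\bigl(1,\ a^{-1}x-a^{-1}x\bigr)=(1,0),
$$
so $(a,x)$ is invertible, using that $A(\alpha)$ is commutative. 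Hence $A(\alpha)\setminus(\m\times I)$ consists of units; combined with the previous paragraph this shows that $\m\times I$ is exactly the set of non-units of $A(\alpha)$, and being an ideal it is therefore the unique maximal ideal.

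I expect no genuine obstacle here: apart from the unit claim for $a+\alpha x$ — which is precisely the point at which one invokes either $I\ne R$ (forcing $I\subseteq\m$) or $\alpha\in\m$ — everything reduces to a routine verification with the quasi-trivial multiplication.
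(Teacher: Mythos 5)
Your proof is correct and takes essentially the same route as the paper's: you produce the same inverse element $(a^{-1},\, -a^{-1}(a+\alpha x)^{-1}x)$ for any $(a,x)$ with $a\notin\m$, with the key step being that $a+\alpha x$ is a unit because $\alpha x\in\m$ under either hypothesis. The only differences are that you spell out the easy verifications the paper leaves implicit (that $\m\times I$ is an ideal, and that the quotient is a field).
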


\begin{proof}
Let $(a,x) \in A(\alpha) \setminus (\m \times I)$. Then, $a+\alpha x \not\in \m$, since $a \not\in \m$ but $\alpha x \in \m$. Therefore, setting $b = a^{-1}$ and $y = -(a+\alpha x)^{-1}{\cdot}xb$, we get $(a,x)(b,y)=1$ in $A(\alpha)$. Hence, $A(\alpha)$ is a local ring, because $\m \times I$ is an ideal of $A(\alpha)$.
\end{proof}

\begin{remark}
When $I=R$, $A(-1)$ is not a local ring, even if $(R,\m)$ is a local ring. In fact, assume that $A(-1)$ is a local ring. Then, because $\m \times R$ is a maximal ideal of $A(-1)$ and $(1,1) \not\in \m \times R$, we have $(1,1)(b,y)= (1,0)$ for some $(b,y) \in A(-1)$, so that $b=1$ and $y+b + (-1){\cdot}1{\cdot}y=0$. This is absurd. 
\end{remark}

In what follows, let $(R,\m)$ be a one-dimensional Cohen-Macaulay local ring with a canonical fractional ideal $K$. We set $S=R[K]$ and $\fkc = R:S$. Let $T$ be a birational module-finite extension of $R$ (hence $R \subseteq T \subseteq \overline{R}$), and assume that $K \subseteq T$ but $R \ne T$. We set $I = R:T$. Hence, $I = K:T$ by \cite[Lemma 3.5 (1)]{GMP}, so that $K:I=T$.

\begin{prop}\label{lemma 3.5}
$T/K \cong \rmK_{R/I}$. Hence, $\ell_R(T/K)= \ell_R(R/I)$. 
\end{prop}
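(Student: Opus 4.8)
The plan is to exploit the fact that $K$ is a canonical module for $R$ and that canonical modules behave well under taking ideals and under base change modulo an ideal whose quotient is Cohen-Macaulay of dimension zero. Concretely, since $R \subseteq K \subseteq T \subseteq \overline{R}$ and $I = R:T = K:T$, we have $T = K:I$ as recorded just before the statement. I would first argue that $R/I$ is a one-dimensional Cohen-Macaulay ring is \emph{not} what we need; rather $I$ is $\fkm$-primary (because $T$ is a birational module-finite extension and $R \neq T$ forces $I \neq R$, while $\Supp(R/I)$ is contained in the non-normal locus, hence $\ell_R(R/I) < \infty$), so $R/I$ is an Artinian (Cohen-Macaulay of dimension $0$) local ring. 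Then $\rmK_{R/I}$ exists and the natural candidate for it is $\Hom_R(R/I, \rmK_R) \cong \Hom_R(R/I, K)$.

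The key computation is therefore the identification $T/K \cong \Hom_R(R/I, K)$. For this I would use the standard description of $\Hom_R(R/I, K)$ inside $\rmQ(R)$: since $K$ is a fractional ideal with $K:I = T$, one has $\Hom_R(R/I,K) \cong (K:_{\rmQ(R)} I)/K = T/K$, where the first isomorphism sends a homomorphism to the image of $\overline{1}$ (this is the usual computation $\Hom_R(R/I, M) = \{x \in M : Ix = 0\}$ transported to the fractional setting, i.e. $\Hom_R(R/I, K) = (0 :_{\rmQ(R)/K \text{-part}} I)$, which is exactly $(K:I)/K$). Combining, $T/K \cong \Hom_R(R/I, K) \cong \Hom_R(R/I, \rmK_R)$, and the latter is $\rmK_{R/I}$ by the standard behaviour of canonical modules under quotient by an ideal generated by (here, vacuously, a maximal $R$-sequence in the annihilator, since $\depth R = 1$ and we need $\operatorname{grade}(I) \leq 1$); more precisely, for an $\fkm$-primary ideal $I$ in the one-dimensional Cohen-Macaulay local ring $R$, $\Hom_R(R/I, \rmK_R) \cong \rmK_{R/I}$ because $\rmK_R$ has a presentation compatible with reduction modulo a single non-zerodivisor, or by local duality directly. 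The length statement $\ell_R(T/K) = \ell_R(R/I)$ then follows because a module and its canonical module have the same length over an Artinian local ring (canonical module is Matlis dual there, and Matlis duality preserves length).

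The main obstacle — really the only point requiring care — is getting the functorial identification $\Hom_R(R/I, K) = (K:I)/K$ exactly right inside $\rmQ(R)$, and making sure the ``$\rmK_{R/I} \cong \Hom_R(R/I,\rmK_R)$'' step is invoked with the correct hypotheses (that $I$ contains a non-zerodivisor of $R$, equivalently $\operatorname{grade}_R I = 1 = \depth R$, so that reduction modulo $I$ — or rather modulo one parameter, followed by the Artinian case — behaves well). Everything else is bookkeeping. I would write the proof in three short steps: (i) $I$ is $\fkm$-primary so $R/I$ is Artinian and $\rmK_{R/I} = \Hom_R(R/I,\rmK_R) = \Hom_R(R/I,K)$; (ii) $\Hom_R(R/I,K) \cong (K:_{\rmQ(R)}I)/K = T/K$ since $K:I = T$; (iii) conclude $T/K \cong \rmK_{R/I}$ and hence $\ell_R(T/K) = \ell_R(\rmK_{R/I}) = \ell_R(R/I)$.
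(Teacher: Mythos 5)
Your plan is the right one and matches the paper's approach in spirit (dualize against $K$, identify colon ideals with duals, and finish with the Artinian duality for lengths), but as written it contains a concrete error: you systematically use $\Hom$ where $\Ext^1$ is required, and the two formulas you flag as ``the only point requiring care'' are both false as stated.

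Specifically, $\Hom_R(R/I,K)=(0:_K I)=0$, because $K\subseteq \overline{R}\subseteq \rmQ(R)$ is torsion-free and the $\fkm$-primary ideal $I$ contains a non-zerodivisor; hence also $\Hom_R(R/I,\rmK_R)=0$. So $\rmK_{R/I}\cong\Hom_R(R/I,\rmK_R)$ is wrong --- the correct formula for the canonical module of a zero-dimensional quotient of a one-dimensional Cohen--Macaulay local ring has a codimension shift: $\rmK_{R/I}\cong\Ext_R^1(R/I,\rmK_R)$. Likewise $\Hom_R(R/I,K)\neq(K:I)/K$; what you are actually computing when you ``transport to the fractional setting'' is $\Hom_R\bigl(R/I,\rmQ(R)/K\bigr)=(0:_{\rmQ(R)/K}I)=(K:I)/K$, and the isomorphism $\Hom_R(R/I,\rmQ(R)/K)\cong\Ext_R^1(R/I,K)$ is exactly the connecting map coming from $0\to K\to\rmQ(R)\to\rmQ(R)/K\to0$ (using that $\Ext_R^i(R/I,\rmQ(R))=0$ for all $i$, since some non-zerodivisor $a\in I$ acts as zero on $R/I$ and invertibly on $\rmQ(R)$). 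In short, your two wrong identities happen to compose to a true one, which is why your final conclusion $(K:I)/K\cong\rmK_{R/I}$ is correct, but the argument as written does not prove it.

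The paper does the bookkeeping cleanly: apply $\Hom_R(-,K)$ to $0\to I\to R\to R/I\to 0$ to obtain
$0\to K\to K:I\to\Ext_R^1(R/I,K)\to 0$,
using $\Hom_R(R/I,K)=0$ and $\Ext_R^1(R,K)=0$, and then $K:I=T$ gives $T/K\cong\Ext_R^1(R/I,K)=\rmK_{R/I}$. If you replace each of your $\Hom$'s with the appropriate $\Ext^1$ (equivalently, run the long exact sequence you already have in mind), your proof becomes the paper's proof. The length statement via Matlis duality over $R/I$ is fine.
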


\begin{proof}
Take the $K$-dual of the exact sequence $0 \to I \to R \to R/I \to 0$, and consider the resulting exact sequence $0 \to K \to K:I \to \Ext_R^1(R/I, K) \to 0$. We then have $T/K \cong \Ext_R^1(R/I, K) = \rmK_{R/I}$, since $K :I=T$. Therefore, $\ell_R(T/K) = \ell_R(\rmK_{R/I})= \ell_R(R/I)$.
\end{proof}

 Let $\alpha \in R$ and set $A = R \overset{\alpha}{\ltimes} I$. Then, since $I \ne R$, $A$ is a Cohen-Macaulay local ring with $\dim A=1$ and $\n = \m \times I$, the unique maximal ideal (Lemma \ref{lemma 3.1}).  We are now interested in the question of when $A$ is a $2$-$\AGL$ ring. Notice that we have the extensions 
$$A \subseteq T \overset{\alpha}{\ltimes} T \subseteq \rmQ(R) \overset{\alpha}{\ltimes} \rmQ(R)= \rmQ(A)$$
of rings. We set $L = T \times K$ in $T \overset{\alpha}{\ltimes} T$. Hence, $L$ is an $A$-submodule of $T \overset{\alpha}{\ltimes} T$, and  $A \subseteq L \subseteq \overline{A}$.

We begin with the following, which plays a key role in this section.

\begin{prop}\label{3.1} $L \cong \rmK_A$ and $A[L] = T \overset{\alpha}{\ltimes} T$. 
\end{prop}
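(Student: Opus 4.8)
The plan is to prove the two claims $L \cong \mathrm{K}_A$ and $A[L] = T \overset{\alpha}{\ltimes} T$ essentially independently, the first by identifying $L$ with a canonical module via the standard behavior of canonical modules under finite extensions, and the second by a direct computation of the ring generated by $L$ over $A$.

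First I would handle $A[L] = T \overset{\alpha}{\ltimes} T$. Since $L = T \times K$ as an $A$-submodule of $T \overset{\alpha}{\ltimes} T$, the subring $A[L]$ contains $A = R \overset{\alpha}{\ltimes} I$ and the elements $(t,0)$ for $t \in T$ and $(0,x)$ for $x \in K$; multiplying these in the quasi-trivial extension and using $S = R[K] = K^q$ for $q \gg 0$ one sees that $A[L]$ contains $S \times S$, hence $T \times T$ (since $K \subseteq T$ gives $S \subseteq T$, and in fact $R[L]$ already sees all of $T$ in the first coordinate because $T$ is a ring and $R[K] \subseteq T$), and conversely $A[L] \subseteq T \overset{\alpha}{\ltimes} T$ because $L \subseteq T \overset{\alpha}{\ltimes} T$ and the latter is a ring. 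The one point needing a little care is that the multiplication in $T \overset{\alpha}{\ltimes} T$ involves the $\alpha(xy)$ term, so I should check that $(T,0)\cdot(0,K)$ and $(0,K)\cdot(0,K)$ land inside $T \times T$; since $K \subseteq T$ and $T$ is closed under multiplication, $(0,x)(0,y) = (0,\alpha xy) \in 0 \times T$, so this is fine.

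For $L \cong \mathrm{K}_A$ I would use the fact that $A \to T \overset{\alpha}{\ltimes} T$ is a finite birational extension of one-dimensional Cohen-Macaulay rings, and compute the canonical module of $A$ by duality. The cleanest route: consider the short exact sequence $0 \to I \to R \to R/I \to 0$ (with $I = R:T$, and recall from the preamble $K:I = T$, $I = K:T$). Dualizing into $K$ gives $0 \to K \to T \to \mathrm{K}_{R/I} \to 0$ as in Proposition \ref{lemma 3.5}. I expect the argument to proceed by showing that $\mathrm{K}_A$, restricted appropriately, fits into the analogous sequence with $T \overset{\alpha}{\ltimes} T$ in place of $T$ and $A/\n_0$-type data in place of $R/I$, and then identifying the middle term with $L$. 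Alternatively — and this may be the smoother path — I would use that $\mathrm{K}_{T \overset{\alpha}{\ltimes} T} \cong \mathrm{Hom}_R(T \overset{\alpha}{\ltimes} T, \mathrm{K}_A)$ together with the known formula for canonical modules of trivial/quasi-trivial extensions: $\mathrm{K}_{R \overset{\alpha}{\ltimes} I} \cong \mathrm{K}_R \times \mathrm{Hom}_R(I,\mathrm{K}_R)$ as a module over $R \overset{\alpha}{\ltimes} I$, where the module structure encodes $\alpha$. Applying this with $\mathrm{K}_R = K$ and $\mathrm{Hom}_R(I,K) = K:I = T$ gives $\mathrm{K}_A \cong K \times T = L$ (up to swapping coordinates, which is harmless), and one checks the $A$-action matches.

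The main obstacle will be getting the module structure right in the quasi-trivial setting: the formula $\mathrm{K}_{R \ltimes I} \cong \mathrm{K}_R \oplus \mathrm{Hom}_R(I,\mathrm{K}_R)$ is classical for the idealization ($\alpha = 0$), but for general $\alpha$ the ring $A(\alpha)$ has a twisted multiplication, so I must verify that the natural $A(\alpha)$-module structure on $K \times T$ (coming from its embedding in $\mathrm{Q}(A)$) is the one that makes it a canonical module — i.e., that the $\alpha$-twist on the ring is compatible with the obvious $\alpha$-twisted action on $K \times T \subseteq \mathrm{Q}(R) \overset{\alpha}{\ltimes} \mathrm{Q}(R)$. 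Concretely I would verify $\ell_A(L/A) = \ell_R(T/R) + \ell_R(T/K)$ or an analogous length/duality bookkeeping to pin down that $L$ has the right rank and is reflexive, then invoke that a fractional ideal $L$ with $A \subseteq L \subseteq \overline A$, $L \cong$ (a module of the right rank satisfying $S_2$), and $\mathrm{Hom}_A(L,L) = A[L] = T \overset{\alpha}{\ltimes} T$ being Gorenstein on the punctured spectrum forces $L \cong \mathrm{K}_A$ — essentially the same mechanism used in the last paragraph of the proof of the displayed corollary above (the $\Ker\sigma$ argument), localized at associated primes.
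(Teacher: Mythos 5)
Your computation of $A[L] = T \overset{\alpha}{\ltimes} T$ is fine, though the detour through $S = R[K]$ is unnecessary: since $L \supseteq T\times\{0\}$ and $L \supseteq \{0\}\times K$, the products $(t,0)(0,y)=(0,ty)$ together with $TK = T$ (as $1\in K\subseteq T$) already give $L^2 \supseteq T\times T$, and the reverse inclusion holds because $T\overset{\alpha}{\ltimes}T$ is a ring containing $L$. This is essentially the paper's argument for that half.

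The gap is in $L\cong\rmK_A$. You correctly land on $\Hom_R(A,K)=\Hom_R(R\oplus I, K)\cong K\oplus(K:I)=K\oplus T$, which is the paper's starting point. But then you write ``one checks the $A$-action matches'' and ``up to swapping coordinates, which is harmless,'' while acknowledging this as ``the main obstacle'' without resolving it --- and that is exactly where all the content lies. The $A$-module structure $L=T\times K$ inherits from the ring $T\overset{\alpha}{\ltimes}T$, namely $(a,x)(b,y)=(ab,\,ay+bx+\alpha xy)$, is not the same formula as the natural $A$-action on $\Hom_R(A,K)$, and the coordinate swap is part of the twist, not a formality. The paper resolves this by first carrying $\Hom_R(A,K)$ onto an auxiliary module $Z=T\oplus T$ equipped with the twisted action $(a,x)\rightharpoonup(b,y)=((a+\alpha x)b,\,ay+bx)$ (the one for which $\sigma\colon \Hom_R(A,K)\to T\oplus K\subseteq Z$ becomes $A$-linear), and then exhibiting the explicit $T\overset{\alpha}{\ltimes}T$-module isomorphism $\psi\colon T\overset{\alpha}{\ltimes}T\to Z$, $(a,x)\mapsto(a+\alpha x,x)$, which restricts to a bijection of $T\times K$ onto itself and thereby transports the identification back to $L$ sitting inside $T\overset{\alpha}{\ltimes}T$. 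This $\psi$-step is what your argument is missing. Your proposed fallback --- rank/length bookkeeping plus ``$\Hom_A(L,L)=A[L]$ Gorenstein forces $L\cong\rmK_A$'' --- is not a valid criterion: for $A=k[[t^3,t^4,t^5]]$ and $L=\overline{A}=k[[t]]$ one has $\Hom_A(L,L)=k[[t]]$ Gorenstein and $L$ an MCM fractional ideal of rank one with $A\subseteq L\subseteq\overline{A}$, yet $L\not\cong\rmK_A$ since $\mu_A(L)=3$ while $\mu_A(\rmK_A)=2$.
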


\begin{proof}
Since $I = K:T$, $I^\vee \cong T$ where $(-)^{\vee} = \Hom_R(-, K)$, and we have the natural isomorphism
$$\sigma: 
A^{\vee} = \Hom_R(R \oplus I, K) \overset{\cong}{\to} I^{\vee} \oplus R^\vee \overset{\cong}{\to}  T \oplus K=L
$$
of $R$-modules. Let $Z = T \oplus T$. Then, the $R$-module $Z$ becomes a $T \overset{\alpha}{\ltimes} T$-module by the following action
$$
(a, x) \rightharpoonup (b, y) = \left((a+\alpha x)b, ay + bx\right)
$$
for each $(a, x) \in T \overset{\alpha}{\ltimes} T$ and $(b, y) \in Z$. It is routine to check that $L$ which is considered inside of $Z$ is an $A$-submodule of $Z$, and that the above $R$-isomorphism $\sigma : A^\vee \to L$ is actually an $A$-isomorphism. We now consider the homomorphism $\psi: T \overset{\alpha}{\ltimes} T  \to Z$ of $T \overset{\alpha}{\ltimes} T$-modules defined by $\psi (1) = (1,0)$. Then, $\psi$ is an isomorphism, since $\psi(a,x) = (a+\alpha x, x) $ for each $(a,x) \in T \overset{\alpha}{\ltimes} T$. Notice that for each $(a,x) \in T \overset{\alpha}{\ltimes} T$, $(a,x) \in T \times K$ if and only if $\psi (a,x) \in T \times K$. Therefore, $L$ which is considered inside of $T \overset{\alpha}{\ltimes} T$
 is isomorphic to $\rmK_A$, because $L$ which is considered inside of $Z$ is isomorphic to $A^\vee  = \rmK_A$. Since $KT=T$, we have $L^n = T \overset{\alpha}{\ltimes} T$ for every $n \ge 2$. Thus, $A[L] = T \overset{\alpha}{\ltimes} T$, since $A[L]= \bigcup_{\ell \ge 1}L^\ell = L^n$ for $n \gg 0$. 
\end{proof}

Let $\rmr_R(I) = \ell_R(\Ext_R^1(R/\m,I))$ denote the Cohen-Macaulay type of the $R$-module $I$. 

\begin{cor}\label{type}
$\rmr(A(\alpha))= \mu_R(T)+\rmr(R)= \rmr_R(I) + \mu_R(K/I)$. Hence, the Cohen-Macaulay type of $A(\alpha)$ is independent of the choice of $\alpha \in R$. 
\end{cor}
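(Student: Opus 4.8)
The plan is to compute the Cohen-Macaulay type of $A=A(\alpha)$ in two ways, namely as $\ell_R(\Ext^1_A(A/\n, A))$ realized through the canonical module, and directly in terms of the $R$-module structure. The starting point is Proposition \ref{3.1}, which identifies $L = T \overset{\alpha}{\ltimes} T$ as a canonical fractional ideal $\rmK_A$ with $A[L] = T \overset{\alpha}{\ltimes} T$. Since for a one-dimensional Cohen-Macaulay local ring the type equals the minimal number of generators of the canonical module, I would aim to prove $\rmr(A) = \mu_A(L)$ and then compute $\mu_A(L)$ by reducing modulo $\n = \m \times I$.

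The key computation is $\mu_A(L) = \ell_A(L/\n L)$. Here $L = T \times K$ sits inside $Z = T\oplus T$ with the twisted $A$-action described in the proof of Proposition \ref{3.1}, under which $(a,x)\rightharpoonup(b,y) = ((a+\alpha x)b, ay+bx)$. So $\n L$ is generated by the elements $(a,x)\rightharpoonup (b,y)$ with $(a,x)\in \m\times I$ and $(b,y)\in T\times K$; a direct inspection shows $\n L = \m T \times (IT + \m K) = \m T \times (\fkm K)$, using that $I \subseteq \fkm$ and $IT \subseteq I \subseteq \m K$ (indeed $IT = I$ since $I = R:T$ is an ideal of $T$, and $I\subseteq \m \subseteq \m K$). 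Hence $L/\n L \cong (T/\m T)\oplus (K/\m K)$ as vector spaces over $A/\n = R/\m$, giving $\mu_A(L) = \mu_R(T) + \mu_R(K)$. Since $K\cong \rmK_R$ we have $\mu_R(K) = \rmr(R)$, which yields the first equality $\rmr(A) = \mu_R(T) + \rmr(R)$.

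For the second equality $\mu_R(T) + \rmr(R) = \rmr_R(I) + \mu_R(K/I)$, I would use the identification $I^\vee \cong T$ (from $I = K:T$) together with the exact sequence $0 \to I \to K \to K/I \to 0$ of $R$-modules. Applying $(-)^\vee = \Hom_R(-,K)$ and using that $K$ is the canonical module, one gets $0 \to (K/I)^\vee \to K^\vee \to I^\vee \to \Ext^1_R(K/I,K) \to 0$, i.e. $0 \to (K/I)^\vee \to R \to T \to \rmK_{R/I}' \to 0$ for the appropriate Ext-term; since $K/I$ has finite length, $(K/I)^\vee$ also has finite length while sitting inside $R$, forcing $(K/I)^\vee = 0$, so $T/R \cong \Ext^1_R(K/I, K)$ which is the Matlis dual of $K/I$ over $R/\fkm$-... more carefully, $\mu_R(T)$ relates to $\rmr_R(I) = \mu_R(I^\vee) = \mu_R(T)$ directly, so actually $\rmr_R(I) = \mu_R(T)$; and the remaining identity $\rmr(R) = \mu_R(K/I) + \bigl(\mu_R(T) - \rmr_R(I)\bigr)$ collapses once $\mu_R(T) = \rmr_R(I)$ is in hand, leaving $\rmr(R) = \mu_R(K/I)$? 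That cannot be right in general, so I would instead read the middle equality as the statement that $\mu_R(T) + \mu_R(K) = \rmr_R(I) + \mu_R(K/I)$ and prove it via the exact sequence $0 \to I \to K \to K/I \to 0$: this gives $\mu_R(K) \le \mu_R(I) + \mu_R(K/I)$, and the dual sequence controls the discrepancy. The main obstacle is exactly this bookkeeping: pinning down how the minimal generators of $K$ split between those coming from $I$ (counted by $\rmr_R(I) = \mu_R(I^\vee) = \mu_R(T)$ via local duality, since $\Ext^1_R(R/\fkm, I)^\vee \cong I^\vee/\fkm I^\vee$) and those surviving in $K/I$, which is where the equality $\mu_R(K) + \mu_R(T) = \rmr_R(I) + \mu_R(K/I) + \mu_R(T)$... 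I would ultimately settle this by the clean argument: $\rmr_R(I) = \mu_R(\Hom_R(I,K)) = \mu_R(T)$ by local duality applied to the module $I$ over the one-dimensional Cohen-Macaulay ring $R$, and separately $\rmr(R) = \mu_R(K)$; then the asserted identity $\mu_R(T) + \rmr(R) = \rmr_R(I) + \mu_R(K/I)$ becomes $\mu_R(K) = \mu_R(K/I)$, which holds because $I \subseteq \fkm K$ (as $I \subseteq \fkm$ and $1 \in K$ forces... no). I expect the genuinely delicate point to be verifying $I \subseteq \fkm K$, equivalently that the surjection $K \twoheadrightarrow K/I$ is a minimal-generator-preserving map, i.e. $I \subseteq \fkm K$; this follows since $I = R:T$ with $R \subsetneq T$ gives $I \subseteq \fkm$, and as $K \supseteq R$ we only need $I \subseteq \fkm \subseteq \fkm K$, using $1\in K$. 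Thus the whole corollary reduces to Proposition \ref{3.1}, the vector-space splitting $L/\n L \cong T/\fkm T \oplus K/\fkm K$, local duality $\rmr_R(I)=\mu_R(I^\vee)=\mu_R(T)$, and the containment $I\subseteq\fkm K$, and the hard part is assembling these without sign or indexing errors.
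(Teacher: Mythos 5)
Your proof of the first equality $\rmr(A(\alpha)) = \mu_R(T) + \rmr(R)$ is essentially the paper's: compute $\n L$ inside $L = T \times K$, show it equals $\m T \times \m K$ (the intermediate expression $\m T \times (IT + \m K)$ collapses to this since $IT = I \subseteq \m \subseteq \m K$), and conclude $\mu_A(L) = \mu_R(T) + \mu_R(K)$ via the splitting of $L/\n L$ over $A/\n = R/\m$. For the second equality, however, your route is genuinely different from the paper's, and, once you settle on it after the wavering in the middle of the passage, it is correct and arguably cleaner. The paper computes $\rmr(A)$ a \emph{second time} by invoking the formula $\rmr(R \ltimes I) = \rmr_R(I) + \mu_R(K/I)$ from [GKL, Theorem~3.3], after verifying the hypothesis $\sum_{f \in \Hom_R(I,K)} f(I) = (K:I)I = TI = I$; both computations give $\rmr(A)$ and the equality of the two right-hand sides is a byproduct. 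You instead prove the identity $\mu_R(T) + \rmr(R) = \rmr_R(I) + \mu_R(K/I)$ term by term: since $I$ is MCM, $\rmr_R(I) = \mu_R(\Hom_R(I,K)) = \mu_R(K:I) = \mu_R(T)$ by canonical duality (using $K:I = T$, which follows from $I = K:T$ and $K$-reflexivity of $T$), while $\rmr(R) = \mu_R(K) = \mu_R(K/I)$ because $I \subseteq \m \subseteq \m K$. This avoids the external reference at the cost of the (standard but worth stating explicitly) duality fact that the type of an MCM module equals the minimal number of generators of its canonical dual. One stylistic caution: the long exploratory detour through the dualized exact sequence $0 \to (K/I)^\vee \to K^\vee \to I^\vee \to \cdots$ ends up unused, and the self-doubting asides ("That cannot be right in general," "forces\ldots no") would need to be excised before this could pass as a proof rather than a sketch; the final clean argument you arrive at is the right one.
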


\begin{proof} With the same notation as in Proposition \ref{3.1}, because $\n L = (\m \times I)(T \times K) = \m T \times \m K$, we have an $R$-isomorphism $L/\n L \cong T/\m T \oplus K/\m K$. Therefore,  since $R/\m = A/\n$, $\rmr(A) = \mu_R(T) + \rmr(R)$, which is independent of $\alpha$. Consequently, because $\sum_{f \in \Hom_R(I,K)}f(I)= (K:I)I = TI =I$ where the second equality follows from the fact that $I = K:T$,  by \cite[Theorem 3.3]{GKL} we get $\rmr(A) = \rmr_R(I) + \mu_R(K/I)$.
\end{proof}

We now come to the main result of this section.

\begin{thm}\label{3.2b} With the same notation as above, the following conditions are equivalent.
\begin{enumerate}[$(1)$]
\item The fiber product $R \times_{R/I}R$ is a $2$-$\AGL$ ring.
\item The idealization $R \ltimes I$ is a $2$-$\AGL$ ring.
\item $A(\alpha)=R \overset{\alpha}{\ltimes} I$ is a $2$-$\AGL$ ring for every $\alpha \in R$.
\item $A(\alpha)=R \overset{\alpha}{\ltimes} I$ is a $2$-$\AGL$ ring for some $\alpha \in R$.
\item $\ell_R(T/K)=2$.
\item $\ell_R(R/I)=2$.
\end{enumerate}
\end{thm}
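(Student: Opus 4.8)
The plan is to establish the chain $(1)\Leftrightarrow(2)\Leftrightarrow(3)\Leftrightarrow(4)$ via Proposition \ref{3.1}, and then close the loop through $(5)$ and $(6)$, which are linked by Proposition \ref{lemma 3.5}. The engine of the argument is the characterization of $2$-$\AGL$ rings in Theorem \ref{mainref}, applied in the form ``$\ell_R(S/K)=2$'' (condition (6) there), transported to the extension $A(\alpha)$ via the explicit description of its canonical fractional ideal furnished by Proposition \ref{3.1}.

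First I would record that, by Proposition \ref{3.1}, $L = T\times K$ is a canonical fractional ideal of $A = A(\alpha)$ with $A[L] = T\overset{\alpha}{\ltimes} T$. Therefore, by Theorem \ref{mainref}(6) applied to $A$, the ring $A(\alpha)$ is $2$-$\AGL$ if and only if $\ell_A\big((T\overset{\alpha}{\ltimes} T)/(T\times K)\big) = 2$. Now the quotient $(T\overset{\alpha}{\ltimes} T)/(T\times K)$ is, as an additive group, $(T/T)\oplus(T/K) = T/K$, and the $A$-module structure factors through $R$ (the ideal $(0)\times I$ annihilates it, and the $\alpha$-twist disappears on the first, vanishing, component); hence $\ell_A\big((T\overset{\alpha}{\ltimes} T)/L\big) = \ell_R(T/K)$, \emph{independently of $\alpha$}. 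This simultaneously gives $(3)\Leftrightarrow(5)$, $(4)\Leftrightarrow(5)$, and hence $(3)\Leftrightarrow(4)$; note $(1)$ and $(2)$ are the special cases $\alpha=1$ and $\alpha=0$ of $(3)$--$(4)$, so $(1)\Leftrightarrow(2)\Leftrightarrow(3)\Leftrightarrow(4)\Leftrightarrow(5)$ follows at once. Finally, $(5)\Leftrightarrow(6)$ is exactly Proposition \ref{lemma 3.5}, which gives $T/K\cong \rmK_{R/I}$ and hence $\ell_R(T/K)=\ell_R(R/I)$.

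The step that needs genuine care — and where I expect the only real subtlety to lie — is verifying that the computation of $\ell_A\big((T\overset{\alpha}{\ltimes} T)/L\big)$ really is insensitive to $\alpha$, i.e. that the $A$-module in question is annihilated by $(0)\times I$ and that the residual $R$-action matches the one on $T/K$. Here one uses that the multiplication in $A(\alpha)$ sends $(0,i)\cdot(b,y) = (0,\, iy)$ with $i\in I$ and $y\in T$, and $IT = I \subseteq K$ because $I = K:T$ and $K:I = T$; so $(0)\times I$ does kill the quotient, and the $\alpha$-term $\alpha(xy)$ never contributes since it lands in the first coordinate which is already zero in $T/T$. With that in hand the length identity is immediate, and the theorem is proved. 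One should also remark that the hypotheses ``$K\subseteq T$ but $R\ne T$'' guarantee $I\ne R$, so Lemma \ref{lemma 3.1} applies and $A(\alpha)$ is indeed a one-dimensional Cohen-Macaulay local ring to which Theorem \ref{mainref} may be applied.
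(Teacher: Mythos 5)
Your proposal follows the paper's argument exactly: identify $L=T\times K$ as a canonical fractional ideal of $A(\alpha)$ with $A[L]=T\overset{\alpha}{\ltimes}T$ (Proposition~\ref{3.1}), observe $A[L]/L\cong T/K$ with $\ell_A(A[L]/L)=\ell_R(T/K)$ independently of $\alpha$, invoke Theorem~\ref{mainref}(6), and close the loop via Proposition~\ref{lemma 3.5}. This is the same approach.

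One detail is stated incorrectly, though it is not fatal. You wrote $(0,i)\cdot(b,y)=(0,iy)$ and asserted that the $\alpha$-term ``lands in the first coordinate''; in fact the multiplication gives $(0,i)\cdot(b,y)=(0,\,bi+\alpha(iy))$, so $\alpha(iy)$ sits in the \emph{second} coordinate. The quotient is nevertheless killed by $(0)\times I$ because $bi+\alpha iy\in I$ (using $IT=I$), and for $(a,x)\in A$ acting on a class $(0,y)$ one gets $(a,x)\cdot(0,y)=(0,ay+\alpha xy)$ with $\alpha xy\in xT\subseteq K$ (since $x\in I=K:T$), so modulo $K$ only $ay$ survives. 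This is the correct reason the induced module structure factors through $A\to R$, $(a,x)\mapsto a$, and is independent of $\alpha$; the paper sidesteps the verification altogether by simply noting the additive isomorphism $A[L]/L\cong T/K$ is $R$-linear and using $A/\fkn=R/\fkm$ to pass between $A$-length and $R$-length.
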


\begin{proof}
We maintain the same notation as in Proposition \ref{3.1}. Since $A[L] = T \overset{\alpha}{\ltimes} T$, $A[L]/L\cong T/K$ as an $R$-module, so that $\ell_A(A[L]/L)= \ell_R(T/K)$, because $R/\m = A/\n$. Thus, the assertion readily follows from Proposition \ref{lemma 3.5}, Theorem \ref{mainref}, and Proposition \ref{2.3a}.
\end{proof}

\begin{cor}\label{3.3}
Suppose that $R$ is a $2$-$\AGL$ ring. If $A(\alpha)=R \overset{\alpha}{\ltimes} I$ is a $2$-$\AGL$ ring for some $\alpha \in R$, then $T=S$ and $I=\fkc$.\end{cor}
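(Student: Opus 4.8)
The plan is to convert the two $2$-$\AGL$ hypotheses into length statements along the chain $K \subseteq S \subseteq T$ and then conclude by additivity of length. The one structural fact that makes this work is that $T$ is, by assumption, a \emph{ring} (a module-finite birational extension of $R$), not merely an $R$-module, and it contains both $R$ and $K$; therefore it contains the $R$-subalgebra $R[K]=S$ generated by $K$. So I would begin by recording the containments
$$
R \subseteq K \subseteq S \subseteq T \subseteq \overline{R},
$$
in which $T/S$ has finite length.

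Next I would feed in the hypotheses through the characterizations already available. Since $R$ is a $2$-$\AGL$ ring, the equivalence of conditions $(1)$ and $(6)$ in Theorem~\ref{mainref} gives $\ell_R(S/K)=2$. Since $A(\alpha)=R\overset{\alpha}{\ltimes}I$ is a $2$-$\AGL$ ring for some $\alpha\in R$, the equivalence of conditions $(4)$ and $(5)$ in Theorem~\ref{3.2b} gives $\ell_R(T/K)=2$. From $K\subseteq S\subseteq T$ and additivity of length we then get
$$
\ell_R(T/S)=\ell_R(T/K)-\ell_R(S/K)=0,
$$
so $S=T$. Finally $I=R:T=R:S=\fkc$ by the very definition of $\fkc$, which is the claimed assertion.

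I do not expect a real obstacle here: the statement is a short consequence of Theorems~\ref{mainref} and \ref{3.2b}. The only points that need care are that $T$ is assumed to be a subring of $\overline{R}$, so that $S=R[K]\subseteq T$ is legitimate, and that the lengths involved are finite — but this is automatic, since the value $2$ is part of the very conditions being quoted. Everything else is formal.
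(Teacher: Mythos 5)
Your proof is correct and takes essentially the same route as the paper's: both use $S=R[K]\subseteq T$ together with the two length equalities $\ell_R(S/K)=2$ (from Theorem~\ref{mainref}) and $\ell_R(T/K)=2$ (from Theorem~\ref{3.2b}) to force $S=T$, after which $I=R:T=R:S=\fkc$ is immediate from the definitions.
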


\begin{proof}
We have $S = R[K] \subseteq T$, since $K \subseteq T$. Therefore, $S=T$, because $\ell_R(T/K)=\ell_R(S/K)=2$  by Theorems \ref{mainref} and \ref{3.2b}. 
\end{proof}

Choosing $T=S$, we have the following. The equivalence of assertions (2) and (3) covers \cite[Theorem 4.2]{CGKM}. We should compare the result with \cite[Theorem 6.5]{GMP} for the assertion on AGL rings. 

\begin{cor}\label{3.4} 
Let $R$ be a one-dimensional Cohen-Macaulay local ring with a canonical fractional ideal $K$ and assume that $R$ is not a Gorenstein ring. We set $S=R[K]$ and $\fkc = R:S$. Then the following conditions are equivalent.
\begin{enumerate}[$(1)$]
\item $R \times_{R/\fkc}R$ is a $2$-$\AGL$ ring.
\item $R \ltimes \fkc$ is a $2$-$\AGL$ ring.
\item $R$ is a $2$-$\AGL$ ring.
\end{enumerate}
\end{cor}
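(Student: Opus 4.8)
The plan is to deduce this corollary directly from Theorem \ref{3.2b} by specializing the birational extension $T$ to $S = R[K]$. First I would check that this specialization is legitimate: the running hypotheses for Theorem \ref{3.2b} require a birational module-finite extension $T$ with $K \subseteq T$ and $R \ne T$. Since $R$ is assumed non-Gorenstein, $K \ne R$, hence $S = R[K] \supsetneq R$, and $S$ is module-finite and birational over $R$ by the standard facts recalled in the introduction; of course $K \subseteq R[K] = S$. So $T := S$ is an admissible choice, and then $I = R : S = \fkc$ by definition. With this identification the fiber product $R \times_{R/I} R$ becomes $R \times_{R/\fkc} R$, the idealization $R \ltimes I$ becomes $R \ltimes \fkc$, and conditions (1)--(4) of Theorem \ref{3.2b} specialize exactly to conditions (1) and (2) here (together with the quasi-trivial versions, which we simply do not list).

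Next I would identify condition (6) of Theorem \ref{3.2b}, namely $\ell_R(R/I) = 2$, with ``$R$ is a $2$-$\AGL$ ring''. Since $I = \fkc$ and $R$ is not Gorenstein, this is precisely the equivalence (1)$\Leftrightarrow$(7) of Theorem \ref{mainref}: a non-Gorenstein one-dimensional Cohen-Macaulay local ring $R$ with canonical fractional ideal $K$, $S = R[K]$, $\fkc = R : S$, is a $2$-$\AGL$ ring if and only if $\ell_R(R/\fkc) = 2$. Combining, Theorem \ref{3.2b} gives
$$
(1)\ \Longleftrightarrow\ (2)\ \Longleftrightarrow\ \ell_R(R/\fkc)=2\ \Longleftrightarrow\ R\ \text{is a}\ 2\text{-}\AGL\ \text{ring},
$$
which is exactly the chain of equivalences (1)$\Leftrightarrow$(2)$\Leftrightarrow$(3) asserted in the corollary.

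I do not expect any serious obstacle here: the corollary is a pure specialization of the already-proved Theorem \ref{3.2b}. The only point requiring a word of care is the non-Gorenstein hypothesis, which is needed twice — once to guarantee $S \ne R$ so that Theorem \ref{3.2b} applies with $T = S$, and once to invoke the equivalence with $\ell_R(R/\fkc)=2$ in Theorem \ref{mainref}, since that characterization is stated for non-Gorenstein $R$. One should also remark, for the reader's orientation, that the equivalence of (2) and (3) recovers \cite[Theorem 4.2]{CGKM} and parallels \cite[Theorem 6.5]{GMP} in the almost Gorenstein setting; this is the content of the sentence preceding the statement, and no further argument is needed.
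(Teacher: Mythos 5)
Your proposal is correct and matches the paper's argument exactly: the paper proves Corollary \ref{3.4} by the single remark ``Choosing $T=S$,'' i.e., by specializing Theorem \ref{3.2b} to $T = S = R[K]$ (so $I = \fkc$) and invoking the equivalence $(1)\Leftrightarrow(7)$ of Theorem \ref{mainref} to identify $\ell_R(R/\fkc)=2$ with the $2$-$\AGL$ property. You have merely spelled out the routine verifications (that $T=S$ is admissible, and that the non-Gorenstein hypothesis guarantees $S \ne R$) that the paper leaves implicit.
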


We note one example.

\begin{ex}\label{3.5}
Let $k$ be a field and set $R=k[[t^4, t^7, t^9]]$. Then $K=R+Rt^5$, so that $R$ is an AGL ring with $\rmr(R)=2$, because $\m K \subseteq R$ (\cite[Theorem 3.11]{GMP}). Hence $\fkc = \m$.  Let $T = k[[t^4, t^5, t^6, t^7]]$. Then, $T = R + Rt^5+Rt^6$, and $I=R:T = (t^7, t^8, t^9)$. Therefore, because $\ell_R(R/I) = 2$, by Theorem \ref{3.2b} and Corollary \ref{type} $A(\alpha) = R \overset{\alpha}{\ltimes} I$ is a $2$-$\AGL$ ring with $\rmr(A(\alpha))= \mu_R(T) + \rmr(R)=3+2=5$ for every $\alpha \in R$. In particular, $R \times_{R/I} R$ and $R \ltimes I$ are $2$-$\AGL$ rings. 
\end{ex}


\section{Two-generated Ulrich ideals in $2$-${\rm AGL}$ rings}

In this section, we explore Ulrich ideals in $2$-${\rm AGL} $ rings, mainly two-generated ones. One can find in \cite{GIK}, for arbitrary Cohen-Macaulay local rings of dimension one, a beautiful and complete theory of Ulrich ideals which are not two-generated.

First of all, let us briefly recall the definition of Ulrich ideals. The notion of Ulrich ideal was given by \cite{GOTWY} in arbitrary dimension. Although we will soon restrict our attention on the one-dimensional case, let us give it for arbitrary dimension. So, let $(R, \m) $ be a Cohen-Macaulay local ring with $\dim R=d \ge 0$, and $I$ an $\m$-primary ideal of $R$. We assume that $I$ contains a parameter ideal $Q$ of $R$ as a reduction.

\begin{defn} (\cite[Definition 1.1]{GOTWY})\label{2.1}
We say that $I$ is an {\it Ulrich ideal} in $R$, if  the following conditions are satisfied.
\begin{enumerate}[$(1)$]
\item $I \ne Q$, but $I^2=QI$.
\item $I/I^2$ is a free $R/I$-module.
\end{enumerate}
\end{defn}

\noindent
In Definition \ref{2.1}, condition $(1)$ is equivalent to saying that the associated graded ring $\gr_I(R) = \bigoplus_{n\ge 0} I^n/I^{n+1}$ is a Cohen-Macaulay ring with $\rma(\gr_I(R))=1-d$, where $\rma(\gr_I(R))$ denotes the $\rma$-invariant of $\gr_I(R)$. Therefore, condition $(1)$ of Definition \ref{2.1} is independent of the choice of reductions $Q$ of $I$. When $I=\m$, condition $(2)$ is automatically satisfied, and condition $(1)$ is equivalent to saying that $R$ has minimal multiplicity greater than one.

Here let us summarize a few basic result on Ulrich ideals, which we later use in this section. To state them, we need the notion of G-dimension. For the moment, let $R$ be a Noetherian ring. A {\it totally reflexive} $R$-module is by definition a finitely generated reflexive $R$-module $G$ such that $\Ext_R^{p}(G,R) = (0)$ and $\Ext^{p}_R(\Hom_R(G,R),R)=(0)$ for all $p >0$. Note that every finitely generated free $R$-module is totally reflexive. The {\it Gorenstein dimension} (G-dimension for short) of a finitely generated $R$-module $M$, denoted by ${\rm G}$-${\rm dim}_RM$, is defined as the infimum of integers $n \ge  0$ such that there exists an exact sequence
$$0 \to G_n \to G_{n-1} \to  \cdots \to G_0 \to M \to  0$$
of $R$-modules with each $G_i$ totally reflexive.
A Noetherian local ring $R$ is called G{\it-regular}, if every totally reflexive $R$-module is free. This is equivalent to saying that the equality ${\rm G}$-${\rm dim}_RM = \pd_RM$ holds true for every finitely generated $R$-modules $M$ (\cite{greg}).

\begin{prop}[{\cite[Theorem 2.5, Corollary 2.13, Theorem 2.8]{GTT2}}]\label{Ulrich} Let $I$ be an Ulrich ideal in $R$ and set $n = \mu_R(I)$. Then the following assertions hold true.
\begin{enumerate}[${\rm (1)}$]
\item[${\rm (1)}$] $(n-d){\cdot}\rmr(R/I)= \rmr(R)$.
\item[${\rm (2)}$] Suppose that there exists an exact sequence $0 \to R \to \rmK_R \to C \to 0$ of $R$-modules where $\rmK_R$ denotes the canonical module of $R$. If $n \ge d+2$, then $\Ann_RC \subseteq I$.
\item[${\rm (3)}$] $n= d+1$ if and only if ${\rm G}$-${\rm dim}_RR/I < \infty$.
\end{enumerate}
\end{prop}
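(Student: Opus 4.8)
The plan is to reduce all three assertions to the Artinian, square-zero situation and then exploit the rigid module structure that the Ulrich condition imposes. Let $Q=(a_1,\dots,a_d)$ be a minimal reduction of $I$, put $\overline{R}=R/Q$ and $\overline{I}=I/Q$. Since $Q$ is generated by an $R$-regular sequence, $\overline{R}$ is Artinian, $\overline{I}^2=(0)$ (as $I^2=QI$), $\overline{R}/\overline{I}=R/I$, $\rmr(\overline{R})=\rmr(R)$, $K_R/QK_R\cong K_{\overline{R}}$, and $\operatorname{G-dim}_R(R/I)<\infty$ if and only if $\operatorname{G-dim}_{\overline{R}}(R/I)<\infty$ (the two differ by $d$). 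The Ulrich condition forces $\overline{I}=\overline{I}/\overline{I}^2$ to be $\overline{R}/\overline{I}$-free of rank $m:=\mu_R(I)-d=n-d$, so $\overline{I}\cong (R/I)^{\oplus m}$ as $\overline{R}$-modules. I would then record two consequences to use everywhere: $\Ann_{\overline{R}}(\overline{I})=\Ann_{\overline{R}}((R/I)^{\oplus m})=\Ann_{\overline{R}}(R/I)=\overline{I}$, and, writing $\overline{I}=\overline{R}a_1\oplus\dots\oplus\overline{R}a_m$ with each $\overline{R}a_i\cong R/I$, one gets $(0:_{\overline{R}}a_i)=\overline{I}$ for every $i$.

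I would then dispatch (1) and (3) as follows. For (1): every $x\in\operatorname{soc}(\overline{R})$ satisfies $x\overline{I}\subseteq x\m=(0)$, so $x\in\Ann_{\overline{R}}(\overline{I})=\overline{I}$; hence $\operatorname{soc}(\overline{R})=\Hom_{\overline{R}}(R/\m,\overline{I})\cong\operatorname{soc}(R/I)^{\oplus m}$, and comparing $k$-dimensions gives $\rmr(R)=m\cdot\rmr(R/I)=(n-d)\rmr(R/I)$. For (3): over the Artinian ring $\overline{R}$, finiteness of G-dimension forces (Auslander–Bridger) $\operatorname{G-dim}_{\overline{R}}(R/I)=0$, i.e. $R/I$ totally reflexive; since $\Hom_{\overline{R}}(R/I,\overline{R})=(0:_{\overline{R}}\overline{I})=\overline{I}\cong(R/I)^{\oplus m}$, reflexivity forces $R/I\cong(R/I)^{\oplus m^2}$, so $m=1$ by a length count, i.e. $n=d+1$. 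Conversely $n=d+1$ gives $m=1$, so $\overline{I}=(a)$ with $a^2=0$ and $(0:_{\overline{R}}a)=(a)$; then $\cdots\xrightarrow{a}\overline{R}\xrightarrow{a}\overline{R}\xrightarrow{a}\cdots$ is acyclic and self-dual under $\Hom_{\overline{R}}(-,\overline{R})$, a complete resolution of $R/I$, so $R/I$ is totally reflexive and $\operatorname{G-dim}_R(R/I)=d<\infty$.

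Part (2) is the crux. The given sequence forces $R$ to be generically Gorenstein ($C$ vanishes at each minimal prime, since there the canonical module has the same length as the ring), hence $\Supp C\cap\Ass R=\emptyset$ and $\Ann_R C$ contains a non-zerodivisor; by a short prime-avoidance argument (or the harmless base change $R\to R[X]_{\m R[X]}$) it is enough to show that a non-zerodivisor $a\in\Ann_R C$ lies in $I$. As $R$ is $R$-free, $\Tor^R_i(R/I,K_R)\cong\Tor^R_i(R/I,C)$ for $i\ge2$, so these are killed by $\Ann_R C$. I would then invoke change of rings, $\Tor^R_i(R/I,K_R)\cong\Tor^{\overline{R}}_i(R/I,K_R/QK_R)=\Tor^{\overline{R}}_i(R/I,K_{\overline{R}})$ (legitimate because $Q$ kills $R/I$ and is regular on $K_R$), and Matlis duality over the Artinian ring $\overline{R}$, where $K_{\overline{R}}=\E_{\overline{R}}(k)$, to obtain $\Tor^{\overline{R}}_i(R/I,K_{\overline{R}})\cong(\Ext^i_{\overline{R}}(R/I,\overline{R}))^{\vee}$, which has the same annihilator as $\Ext^i_{\overline{R}}(R/I,\overline{R})$. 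The decisive step is to compute that annihilator: since $\overline{I}\cong(R/I)^{\oplus m}$ and $\overline{I}^2=0$, the minimal $\overline{R}$-free resolution of $R/I$ has every higher differential a block-diagonal stack of the row $[\,a_1\ \cdots\ a_m\,]$, whence for $i\ge1$ one gets $\Ext^i_{\overline{R}}(R/I,\overline{R})\cong\big(\overline{I}^{\oplus m}\,/\,\overline{R}{\cdot}(a_1,\dots,a_m)\big)^{\oplus m^{i-1}}$ (the submodule being the image of $s\mapsto(sa_1,\dots,sa_m)$), and when $m\ge2$, i.e. $n\ge d+2$, this module is nonzero with annihilator exactly $\overline{I}$ (using $(0:_{\overline{R}}a_j)=\overline{I}$ for all $j$ together with $m\ge2$ to force the "cross" coordinates to vanish). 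Therefore $\Ann_R C$ maps into $\Ann_{\overline{R}}\Ext^2_{\overline{R}}(R/I,\overline{R})=\overline{I}=I/Q$, giving $\Ann_R C\subseteq I+Q=I$.

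The real obstacle is that last annihilator computation: one must pin down $\Ext^i_{\overline{R}}(R/I,\overline{R})$ precisely enough to conclude its annihilator is $\overline{I}$ and not a strictly larger ideal, and this is exactly where $n\ge d+2$ is indispensable — for $n=d+1$ these higher Ext modules vanish (consistent with (3)), and the conclusion of (2) genuinely fails, e.g. over a Gorenstein ring, where $C=0$. Everything else — the reduction modulo $Q$, the socle bookkeeping in (1), the reflexivity/length argument in (3), and the change-of-rings and Matlis-duality identifications — is routine once the structure $\overline{I}\cong(R/I)^{\oplus m}$ is in hand.
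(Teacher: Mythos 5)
The paper does not prove Proposition \ref{Ulrich}; it imports it verbatim from \cite[Theorem 2.5, Corollary 2.13, Theorem 2.8]{GTT2}, so there is no in-paper argument to compare against. Judged on its own merits, your proof is correct. The structural input is exactly what the Ulrich condition supplies: after passing to $\overline{R}=R/Q$ one has $\overline{I}^{2}=(0)$ and $\overline{I}\cong(R/I)^{\oplus m}$ with $m=n-d$, whence $(0:_{\overline{R}}\overline{I})=\overline{I}$ and $(0:_{\overline{R}}a_{i})=\overline{I}$ for each generator $a_i$. From this, (1) is the socle count $\dim_{k}\operatorname{Soc}(\overline{R})=m\cdot\dim_{k}\operatorname{Soc}(R/I)$; (3) is the length comparison $R/I\cong(R/I)^{**}\cong(R/I)^{\oplus m^{2}}$ for the ``only if'' direction and the explicit two-periodic complete resolution $\cdots\to\overline{R}\xrightarrow{a}\overline{R}\xrightarrow{a}\overline{R}\to\cdots$ for the ``if''; and (2) follows from the block-diagonal minimal $\overline{R}$-free resolution of $R/I$, the annihilator computation $\Ann_{\overline{R}}\Ext^{i}_{\overline{R}}(R/I,\overline{R})=\overline{I}$ for $i\ge 1$ and $m\ge 2$, Matlis duality, and the change-of-rings isomorphism $\Tor_{i}^{R}(R/I,\rmK_{R})\cong\Tor_{i}^{\overline{R}}(R/I,\rmK_{\overline{R}})$ --- all of which check out. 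This is the same machinery that drives \cite{GTT2}, where the point is likewise the rigid minimal free resolution forced by the Ulrich condition and the resulting description of $\Ext_{R}^{i}(R/I,R)$ (there packaged as a statement about the complex $\RHom_{R}(R/I,R)$), so you have not taken a genuinely different path. One small remark: in (2) the preliminary detour through generic Gorensteinness and prime avoidance to find a non-zerodivisor in $\Ann_{R}C$ is unnecessary --- your endgame already shows that the image of all of $\Ann_{R}C$ in $\overline{R}$ lands in $\overline{I}=I/Q$, which yields $\Ann_{R}C\subseteq I+Q=I$ directly.
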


Let $I$ be an $\m$-primary ideal of $R$, containing a parameter ideal $Q$ of $R$ as a reduction. Assume that $I^2=QI$ and consider the exact sequence
$$
0 \to Q/QI \to I/I^2 \to I/Q \to 0
$$
of $R$-modules. We then have that $I/I^2$ is a free $R/I$-module if and only if so is $I/Q$. If $I^2=QI$ and $\mu_R(I) =d+1$, the latter condition is equivalent to saying that $Q:_RI =I$, that is $I$ is exactly a {\em good ideal} in the sense of \cite{GIW}. It is known by \cite{GOTWY} that when $R$ is a Gorenstein ring, every Ulrich ideal $I$ in $R$ is $(d+1)$-generated (if it exists), and $I$ is a good ideal of $R$ (see \cite[Lemma 2.3, Corollary 2.6]{GOTWY}). Similarly as good ideals, Ulrich ideals are characteristic ideals, but behave very well in their nature (\cite{GOTWY, GOTWY2}). The existence of $(d+1)$-generated Ulrich ideals gives a strong influence to the structure of $R$, which we shall confirm in this section.

We now be back to the following setting. Let $(R,\m)$ be a Cohen-Macaulay local ring with $\dim R = 1$, and let $\calX_{R}$ be the set of Ulrich ideals in $R$. In general, it is quite difficult to list up the members of $\calX_R$ (see, e.g., \cite{GOTWY}). Here, to grasp what kind of sets $\calX_R$ is, first of all we explore one example. To do this, we need the following.

\begin{lem}[cf. {\cite[Proposition 3.1]{GIK2}}]\label{lem3.1}
Let $R$ be a Gorenstein local ring with $\dim R=1$. We denote by $\calA_R$ the set of birational module-finite extensions $A$ of $R$ such that $A$ is a Gorenstein ring, and set $\calA_R^0 = \{A \in \calA_R \mid \mu_R(A)=2\}$. Then, there exist bijective correspondences 
$$
\calA_R^0 \to \calX_R, \   A \mapsto R:A, \ \ \text{and}\ \ \calX_R \to \calA_R^0, \   I \mapsto I:I.
$$
\end{lem}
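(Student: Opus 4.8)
The plan is to set up the two maps and check they are mutually inverse, relying on the Gorenstein hypothesis to pass freely between an ideal and its endomorphism ring. First I would recall the classical fact, going back to the theory of one-dimensional Gorenstein rings (and used in \cite{GIW}, \cite{GOTWY}), that for an $\m$-primary ideal $I$ containing a parameter $Q=(a)$ as a reduction with $I^2=QI$, the condition that $I$ is a \emph{good} ideal (equivalently, an Ulrich ideal, since $R$ is Gorenstein and hence every Ulrich ideal is $2$-generated and good by \cite[Lemma 2.3, Corollary 2.6]{GOTWY}) is equivalent to $Q:_RI=I$, and in that situation $I:I = \frac1a I$ inside $\rmQ(R)$; in particular $B:=I:I$ is a birational module-finite extension of $R$ with $\mu_R(B)=\mu_R(I)=2$. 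The key point is that $B$ is again Gorenstein: since $R$ is Gorenstein of dimension one, $I\cong \rmK_R$-dual computations show $R:B = R:(I:I)$ is isomorphic to $\rmK_B$, and because $B=I:I=\frac1a I$ one computes $R:B \cong B$ as a $B$-module, so $\rmK_B$ is free of rank one, i.e. $B\in\calA_R^0$. This shows $I\mapsto I:I$ lands in $\calA_R^0$.

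Next I would check that $A\mapsto R:A$ lands in $\calX_R$. Given $A\in\calA_R^0$, write $I=R:A$; this is an $\m$-primary ideal of $R$ (as $A$ is a proper birational extension, $R\subsetneq A\subseteq\overline R$, so $\fkc_A=R:A$ contains a non-zerodivisor and is $\m$-primary). Since $A$ is Gorenstein and birational over the Gorenstein ring $R$, taking $\rmK_R$-duals of $R\subseteq A$ and using $\rmK_A\cong A$ gives $R:A\cong A$ as an $A$-module, whence $I$ is isomorphic to $A$ and $\mu_R(I)=\mu_R(A)=2$. From $\mu_R(I)=2$, say $I=(a,b)$, and $IA=I$ (because $I$ is an $A$-module, being $R:A$), one deduces $I^2=aI$: indeed $b/a\in I:I\subseteq A$, so $b^2\in aI$, giving $I^2=(a^2,ab,b^2)=a(a,b,b^2/a)\subseteq aI\subseteq I^2$. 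Finally $I/I^2$ free over $R/I$: since $\mu_R(I)=2=d+1$ it suffices to show $Q:_RI=I$ with $Q=(a)$, and $Q:_RI=aA\cap R=\dots=I$ follows from $A=I:I=\frac1aI$ together with $I$ being $\m$-primary; thus $I\in\calX_R$.

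It remains to see the two assignments are inverse. In one direction, for $I\in\calX_R$ with $I^2=aI$, we have $I:I=\frac1aI$, so $R:(I:I)=R:\frac1aI=aR:I=(a):_RI=I$ by the good-ideal property just used; hence $A\mapsto R:A$ applied to $A=I:I$ returns $I$. In the other direction, for $A\in\calA_R^0$ with $I=R:A$, I must show $I:I=A$. The inclusion $A\subseteq I:I$ is immediate since $IA=I$. For the reverse, $I\cong A$ as $A$-modules forces $\End_R(I)\cong\End_A(A)=A$, and $I:I=\End_R(I)$ inside $\rmQ(R)$ because $I$ contains a non-zerodivisor; therefore $I:I=A$. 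This closes the loop and proves both maps are bijections.

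The step I expect to be the main obstacle is establishing, cleanly and in both directions, that the Gorenstein property is preserved under the correspondence — i.e. that $I\in\calX_R$ forces $I:I$ Gorenstein, and conversely that $A\in\calA_R^0$ forces $R:A$ Ulrich rather than merely a good or stable ideal. Both hinge on the canonical-module bookkeeping $R:B\cong\rmK_B$ for birational extensions $B$ of the Gorenstein ring $R$, combined with the identity $I:I=\frac1aI$ valid precisely for good ideals; once these are in place the rest is the formal verification above. I would cite \cite[Proposition 3.1]{GIK2} for the analogous statement and adapt its argument.
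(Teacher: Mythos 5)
Your proposal pursues the same underlying ideas as the paper's proof — the identity $I:I = a^{-1}I$ for ideals with $I^2 = aI$ and $(a):_RI = I$, the canonical-module identification $R:B \cong \rmK_B$ for a birational module-finite extension $B$ of the one-dimensional Gorenstein ring $R$, and the biduality $I = R:(R:I)$ — but whereas the paper simply cites \cite[Propositions 2.5 and 3.1]{GIK2} for the bijection between the set $\calG_R$ of good ideals and $\calA_R$ and then specializes to $\mu_R(\cdot)=2$, you re-derive the needed pieces directly. That is legitimate and more self-contained, though it offers no real shortcut over citing \cite{GIK2}.

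There is, however, a gap in the verification that $A \mapsto R:A$ lands in $\calX_R$. You write $I=(a,b)$ without constraining the choice of $a$, and then assert $b/a \in I:I$ in order to conclude $I^2 = aI$. For an arbitrary choice of $a$ this can fail: since $A$ is Gorenstein, $I = R:A \cong \rmK_A \cong A$ is $A$-cyclic, say $I=cA$; writing $a=c\alpha$, $b=c\beta$ with $\alpha,\beta\in A$, one has $b/a\in A = I:I$ iff $\alpha$ divides $\beta$ in $A$, which need not hold for a generic pair of $R$-generators. The repair is to fix $a$ to be an $A$-generator of $I$, i.e.\ $a=c$ (such an element is automatically part of a minimal $R$-system of generators, else $1\in\m A\subseteq J(A)$); then $b/a\in A$, $I^2 = a^2A = aI$, and $(a):_RI = \{r\in R : raA\subseteq aR\} = R:A = I$ all follow at once. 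Relatedly, your displayed chain ``$Q:_RI = aA\cap R = \cdots$'' is not a valid intermediate step: $(a):_RI$ is not $aA\cap R$ by definition, and the two sets coincide only because both turn out to equal $I$; the computation just given should replace it. With these local repairs the rest of your argument, including the mutual-inverse check via $I:I = a^{-1}I$ and $R:(I:I)=I$, is sound.
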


\begin{proof}
Let $\calG_R$ be the set of ideals $I$ in $R$ such that $I^2=aI$ and $I = (a):_RI$ for some non-zerodivisor $a \in I$. We then have by \cite[Proposition 3.1]{GIK2} a bijective correspondence $\calG_R \to \calA_R, ~I \mapsto I:I$.
Because $\calX_R = \{I \in \calG_R \mid \mu_R(I) = 2\}$ and $I:I = a^{-1}I$ for every $I \in \calG_R$ and every reduction $(a)$ of $I$, we get $\mu_R(I:I) = \mu_R(I)$, so that $I:I \in \calA_R^0$ for each $I \in \calX_R$. Conversely, let $A \in \calA_R^0$ and write $A = I:I$ with $I \in \calG_R$. Let $(a)$ be a reduction of $I$. Then, $A = I:I = a^{-1}I$, so that $\mu_R(I) = \mu_R(A) = 2$, while $R:A=I$, because $A= R:I$ by \cite[Proposition 2.5]{GIK2} and $I = R:(R:I)$ (remember that $R$ is a Gorenstein ring). Hence, $I \in \calX_R$, and the correspondences follow.
\end{proof}

\begin{ex}\label{3.2a}
Let $k$ be a field and set $R = k[[t^n, t^{n+1}, \ldots, t^{2n-2}]]~(n \ge 3)$, where $t$ is an indeterminate. Then, $R$ is a Gorenstein ring, and 
\begin{equation*}
\calX_{R} = 
\begin{cases}
\{(t^4, t^6)\} & (n=3), \\
\{(t^4-\alpha t^5, t^6) \mid \alpha \in k \} & (n=4), \\
\ \ \ \emptyset & (n \ge 5).
\end{cases}
\end{equation*}
When $n =4$,  we have $(t^4-\alpha t^5, t^6)=(t^4-\beta t^5, t^6)$, only if $\alpha =\beta$. 
\end{ex}

\begin{proof}
Our ring $R$ is a Gorenstein ring, since the numerical semigroup $H =\left<n, n+1, \ldots, 2n-2\right>$ is symmetric (\cite{HK2}). Therefore, in order to determine the members of $\calX_R$, by Lemma \ref{lem3.1} it suffices to list  the members of $\calA_R^0$, taking $R:A$ for each $A\in \calA_R^0$. We set $V = k[[t]]$.

{(1)} {\em $($The case where $n=3$$)$} Let $A \in \calA_R^0$.  Then $R \subsetneq A \subsetneq V$, whence $t^5 \in R:\m \subseteq A$, which follows from the fact that the image of $t^5$ in $\rmQ(R)/R$ is a unique socle of $\rmQ(R)/R$ and $(0) \ne A/R \subseteq \rmQ(R)/R$. Therefore
$$
k[[t^3, t^4, t^5]] \subseteq A \subseteq k[[t^2, t^3]].
$$ Hence $A=k[[t^2, t^3]]$, because $k[[t^3, t^4, t^5]]$ is not a Gorenstein ring and $\ell_{k}( k[[t^2, t^3]]/k[[t^3, t^4, t^5]])=1$. It is direct to show $R:A = R: t^2 =(t^4, t^6)$. Hence $\calX_R=\{(t^4,t^6)\}$.

{(2)} {\em $($The case where $n=4$$)$} Let $A \in \calA_R^0$. Then, $t^7 \in R:\m \subseteq A$, and $
k[[t^4,t^5, t^6, t^7]] \subseteq A \subseteq k[[t^2, t^3]]$. We have $A \not\subseteq k[[t^3,t^4, t^5]]$. Indeed, if $A \subseteq k[[t^3,t^4, t^5]]$, then $A=k[[t^3, t^4, t^5]]$ or $A= k[[t^4,t^5, t^6, t^7]]$, because $\ell_k(k[[t^3,t^4, t^5]]/k[[t^4,t^5, t^6, t^7]])=1$. This is, however, impossible, since both $k[[t^3, t^4, t^5]]$ and $k[[t^4,t^5, t^6, t^7]]$ are not a Gorenstein rings. Hence 
$$
k[[t^4,t^5, t^6, t^7]] \subsetneq A \subseteq k[[t^2, t^3]], \ \ A \not\subseteq k[[t^3,t^4,t^5]].$$ We choose $\xi \in A$ so that $\xi \not\in k[[t^3,t^4,t^5]]$. Then, since $k[[t^4,t^5, t^6, t^7]] = k + t^4V \subseteq A$, we may assume that $\xi = t^2 + \alpha t^3$ with $\alpha \in k$. Therefore, because $$k[[t^4, t^5, t^6, t^7]] \subsetneq R[\xi]=k[[t^2+\alpha t^3, t^4, t^5, t^6]] \subseteq A \subseteq k[[t^2,t^3]]$$
and $\ell_k(k[[t^2,t^3]]/k[[t^4, t^5, t^6, t^7]]) = 2$,  we have
$
\ell_k(k[[t^2, t^3]]/R[\xi]) \le 1.
$
Hence,
$A=R[\xi]$ or $A=k[[t^2, t^3]]$, where $k[[t^2,t^3]] \not\in \calA_R^0$ since 
 $\mu_R(k[[t^2,t^3]])= 3$. Thus, $A= R[\xi]$, and we have $R:A = R:R[\xi] = R: \xi = (t^4-\alpha t^5, t^6)$. Hence, $\calX_R=\{(t^4-\alpha t^5, t^6) \mid \alpha \in k\}$, because $\calA_R^0=\{R[t^2+\alpha t^3] \mid \alpha \in k\}$.

{(3)} {\em $($The case where $n= 2q + 1$ with $q \ge 2$$)$} Assume that $\calX_R \ne \emptyset$ and choose $I \in \calX_R$. We set $A = I:I$. Then $A \in \calA_R^0$. We have 
$
t^nV \subseteq k[[t^n, t^{n+1}, \ldots, t^{2n-1}]] \subseteq A,
$
since the image of $t^{2n -1}$ in $\rmQ(R)/R$ is a unique socle of $\rmQ (R)/R$. We set 
$
\calC = A : V = t^cV~(c \ge 0),
$
the conductor of $A$. Hence, $c \le n =2q+1$, because $t^nV \subseteq A$. Let $\ell = \ell_k(V/A)$. We then have $2\ell = c$, since $A$ is a Gorenstein ring (\cite[Korollar 3.5]{HK}), so that $\ell \le q$. Let $\m_A$ denote the maximal ideal of $A$. Then, $(\m_A/\m A)^2=(0)$, since $\ell_k(A/\m A)=\ell_R(A/\m A) = \mu_R(A)=2$. We look at the chain
$$
A/{\m A} \supsetneq \m_A/\m A \supsetneq (\m_A/\m A)^2 = (0)
$$
of ideals in the ring $\overline{A}=A/\m A$, and take $\xi \in \m_A$, so that $\m_A/\m A = (\overline{\xi})$ (here $\overline{\xi}$ denotes the image of $\xi$ in $\overline{A}$). Then, $\overline{\xi} \neq 0$, but ${\overline{\xi}}^2 = 0$ in $\overline{A}$. Consequently, $
\xi^2 \in \m A \subseteq t^{n}V$ and $A = R + R\xi$,
since $A/\fkm A = k + k \overline{\xi}$. Therefore, $
2{\rm \nu}(\xi) \ge n = 2q+1,
$ so that ${\rm \nu}(\xi) \ge q+1$ (here $\nu(*)$ denotes the valuation of $V$). Thus, $
A = R + R\xi \subseteq k[[t^{q+1}, t^{q+2}, \ldots, t^{2q+1}]],
$ 
whence $A = k[[t^{q+1}, t^{q+2}, \ldots, t^{2q+1}]]$, because $\ell_k(V/k[[t^{q+1}, t^{q+2}, \ldots, t^{2q+1}]]) = q$ and $\ell_k(V/A) =\ell\le q$. 
This is, however, impossible, since $A$ is a Gorenstein ring but $k[[t^{q+1}, t^{q+2}, \ldots, t^{2q+1}]]$ is not. Thus $\calX_R = \emptyset$.

{(4)} {\em $($The case where $n = 2q$ with $q \ge 3$$)$} Assume that $\calX_R \ne \emptyset$ and choose $I \in \calX_R$. We set $A = I:I$. We then have $t^{2n-1} \in A$, considering the image of $t^{2n-1}$ in $\rmQ(R)/R$. We set $\ell = \ell_k(V/A)$ and $\calC = A : V$. Then $\calC = t^{2\ell}V$, since $A$ is a Gorenstein ring. Therefore, $\ell \le q$, because $t^n V \subseteq A$ and $n = 2q$. On the other hand, considering the chain
$$
A/{\m A} \supsetneq \m_A/\m A \supsetneq (\m_A/\m A)^2 = (0)
$$
of ideals in the ring $\overline{A}=A/{\m A}$ and taking $\xi \in \m_A$ so that $\m_A/\m A= (\overline{\xi})$, we get $\overline{\xi} \neq 0$ and ${\overline{\xi}}^2 = 0$ in $\overline{A}$. Therefore, $
\xi^2 \in \m A \subseteq t^{n}V \ \ \text{and} \ \ A = R + R\xi$, 
because $A/\fkm A = k + k \overline{\xi}$. Consequently, $2{\rm \nu}(\xi) \ge n = 2q$. Hence, ${\rm \nu}(\xi) \ge q$, so that $$A= R + R\xi\subsetneq k[[t^q, t^{q+1}, \ldots, t^{2q-1}]] \subseteq V,$$ where the strictness of the first inclusion follows from the fact that $k[[t^q, t^{q+1}, \ldots, t^{2q-1}]]$ is not a Gorenstein ring. Therefore, because $\ell_k(V/A)= \ell$ and $\ell_k(V/k[[t^q, t^{q+1}, \ldots, t^{2q-1}]]) = q-1$, we get $q-1 < \ell$, whence $\ell = q$. We set $T = k[[t^{2q}, t^{q+1}, \ldots, t^{4q-1}]]$. Then $\ell_k(A/T) = q -1$, since $\ell_k(V/T)= 2q-1$. Because 
$$
A/T \subseteq V/T = k\overline{t} + k\overline{t^2} + \cdots + k\overline{t^{2q-1}},$$
where $\overline{*}$ denotes the image in $V/T$, we obtain elements $\xi_1, \xi_2, \ldots, \xi_{q-1} \in \sum_{i=1}^{2q-1}kt^i$ so that $A = T + \sum_{i=1}^{q-1} k\xi_i$. Therefore 
$$A = T[\xi_1, \xi_2, \ldots, \xi_{q-1}] = k[[\xi_1, \xi_2, \ldots, \xi_{q-1}, t^{2q}, \ldots, t^{4q-1}]],$$
 whence $\xi_1, \xi_2, \ldots, \xi_{q-1} \in \m_A$ and $(\overline{\xi_1}, \overline{\xi_2}, \ldots, \overline{\xi}_{q-1}) \subseteq \m_A/\m A$. We now notice that if $\sum_{i=1}^{q-1} a_i\xi_i \in \m A$ with $a_i \in k$, then $\sum_{i=1}^{q-1} a_i\xi_i \in t^{2q}V$, whence $a_i=0$ for all $1\le i \le q-1 $. Therefore, $1 = \ell_k(\m_A/\m A) \ge q-1 \ge 2$. This is a required contradiction. 
 
Let us make sure of the last assertion. Suppose that $n=4$ and $(t^4-\alpha t^5, t^6) = (t^4-\beta t^5, t^6)$ where $\alpha, \beta \in k$. We write $t^4- \alpha t^5 = f (t^4- \alpha t^5) + g t^6$ for some $f, g \in R$. By setting $f = c_0 + c_1t^4+c_2t^5+c_3t^6 + \eta$, $g = d_0+ d_1t^4 + d_2t^5+d_3t^6 + \xi$ for some $c_i, d_j \in k$ and $\eta, \xi \in t^8V$, we then have $t^4-\alpha t^5 = c_0t^4 -\beta c_0t^5 + d_0 t^6 + \text{(higher  terms)}$. Hence, $c_0=1$ and $\alpha = \beta c_0 = \beta$, as desired.
\end{proof}

Let us give here simple examples of $2$-AGL rings, which contain numerous two-generated Ulrich ideals.

\begin{ex}\label{2.7}
Let $(R, \m)$ be an ${\rm AGL}$ ring with $\dim R=1$ and suppose that $R$ is not a Gorenstein ring, say $R= k[[t^3,t^4,t^5]]$, the semigroup ring of $H = \left<3,4,5\right>$ over a field $k$. Let $\alpha \in \m$ and consider the quasi-trivial extension $A = R \overset{\alpha}{\ltimes} R$ of $R$ with respect to $\alpha$ (see Section 3) Then, $A$ is a $2$-AGL ring by \cite[Theorem 3.10]{CGKM}, because $A$ is a free $R$-module with $\ell_R(A/\m A) = 2$. Let $\fkq$ be a parameter ideal of $R$ and assume that $\alpha \in \fkq$. We set $I = \fkq \times R$. Then, $I$ is an Ulrich ideal of $A$ with $\mu_A(I)=2$. Therefore, if $\alpha = 0$, then $\fkq \times R$ is an Ulrich ideal of $A$ for every parameter ideal $\fkq$ of $R$ (\cite[Example 2.2]{GOTWY}). 
\end{ex}

\begin{proof}
Let $\fkq = (a)$ and set $f = (a,0) \in I$. Then, $I^2 = fI$, since $I^2 = (a^2) \times (aR + \alpha R) = (a^2) \times aR=fI$. Note that $I/fA = [(a) \oplus R]/[(a) \oplus (a)] \cong R/(a)$ and $A/I = [R \oplus R]/[(a) \oplus R] \cong R/(a)$ as $R$-modules. We then have $\ell_A(A/I) =\ell_A(I/fA) = \ell_R(R/(a))$. Hence, $A/I \cong I/fA$ as an $A$-module, because $I/fA$ is a cyclic $A$-module. Thus, $I \in \calX_A$ with $\mu_A(I)=2$.
\end{proof}

Two-generated Ulrich ideals are totally reflexive $R$-modules (Proposition \ref{Ulrich} (3)), possessing minimal free resolutions of a very restricted form. Let us note the following, which we need to prove Theorem \ref{2.3}. We include a brief proof for the sake of completeness.

\begin{prop}[{cf. \cite[Example 7.3]{GOTWY}}]\label{2.2}
Suppose that $I$ is an Ulrich ideal of $R$ and assume that $\mu_R(I)=2$. We write $I=(a, b)$ with $(a)$ a reduction of $I$. Therefore, $b^2 = ac$ for some $c \in I$. With this notation, the minimal free resolution of $I$ is given by 
$$
\Bbb F : \ \ \cdots \to R^2 \overset{
\begin{pmatrix}
-b & -c\\
a & b
\end{pmatrix}}{\longrightarrow}
R^2 \overset{
\begin{pmatrix}
-b & -c\\
a & b
\end{pmatrix}}{\longrightarrow}R^2 \overset{\begin{pmatrix}
a & b
\end{pmatrix}}{\longrightarrow} I \to 0,
$$
Hence $\pd_RI= \infty$. The ideal $I$ is so called a totally reflexive $R$-module, because $I$ is reflexive, $\Ext_R^p(I, R) =(0)$, and $\Ext_R^p(\Hom_R(I,R), R) = (0)$ for all $p >0$.
\end{prop}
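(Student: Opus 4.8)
The plan is to construct the resolution explicitly by hand and then verify the total reflexivity directly from it. First I would recall the structure theory for two-generated Ulrich ideals: writing $I = (a,b)$ with $(a)$ a reduction, the condition $I^2 = aI$ forces $b^2 \in aI$, so $b^2 = ac$ for some $c \in I$; this is exactly the relation from which everything flows. The key observation is that the matrix $\Phi = \left(\begin{smallmatrix} -b & -c \\ a & b \end{smallmatrix}\right)$ satisfies $\Phi^2 = (b^2 - ac) E = O$ where $E$ is the identity, so $\Phi$ is a square matrix whose square is zero; this is what allows the resolution to be periodic of period one once it gets going.

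The main steps, in order, would be: (i) check that $\left(\begin{smallmatrix} a & b \end{smallmatrix}\right)\Phi = \left(\begin{smallmatrix} -ab-ab & -ac+b^2 \end{smallmatrix}\right) = \left(\begin{smallmatrix} 0 & 0 \end{smallmatrix}\right)$, wait — one must be careful: $-ab+ba = 0$ and $-ac+b^2 = 0$, so indeed the composite is zero, giving a complex $R^2 \xrightarrow{\Phi} R^2 \xrightarrow{(a\ b)} I \to 0$; (ii) verify exactness at the first $R^2$, i.e. that the syzygies of $(a,b)$ are generated by the columns of $\Phi$ — here one uses that $a$ is a nonzerodivisor (as a reduction of an $\m$-primary ideal in a Cohen--Macaulay ring of dimension one) so that any relation $a x + b y = 0$ can be analyzed by passing to $\overline{R} = R/(a)$, where $\bar b$ kills $\bar y$ and one invokes that $I/I^2$ is $R/I$-free of rank two to control $\Ann_{\overline R}(\bar b)$; (iii) observe that $\Im \Phi \cong \Syz^1_R I$ and that $\Phi^2 = O$ together with a symmetric exactness argument shows the complex continues periodically, so $\mathbf{F}$ as displayed is a (minimal, since all entries lie in $\m$) free resolution, whence $\pd_R I = \infty$; (iv) for total reflexivity, apply $\Hom_R(-, R)$ to the periodic tail: since $\Phi$ is (up to sign and transpose) self-dual — indeed ${}^t\Phi = \left(\begin{smallmatrix} -b & a \\ -c & b \end{smallmatrix}\right)$ is conjugate to $\Phi$ by the permutation swapping coordinates, or one simply notes the dualized complex is again built from a square-zero matrix with the same cokernel pattern — the complex $\Hom_R(\mathbf{F}, R)$ is again exact in positive degrees, giving $\Ext^p_R(I,R) = 0$ and, running the same computation for the dual module, $\Ext^p_R(\Hom_R(I,R),R) = 0$ for all $p > 0$; reflexivity of $I$ then follows, so $I$ is totally reflexive.

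The step I expect to be the main obstacle is (ii), establishing that the columns of $\Phi$ generate all syzygies of $(a,b)$ and not merely some of them. It is clear that they are syzygies, but minimality and completeness of the presentation require genuinely using the Ulrich hypothesis: that $I^2 = aI$ (to get $b^2 = ac$ with $c \in I$, and more precisely to control which multiples of $b$ lie in $(a)$) and that $I/I^2$ is free over $R/I$ (which pins down $\mu_R(I) = 2$ and forces the first syzygy module to have rank exactly two with the stated generators). Concretely, given $ax + by = 0$, one reduces mod $a$ to get $\bar b \bar y = 0$ in $R/(a)$, then must show $\bar y \in (\bar a, \bar b) = \bar I$ up to the relation coming from $c$; this is where one leans on the explicit description of $\Ann(\bar b)$ afforded by the Ulrich condition. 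Once (ii) is in hand, steps (iii) and (iv) are essentially formal consequences of $\Phi^2 = O$ and the self-duality of the pattern, and the remaining assertions ($\pd_R I = \infty$, total reflexivity) are immediate.
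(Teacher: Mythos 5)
Your proposal is essentially correct and follows the same overall route as the paper: establish the periodic free resolution from the matrix $\Phi=\left(\begin{smallmatrix}-b & -c\\ a & b\end{smallmatrix}\right)$ with $\Phi^2=O$, verify that the $R$-dual of this complex remains exact, and deduce the total reflexivity of $I$. Two small observations on how the two arguments are organized, and one place where your sketch is thinner than it should be.

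First, the paper does not rederive the resolution from scratch; it cites \cite[Example 7.3]{GOTWY}, merely remarking that the argument there does not use the Gorenstein hypothesis. Your step (ii) reconstructs it, and the decisive fact you need there is the equality $(a):_R b = I$. You gesture at this via ``$I/I^2$ is $R/I$-free of rank two to control $\Ann_{\overline{R}}(\bar b)$,'' which is the right lever, but worth making explicit: for a two-generated Ulrich ideal $I=(a,b)$ with $I^2=aI$, freeness of $I/I^2$ over $R/I$ is equivalent to $(a):_R I = I$ (the good-ideal condition), and then $(a):_R b = (a):_R I = I$ since $za\in(a)$ is automatic. With this identity in hand, a relation $ax+by=0$ gives $y\in (a):_R b = I$, say $y=\alpha a+\beta b$, and then $a(x+\alpha b+\beta c)=0$ with $a$ a nonzerodivisor forces $x=-\alpha b-\beta c$, so $(x,y)^t=\Phi(\alpha,\beta)^t$. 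Your outline does not quite close this computation.

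Second, for the vanishing of $\Ext_R^p(I^*,R)$ the paper takes a slightly cleaner path: it first notes $I=(a):_R I = a(R:I)$ so that $I\cong R:I\cong I^*$, and then the vanishing of $\Ext_R^p(I^*,R)$ follows immediately from $\Ext_R^p(I,R)=0$. You argue instead via the self-duality of the matrix (your observation that ${}^t\Phi$ is conjugate, up to sign, to $\Phi$ by the coordinate swap, which indeed gives $P\Phi P = -{}^t\Phi$). Both are correct; the paper's route makes the symmetry structural ($I\cong I^*$) rather than matrix-level. Your final step, ``reflexivity then follows,'' is stated without proof; the paper's argument is to dualize the short exact sequence $0\to I\to R^2\to I\to 0$ and note that the result $0\to I^*\to R^2\to I^*\to 0$ is again exact, from which reflexivity follows by dualizing once more and comparing with the original sequence. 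You should spell this out rather than asserting it.

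Overall, no genuine gap in the strategy, only in the level of detail: nail down $(a):_R b=I$ explicitly, and make the reflexivity deduction concrete.
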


\begin{proof}
Here we don't assume that $R$ is a Gorenstein ring, but the proof given in {\cite[Example 7.3]{GOTWY}} still works to get the minimal free resolution $\Bbb F$ of $I$. Since $$I= (a):_RI = (a) : I = a(R:I),$$ we have  $I \cong R:I \cong I^*$, where $I^* = \Hom_R(I,R)$.  Note that the $R$-dual $\Bbb F^*$ of $\Bbb F$ remains exact. In fact, assume that $\left(\begin{smallmatrix}
-b&a\\
-c&b
\end{smallmatrix}\right)\left(\begin{smallmatrix}
x\\
y
\end{smallmatrix}\right)=0$. Then, since $-bx + ay = 0$, we have $\left(\begin{smallmatrix}
-y\\
x
\end{smallmatrix}\right)=\left(\begin{smallmatrix}
-b&-c\\
a&b
\end{smallmatrix}\right)\left(\begin{smallmatrix}
f\\
g
\end{smallmatrix}\right)$ for some $f,g \in R$. Therefore, $\left(\begin{smallmatrix}
x\\
y
\end{smallmatrix}\right)=\left(\begin{smallmatrix}
-b&a\\
-c&b
\end{smallmatrix}\right)\left(\begin{smallmatrix}
-g\\
f
\end{smallmatrix}\right)$, which shows that $\Bbb F^*$ is exact, because $\left(\begin{smallmatrix}
-b&a\\
-c&b
\end{smallmatrix}\right)^2=0$. Consequently, $\Ext_R^p(I,R)=(0)$ for all $p > 0$, whence $\Ext_R^p(I^*,R)=(0)$ for all $p >0$, because $I \cong I^*$. On the other hand, by the above argument we have an exact sequence $$0 \to I \to R^{\oplus 2} \to I \to 0$$ whose $R$-dual $0 \to I^* \to  R^{\oplus 2} \to I^* \to 0$ remains exact. Therefore, $I$ is a reflexive $R$-module. Thus, $I$ is a totally reflexive $R$-module.
\end{proof}

We now start the analysis of the question of how many two-generated Ulrich ideals are contained in a given $2$-AGL ring. Let $K$ be a  canonical fractional ideal of $R$. Let $S =R[K]$ and set $\fkc = R : S$. We then have the following, which shows the existence of two-generated Ulrich ideals is a substantially strong restriction.

\begin{thm}\label{2.3}
Suppose that $R$ is a $2$-${\rm AGL}$ ring and let $K$ be a canonical fractional ideal of $R$. Let $\fkc = (x_1^2) + (x_2, x_3, \ldots, x_n)$ with a minimal system $x_1, x_2, \ldots, x_n$ of generators of $\m$. Assume that $R$ contains an Ulrich ideal $I$ with $\mu_R(I)=2$. Then the following assertions hold true.
\begin{enumerate}[$(1)$]
\item $K/R$ is a free $R/\fkc$-module. 
\item $I + \fkc = \m$.
\item $\fkc = (x_2, x_3, \ldots, x_n)$.
\end{enumerate}
Consequently, $\mu_R(\fkc) =n-1$, and $x_1^2 \in (x_2, x_3, \ldots, x_n)$.
\end{thm}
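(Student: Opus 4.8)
The plan is to reduce all three assertions to a single vanishing statement for $\Tor$. Since $I$ is $\m$-primary with $\mu_R(I)=2=\dim R+1$, Proposition \ref{Ulrich} (3) shows that $R/I$ has finite Gorenstein dimension, and then Auslander--Bridger forces this dimension to equal $\depth R-\depth R/I=1$; equivalently, by Proposition \ref{2.2} the first syzygy $I$ of $R/I$ is totally reflexive, so the minimal free resolution of $R/I$ is periodic, built from the single matrix $\phi=\left(\begin{smallmatrix}-b&-c\\ a&b\end{smallmatrix}\right)$ (with $I=(a,b)$, $(a)$ a reduction, $b^{2}=ac$, $c\in I$), all of whose Betti numbers equal $2$ from the first on. The heart of the argument will be the claim
$$
\Tor_i^R(R/I,\rmK_R)=0\qquad(i\ge 2),
$$
and I expect this to be the only real obstacle. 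I would prove it by a complete‑resolution argument: $R/I$ has finite Gorenstein dimension, hence a totally acyclic complete resolution $\Bbb T$ of finitely generated frees, while $\rmK_R$ has finite injective dimension $\depth R=1$; then $\Bbb T\otimes_R\rmK_R\cong\Hom_R(\Bbb T^{\ast},\rmK_R)$, which is acyclic since $\Hom_R(-,\rmK_R)$ carries an acyclic complex of finitely generated frees to an acyclic complex once $\operatorname{id}_R\rmK_R<\infty$; thus the complete (Tate) homology of $R/I$ against $\rmK_R$ vanishes identically, and in degrees $\ge 2$ it coincides with ordinary $\Tor$. (Alternatively one can cite the analogous computation in \cite{GTT2}.) Granting this, apply $R/I\otimes_R-$ to $0\to R\to K\to K/R\to 0$ and use $K\cong\rmK_R$ to conclude $\Tor_i^R(R/I,K/R)=0$ for all $i\ge 2$.

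For (1), write $K/R\cong(R/\fkc)^{\oplus\ell}\oplus(R/\m)^{\oplus m}$ with $\ell>0$ as in Proposition \ref{2.3a} (4). A nonzero summand $R/\m$ would make $\Tor_2^R(R/I,R/\m)\cong(R/\m)^{2}$ a nonzero direct summand of $\Tor_2^R(R/I,K/R)=0$; hence $m=0$, i.e.\ $K/R$ is $R/\fkc$‑free. Splitting off the copies of $R/\fkc$ gives in addition $\Tor_2^R(R/I,R/\fkc)=0$, which I reuse in the remaining two parts.

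For (2), suppose $I\subseteq\fkc$. Then $R/\fkc$ is an $R/I$‑module, so tensoring the minimal free resolution of $R/I$ with $R/\fkc$ kills every differential (its entries lie in $I\subseteq\fkc$) and yields $\Tor_2^R(R/I,R/\fkc)\cong(R/\fkc)^{2}\ne 0$, contradicting the last line of the previous paragraph. Thus $I\not\subseteq\fkc$, and since $\ell_R(\m/\fkc)=\ell_R(R/\fkc)-1=1$ by Theorem \ref{mainref} (7) the only ideals between $\fkc$ and $\m$ are these two, so $I+\fkc=\m$.

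For (3) and the final statement, note first that $x_1^{2}\in\fkc$ makes $(\m/\fkc)^{2}=0$, so $R/\fkc$ is a length‑two local ring with square‑zero maximal ideal $\m/\fkc=(\overline{x_1})$. Because $a,b,c\in I\subseteq\m=Rx_1+\fkc$, write $a\equiv ux_1,\ b\equiv vx_1,\ c\equiv wx_1\pmod{\fkc}$ with $u,v,w\in R$ and residues $\alpha,\beta,\gamma\in R/\m$; a short computation shows that the reduction $\overline{\phi}$ of $\phi$ on $(R/\fkc)^{2}$ is square‑zero with $\ell_R(\Im\overline{\phi})=\operatorname{rank}_{R/\m}\left(\begin{smallmatrix}-\beta&-\gamma\\ \alpha&\beta\end{smallmatrix}\right)$. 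Since $\Tor_2^R(R/I,R/\fkc)=\Ker\overline{\phi}/\Im\overline{\phi}=0$ and $\ell_R\big((R/\fkc)^{2}\big)=4$, this rank is $2$, so $\beta^{2}\ne\alpha\gamma$. Reducing the relation $b^{2}=ac$ modulo $\m\fkc$ (using $x_1\fkc\subseteq\m\fkc$ and $\fkc^{2}\subseteq\m\fkc$) gives $(v^{2}-uw)x_1^{2}\in\m\fkc$, and $v^{2}-uw$ is a unit because its residue $\beta^{2}-\alpha\gamma$ is nonzero; hence $x_1^{2}\in\m\fkc$. Finally $\m\fkc=(x_1^{3})+\m(x_2,\dots,x_n)$, so writing $x_1^{2}=rx_1^{3}+g$ with $g\in\m(x_2,\dots,x_n)$ and absorbing the unit $1-rx_1$ gives $x_1^{2}\in(x_2,\dots,x_n)$. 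Therefore $\fkc=(x_1^{2})+(x_2,\dots,x_n)=(x_2,\dots,x_n)$, which is (3); consequently $\mu_R(\fkc)=n-1$ (these $n-1$ elements are part of a minimal generating set of $\m$) and $x_1^{2}\in(x_2,\dots,x_n)$.
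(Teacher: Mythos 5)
Your proof is correct, and it departs from the paper's argument in two genuine ways. First, you work with $\Tor$ where the paper works with $\Ext$: the paper simply observes that $\Ext_R^{p}(I,K)=0$ for $p>0$ (which is immediate since $I$ is maximal Cohen--Macaulay and $K$ is the canonical module), applies $\Hom_R(I,-)$ to $0\to R\to K\to K/R\to 0$, and reads off that a summand $R/\m$ of $K/R$ would force $\Ext_R^{p}(I,R/\m)=0$, contradicting $\pd_R I=\infty$. You instead prove $\Tor_i^{R}(R/I,\rmK_R)=0$ for $i\ge 2$ via Tate homology and a complete resolution $\Bbb T$. That route works, but your justification that ``$\Hom_R(-,\rmK_R)$ carries an acyclic complex of finitely generated frees to an acyclic complex'' is not true for arbitrary acyclic complexes; what you actually need, and what holds here, is that $\Bbb T^{\ast}$ is \emph{totally} acyclic so its cycles are totally reflexive, hence MCM, hence $\Ext_R^{1}(Z,\rmK_R)=0$ by local duality. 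Stated precisely, this carries the same content as the paper's one-line $\Ext$-vanishing, but it is considerably heavier machinery to reach the same place. Second, and more interestingly, your proof of (3) replaces the paper's two-case analysis (according to whether $\overline{a}=0$ or $\overline{a}\neq 0$ in $R/\fkc$, after a change of generators of $I$) by a uniform computation: you extract from $\Tor_2^{R}(R/I,R/\fkc)=0$ the numerical constraint $\beta^{2}\neq\alpha\gamma$ on the residues, then reduce $b^{2}=ac$ modulo $\m\fkc$ to get $x_1^{2}\in\m\fkc$ and finally $x_1^{2}\in(x_2,\dots,x_n)$ by Nakayama-type unit absorption. This is a clean alternative to the paper's case split, and the determinant criterion makes the mechanism behind the paper's Case 1/Case 2 computations transparent. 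The only (harmless) slip is the claim $\m\fkc=(x_1^{3})+\m(x_2,\dots,x_n)$; what you use, and what is true, is $\m\fkc\subseteq(x_1^{3})+(x_2,\dots,x_n)$, which suffices for the unit-absorption step.
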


\begin{proof}
(1) We have $K/R \cong (R/\fkc)^{\oplus \ell} \oplus (R/\m)^{\oplus m}$ with $\ell > 0, m \ge 0$ such that $\ell +m = \rmr(R)+1$ (Proposition \ref{2.3a} (4)). To show assertion $(1)$, let us assume that $m>0$. Then, since $I$ is totally reflexive (Proposition \ref{2.2}) and $\Ext_R^p(I,K)= (0)$ for every $p > 0$, we get $\Ext_R^p(I, K/R)=(0)$, so that
$$
\Ext_R^p(I, R/\m)=(0) \ \ \text{for all} \ \ p>0,
$$
because $R/\m$ is a direct summand of $K/R$. This is impossible, since $\pd_RI=\infty$. Hence, $m=0$, and $K/R$ is $R/\fkc$-free.

(2) Let us use the same notation as in Proposition \ref{2.2}. Hence, $I$ has a minimal free resolution of the form
$$
\Bbb F : \ \ \cdots \to R^2 \overset{
\begin{pmatrix}
-b & -c\\
a & b
\end{pmatrix}}{\longrightarrow}
R^2 \overset{
\begin{pmatrix}
-b & -c\\
a & b
\end{pmatrix}}{\longrightarrow}R^2 \overset{\begin{pmatrix}
a & b
\end{pmatrix}}{\longrightarrow} I \to 0.
$$ Remember that $\Ext_R^p(I, R/\fkc) = (0)$ for all $p >0$, because $\Ext_R^p(I,K)= \Ext^p_R(I,R)= (0)$ for all $p >0$ and $R/\fkc$ is a direct summand of $K/R$. Let $\overline{x}$ denote, for each $x \in R$, the image of $x$ in $R/\fkc$. Then, taking the $R/\fkc$-dual of the resolution $\Bbb F$, we get the exact sequence  
$$ (E)\ \ \ 
0 \to \Hom_R(I, R/\fkc) \to (R/\fkc)^{\oplus 2} \overset{
\begin{pmatrix}
-\overline{b} & \overline{a}\\
-\overline{c} & \overline{b}
\end{pmatrix}}{\longrightarrow}
(R/\fkc)^{\oplus 2} \overset{
\begin{pmatrix}
-\overline{b} & \overline{a}\\
-\overline{c} & \overline{b}
\end{pmatrix}}{\longrightarrow}
(R/\fkc)^{\oplus 2}
 \to \cdots, 
$$
which shows that $I \nsubseteq \fkc$. Therefore,  $I+\fkc =\m$, since $\ell_R(R/\fkc) = 2$.

(3) To show assertion $(3)$, we notice that $\m/\fkc = (\overline{x_1}) = (\overline{a}, \overline{b})$, and $\ell_R(\m/\fkc)=1$. Hence $\m^2 \subseteq \fkc$. We set $J = (x_2, x_3, \ldots, x_n)$, and consider the following two cases.

{Case 1 ($\overline{a} \ne 0$).}  
Let us write $a = \alpha x_1 + \xi$ for some $\alpha \in R$ and $\xi \in J$. Then, since $\overline{a} \ne 0$, $\alpha \notin \m$ and $\overline{b} \in \fkm/\fkc = (\overline{a})$. Let $b = \beta a + \gamma$ with $\beta \in R$ and $\gamma \in \fkc$. Then, $I=(a,b) = (a, \gamma)$, whence replacing $b$ with $\gamma$, we may assume that $\alpha=1$ and $b \in \fkc$. Therefore,
$
\left(\begin{smallmatrix}
-\overline{b} & \overline{a}\\
-\overline{c} & \overline{b}
\end{smallmatrix}\right)=
\left(\begin{smallmatrix}
0 & \overline{x_1}\\
-\overline{c} & 0
\end{smallmatrix}\right),
$
so that $\overline{c} \ne 0$ by the exactness of the sequence ($E$). Consequently, writing $c= \delta x_1 + \rho$ with $\delta \notin \m$ and $\rho \in J$, we have $
\delta x_1^2 \equiv ac = b^2 \equiv 0 \ \text{mod} \ J.
$ 
Hence $x_1^2 \in J$, so that $\fkc =J$, as claimed.

{Case 2 ($\overline{a} = 0$).} Let $a = \alpha x_1^2 + \beta$ with $\alpha \in R$ and $\beta \in J$. Let us write $b = \gamma x_1 + \delta$ with $\gamma \in R$ and $\delta \in J$. Then, since $\m/\fkc = (\overline{b}) \ne (0)$, we get $\gamma \not\in \m$. Let $c= \rho x_1 + \eta$ with $\rho \in R$ and $\eta \in J$. Then, since $b^2 = ac$, we have
$
\gamma^2 x_1^2 \equiv \alpha \rho x_1^3 \ \text{mod}  \ J.
$
Hence, $x_1^2 \in J$, and  $\fkc = J$.
\end{proof}

\begin{cor}
Suppose that $(R,\m)$ is a $2$-${\rm AGL}$ ring with infinite residue class field. Let $I$ be an Ulrich ideal $I$ in $R$ with $\mu_R(I)=2$.  Then, there exists a minimal system $x_1, x_2, \ldots, x_n$ of generators of $\m$ and $b \in \fkc$ such that $\fkc = (x_2, x_3, \ldots, x_n)$ and $I = (x_1, b)$ with $I^2 = x_1I$.
\end{cor}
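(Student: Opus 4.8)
The plan is to read the statement off Theorem~\ref{2.3}. Applying that theorem (using Proposition~\ref{2.3a}(2) to write $\fkc=(x_1^2)+(x_2,\ldots,x_n)$ in the first place), we obtain a minimal system $y_1,y_2,\ldots,y_n$ of generators of $\m$ such that $\fkc=(y_2,y_3,\ldots,y_n)$, so in particular $\mu_R(\fkc)=n-1$ and $\mu_R(\m)=n$; moreover $I+\fkc=\m$, and since $\ell_R(R/\fkc)=2$ we have $\ell_R(\m/\fkc)=1$ and hence $\m^2\subseteq\fkc$. What remains is to arrange that the distinguished generator of $\m$ can simultaneously be taken inside $I$ and to generate a reduction of $I$, and then to absorb the second generator of $I$ into $\fkc$.

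The key step is to choose $a\in I$ with $a\notin\fkc$ such that $(a)$ is a reduction of $I$; since $I$ is an Ulrich ideal this forces $I^2=aI$. Here the hypothesis that $R/\m$ is infinite is used twice. First, a sufficiently general element of $I$ generates a minimal reduction of $I$ (as $I$ has analytic spread $\dim R=1$), and the admissible elements form the complement of a proper closed subset of $I/\m I$, a $2$-dimensional vector space over $R/\m$. Second, $I\nsubseteq\fkc$ by Theorem~\ref{2.3}(2), while $\m I\subseteq\m^2\subseteq\fkc$, so $(I\cap\fkc)/\m I$ is a proper subspace of $I/\m I$; its complement is again a nonempty open set. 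Intersecting the two open sets, which is possible because $R/\m$ is infinite, we obtain $a\in I$ with $a\notin\fkc$ and $I^2=aI$.

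Once $a$ is chosen, the rest is bookkeeping on top of Theorem~\ref{2.3}. Since $a\notin\fkc$ and $\ell_R(\m/\fkc)=1$, the ideal $(a)+\fkc$ is strictly larger than $\fkc$ and contained in $\m$, hence $(a)+\fkc=\m$; in particular $I\subseteq\m=(a)+\fkc$, so $I=(a)+(I\cap\fkc)$. As $(a)$ is a minimal reduction, $a\notin\m I$, so $\mu_R(I/(a))=\mu_R(I)-1=1$; since $I\cap\fkc$ surjects onto $I/(a)$, we may pick $b\in I\cap\fkc$ with $I=(a,b)$, and by construction $b\in\fkc$. Finally, set $x_1=a$ and let $x_2,\ldots,x_n$ be a minimal system of generators of $\fkc$. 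Then $\m=(a)+\fkc=(x_1,x_2,\ldots,x_n)$ is generated by $n=\mu_R(\m)$ elements, hence minimally, and we have $\fkc=(x_2,\ldots,x_n)$, $I=(x_1,b)$ with $b\in\fkc$, and $I^2=x_1I$, as required.

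The only real obstacle is the selection of $a$: an arbitrary minimal reduction of $I$ might happen to lie in $\fkc$, so one genuinely needs to combine the genericity of minimal reduction generators with the genericity of avoiding $\fkc$, and this is precisely the point where the infinite residue field hypothesis is unavoidable. All other steps are immediate consequences of Theorem~\ref{2.3} together with elementary length and generator counts.
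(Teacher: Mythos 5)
Your proof is correct and follows essentially the same route as the paper: invoke Theorem~\ref{2.3} and then use the infinite residue field to pick an element $a\in I$ that both generates a reduction of $I$ and lies outside $\fkc$, after which the remaining bookkeeping is forced. The paper packages the genericity step a little differently --- it writes $I=(a,b)$ with both $(a)$ and $(b)$ reductions (possible over an infinite residue field) and then takes whichever of $a,b$ lies outside $\fkc$ --- but this is the same idea as your intersection of the two open conditions in $I/\fkm I$.
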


\begin{proof} Choose a minimal system  $x_1, x_2, \ldots, x_n$ of generators of $\m$ such that $\fkc = (x_1^2) + (x_2, x_3, \ldots, x_n)$. Then, $\fkc = (x_2, x_3, \ldots, x_n)$ by Theorem \ref{2.3}. We write $I = (a,b)$ where both the ideals $(a), (b)$ are reductions of $I$. If $a \not\in \fkc$, then since $\m/\fkc = (\overline{a})$ where $\overline{*}$ denotes the image in $\m/\fkc$, we get $b = \alpha a + \beta$ with $\alpha \in R$ and $\beta \in \fkc$. Hence $I = (a,b) = (a, \beta)$ and $\m = (a) + \fkc$. If $a \in \fkc$, then $\m/\fkc = (\overline{b})$, so that $a = \alpha b + \beta$, whence $I = (\beta, b)$. 
\end{proof}

The following result is involved in \cite[Theorem 2.8]{GTT2}, since Cohen-Macaulay local rings of minimal multiplicity are G-regular (\cite{greg}). Let us give a brief proof in our context.

\begin{cor}\label{1.7}
Let $R$ be a $2$-${\rm AGL}$ ring and let $K$ be a canonical fractional ideal of $R$. Assume that $S=R[K]$ is a Gorenstein ring. If $R$ has minimal multiplicity, then $R$ contains no two-generated Ulrich ideals.
\end{cor}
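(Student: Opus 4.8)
The plan is to argue by contradiction, using Theorem \ref{2.3} together with the Gorenstein hypothesis on $S$ to compute the Cohen-Macaulay type of $R$ in two incompatible ways. So suppose $R$ contains an Ulrich ideal $I$ with $\mu_R(I)=2$, and choose a minimal system of generators $x_1,x_2,\ldots,x_n$ of $\m$ with $\fkc=(x_1^2)+(x_2,x_3,\ldots,x_n)$ as in Proposition \ref{2.3a}(2), where $n=\mu_R(\m)$ is the embedding dimension of $R$. By Theorem \ref{2.3} we have $\fkc=(x_2,x_3,\ldots,x_n)$, and in particular $\mu_R(\fkc)=n-1$.

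Next I would compute $\mu_R(\fkc)$ a second way, exploiting that $S$ is Gorenstein. Since $S$ is a module-finite birational extension of the one-dimensional Cohen-Macaulay ring $R$, it is itself Cohen-Macaulay of dimension one, and $\rmK_S\cong\Hom_R(S,K)$. Realized inside $\rmQ(R)$, this $\Hom$ is $K:S$, which equals $\fkc$ by Proposition \ref{2.3a}(1); hence $\fkc\cong\rmK_S$ as $S$-modules. As $S$ is Gorenstein, $\rmK_S\cong S$, so $\fkc\cong S$ as $S$-modules, and therefore as $R$-modules. Consequently $\mu_R(\fkc)=\mu_R(S)=\rmr(R)+1$ by Proposition \ref{2.3a}(5). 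Comparing with the first computation gives $\rmr(R)=n-2$.

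Finally I would bring in minimal multiplicity to get the contradiction. Let $(a)$ be a minimal reduction of $\m$; then $\m^2=a\m\subseteq(a)$, so $\m/(a)$ is annihilated by $\m$, hence a $k$-vector space, and since $R$ is not regular (it is not even Gorenstein, being $2$-$\AGL$) we have $R/(a)\neq k$, so $\soc(R/(a))=\m/(a)$. Therefore $\rmr(R)=\ell_R(\m/(a))=\ell_R(R/(a))-1=e(R)-1=n-1$, which contradicts $\rmr(R)=n-2$. Hence $R$ contains no two-generated Ulrich ideal.

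I do not anticipate a serious obstacle: the only points needing a little care are the standard identification $\rmK_S\cong K:S$ for module-finite extensions of Cohen-Macaulay rings of the same dimension, and the trivial remark that an $S$-linear isomorphism $\fkc\cong S$ is in particular $R$-linear, so that $\mu_R$ is preserved. An alternative, even shorter route would be to note that by Proposition \ref{2.2} the ideal $I$ is a totally reflexive $R$-module with $\pd_R I=\infty$, which is impossible because a Cohen-Macaulay local ring of minimal multiplicity is G-regular; but the argument above is the one that stays "in our context", using only Theorem \ref{2.3} and Proposition \ref{2.3a}.
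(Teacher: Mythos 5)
Your argument is correct and follows essentially the same route as the paper: you get $\mu_R(\fkc)=n-1$ from Theorem \ref{2.3}, then identify $\fkc = K:S \cong \Hom_R(S,K)$ with $\rmK_S \cong S$ using the Gorenstein hypothesis, so $\mu_R(\fkc)=\mu_R(S)=\rmr(R)+1$ by Proposition \ref{2.3a}(5), forcing $\rmr(R)=n-2$, which is incompatible with $\rmr(R)=n-1$ under minimal multiplicity. The alternative via G-regularity that you mention at the end is precisely what the paper cites as the pre-existing reason (from \cite{greg} and \cite[Theorem 2.8]{GTT2}) before offering this self-contained argument.
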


\begin{proof}
Because $R=K:K$ (\cite[Bemerkung 2.5]{HK}) and $KS = S$, 
$$\fkc = R : S= K :S \cong \Hom_R(S,K).$$ Therefore, since $S$ is a Gorenstein ring, we have $\fkc \cong S$, so that 
$n-1 = \mu_R(\fkc) = \mu_R(S) = \rmr(R) +1,
$ where the first (resp. third) equality follows from Theorem \ref{2.3} (resp. Proposition \ref{2.3a} (4)). Thus, $R$ doesn't have minimal multiplicity, because $\rmr(R) = n-1$ otherwise.
\end{proof}


The condition that $\fkc \in \calX_R$ is a strong restriction on $2$-${\rm AGL}$ rings $R$. We need the following, in order to see that $2$-AGL rings might contain Ulrich ideals, which are not two-generated.

\begin{prop}\label{1.6}
Suppose that $R$ is a $2$-${\rm AGL}$ ring, possessing a canonical fractional ideal $K$. Let $S = R[K]$ and set $\fkc = R:S$. Then the following conditions are equivalent.
\begin{enumerate}[${\rm (1)}$] 
\item[${\rm (1)}$] $\fkc \in \calX_R$.
\item[${\rm (2)}$] $S$ is a Gorenstein ring and $K/R$ is a free $R/\fkc$-module.\end{enumerate}
\end{prop}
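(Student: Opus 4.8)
The plan is to prove the two implications separately, using the structural results on $2$-$\AGL$ rings that have been assembled, most importantly Proposition \ref{2.3a}, Theorem \ref{mainref}, Theorem \ref{2.3}, and Corollary \ref{1.7}. The key algebraic identities in play are $\fkc = R:S = K:S = R:K$ (Proposition \ref{2.3a}(1)) together with $R = K:K$ (\cite[Bemerkung 2.5]{HK}) and $KS = S$, which give the chain of isomorphisms $\fkc = K:S \cong \Hom_R(S,K)$ that already appeared in the proof of Corollary \ref{1.7}. So the statement ``$S$ is a Gorenstein ring'' will always be used through the consequence $\fkc \cong S$ as $R$-modules (hence as $S$-modules), and in particular $\mu_R(\fkc) = \mu_R(S) = \rmr(R)+1$ by Proposition \ref{2.3a}(5).

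First I would prove $(2)\Rightarrow(1)$. Assume $S$ is Gorenstein and $K/R$ is $R/\fkc$-free. I want to show $\fkc$ is an Ulrich ideal. By Proposition \ref{2.3a}(2) write $\fkc = (x_1^2)+(x_2,\ldots,x_n)$ for a minimal system of generators $x_1,\ldots,x_n$ of $\m$; the plan is to pick a suitable non-zerodivisor $a \in \fkc$ and verify $\fkc^2 = a\fkc$ and that $\fkc/\fkc^2$ is $R/\fkc$-free, equivalently $a:_R\fkc = \fkc$. Since $\fkc \cong S$ as an $S$-module and $S$ is Gorenstein of dimension one, $\fkc$ is (isomorphic to) an invertible-modulo-its-conductor object; concretely, $\fkc$ is a canonical ideal of the Gorenstein ring $S$, hence principal as an $S$-module, say $\fkc = \xi S$ for some non-zerodivisor $\xi$. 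One then takes $a \in \fkc$ generating $\fkc$ over $S$ after a reduction; from $\fkc S = \fkc$ (as $\fkc$ is an $S$-ideal) we get $\fkc^2 = a\fkc$, and $a:_R \fkc = a:_R aS \cap (\cdots)$ — this is where I must be careful, computing $a :_R \fkc$ and checking it equals $\fkc$ rather than something larger, using $\mu_R(\fkc) = \rmr(R)+1 = \mu_R(S)$ and the freeness of $K/R$ to pin down lengths. The freeness of $K/R$ over $R/\fkc$ enters to guarantee $\ell_R(K/R) = 2(r-1)$ (the remark after Proposition \ref{2.3a}), which together with $\ell_R(S/R) = \ell_R(K/R)+2$ (Proposition \ref{2.3a}(3)) fixes $\ell_R(S/R) = 2r$; this numerical control is what forces $a:_R\fkc = \fkc$ and not a proper overring.

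For $(1)\Rightarrow(2)$, assume $\fkc \in \calX_R$. Since $R$ is a $2$-$\AGL$ ring, $\ell_R(R/\fkc) = 2$ by Theorem \ref{mainref}, so $\fkc$ is $\m$-primary and $\mu_R(\fkc) \le \dim_{R/\m}(\fkc/\m\fkc)$; I would first show $\mu_R(\fkc) \ge 3$, i.e. $\fkc$ is not two-generated, hence by Proposition \ref{Ulrich}(3) $\G\text{-}\dim_R R/\fkc = \infty$ — but I don't actually need that; rather I will argue directly. The key point is that $I := \fkc$ being Ulrich gives $\fkc^2 = a\fkc$ and $\fkc/\fkc^2$ free over $R/\fkc$, so $a:_R\fkc = \fkc$, i.e. $\fkc:\fkc = a^{-1}\fkc \cong \fkc$ and $R[\fkc^{-1}a\,] \cdots$ — more usefully, $\End_R(\fkc) = \fkc:\fkc$; since $\fkc = R:S$ is the conductor, $S \subseteq \fkc:\fkc$, and one shows equality $\fkc:\fkc = S$ (this uses that $S$ is the largest birational extension in which $\fkc$ is an ideal, together with $\fkc:\fkc = a^{-1}\fkc$ having the same minimal number of generators as $\fkc$). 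Thus $S = \fkc:\fkc = a^{-1}\fkc \cong \fkc$, so $\mu_R(S) = \mu_R(\fkc)$; combined with Proposition \ref{Ulrich}(1), $(\mu_R(\fkc) - 1)\rmr(R/\fkc) = \rmr(R)$, and $\mu_R(S) = \rmr(R)+1$ from Proposition \ref{2.3a}(5), I expect to deduce $\rmr(R/\fkc) = 1$, i.e. $R/\fkc$ is Gorenstein — and since $\ell_R(R/\fkc)=2$ this means $R/\fkc$ is a hypersurface, which via Theorem \ref{2.3}(3) (the conclusion $\fkc = (x_2,\ldots,x_n)$ and $x_1^2 \in (x_2,\ldots,x_n)$, forcing $R/\fkc$ to be $R/\m[x_1]/(x_1^2)$) says precisely that $K/R$ is $R/\fkc$-free, giving the second half of (2). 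For ``$S$ Gorenstein'': from $S \cong \fkc$ as $R$-modules and $\fkc$ an $S$-ideal, $S \cong \fkc \cong \Hom_R(S,K) = \Hom_R(S,\rmK_R) = \rmK_S$, so $S$ is its own canonical module, hence Gorenstein.

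The main obstacle I anticipate is the careful bookkeeping of lengths and minimal numbers of generators to pin down the equality $\fkc:\fkc = S$ (equivalently $a:_R\fkc = \fkc$) in both directions — distinguishing the Ulrich condition from the weaker ``$\fkc^2 = a\fkc$'' requires exactly the freeness of $K/R$, and conversely extracting Gorensteinness of $S$ from $\fkc \cong \rmK_S$ is clean but must be set up with the right identification $\Hom_R(S,\rmK_R) \cong \rmK_S$. I would lean on Theorem \ref{2.3} to handle the structural half (freeness of $K/R$) and on the isomorphism $\fkc \cong \Hom_R(S,K)$ from the proof of Corollary \ref{1.7} to handle the Gorenstein half, so that the genuinely new work is confined to the length computation linking $\mu_R(S) = \rmr(R)+1$, $\mu_R(\fkc)$, and the Ulrich equation $(\mu_R(\fkc)-1)\rmr(R/\fkc) = \rmr(R)$.
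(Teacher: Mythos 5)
Both directions of your proposal contain genuine gaps, and in both cases the difficulty is located exactly where the paper's argument does its real work.

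For $(2)\Rightarrow(1)$: you correctly reduce to $\fkc=\xi S$ and $\fkc^2=\xi\fkc$, but you then assert that the remaining Ulrich condition ``$\fkc/\fkc^2$ is $R/\fkc$-free'' is \emph{equivalent} to $a:_R\fkc=\fkc$. That equivalence is false once $\mu_R(\fkc)\ge 3$, which is the case here since $\mu_R(\fkc)=\rmr(R)+1\ge3$ (the paper's own preamble to Section~4 flags this: $Q:_RI=I$ only recovers the Ulrich condition when $\mu_R(I)=d+1$). The condition $a:_R\fkc=\fkc$ says $\fkc/(a)$ is a faithful $R/\fkc$-module, which over $R/\fkc\cong k[x]/(x^2)$ means it has a free summand, not that it is free. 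The paper instead notes $\fkc/\xi R\cong S/R$ (multiplication by $\xi$), so the question becomes whether $S/R$ is $R/\fkc$-free; since $S/R\cong K/R\oplus R/\fkc$ by Proposition~\ref{2.3a}(3), this is exactly the freeness of $K/R$. Your length bookkeeping \emph{could} be made to work if you aimed it at the right target (count $\ell_R(\fkc/\fkc^2)=2(\rmr(R)+1)$ and $\mu_R(\fkc)=\rmr(R)+1$, then use the classification of modules over $k[x]/(x^2)$ to force freeness), but as written it is aimed at $a:_R\fkc=\fkc$, which is the wrong and strictly weaker condition.

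For $(1)\Rightarrow(2)$: the final step, deducing ``$K/R$ is $R/\fkc$-free'' from $\rmr(R/\fkc)=1$ and Theorem~\ref{2.3}(3), does not work. First, $\rmr(R/\fkc)=1$ holds \emph{always} for a $2$-$\AGL$ ring: $R/\fkc$ is local of length $2$ with $\m^2\subseteq\fkc$, hence $R/\fkc\cong k[x]/(x^2)$ regardless of whether $\fkc\in\calX_R$. So the computation via Proposition~\ref{Ulrich}(1) is a consistency check, not new information. Second, Theorem~\ref{2.3} has as \emph{hypothesis} the existence of a two-generated Ulrich ideal, which you do not have (your $\fkc$ has $\mu_R(\fkc)\ge3$); its conclusions (1) ``$K/R$ free'' and (3) ``$\fkc=(x_2,\dots,x_n)$'' are separate consequences of that hypothesis and do not imply one another. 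Indeed Example~\ref{3.6} exhibits a $2$-$\AGL$ ring with $R/\fkc\cong k[x]/(x^2)$ but $K/R$ not $R/\fkc$-free, so the inference ``$R/\fkc$ a hypersurface $\Rightarrow K/R$ free'' is simply false. The paper's argument here is the point you are missing: from $\fkc=fS$ one gets $\fkc/\fkc^2\cong S/\fkc$; this is $R/\fkc$-free by the Ulrich hypothesis, the image of $1$ splits off a copy of $R/\fkc$, so $S/R$ is free, and then $S/R\cong K/R\oplus R/\fkc$ forces $K/R$ free. Separately, your route to Gorensteinness of $S$ via $\fkc:\fkc=S$ and $S\cong\fkc\cong\Hom_R(S,K)\cong\rmK_S$ is a legitimate alternative to the paper's citation of \cite[Corollary 3.8]{GMP}, but your stated reason for $\fkc:\fkc=S$ (``$S$ is the largest birational extension in which $\fkc$ is an ideal'') is backwards; the correct justification is that $S$ and $\fkc:\fkc$ are both maximal Cohen--Macaulay, $K:(\fkc:\fkc)=K:S=\fkc$ since $S\subseteq\fkc:\fkc$, and $K$-bidualization is the identity on such modules.
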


\begin{proof}
(2) $\Rightarrow$ (1) Since $\fkc = K:S \cong \Hom_R(S,K)$, we have $\fkc = fS$ for some $f \in S$, whence $\fkc^2 = f \fkc$. Therefore, $\fkc/\fkc^2$ is a free $R/\fkc$-module if and only if so is $S/R$, because $\fkc/fR \cong S/R$. The latter condition is equivalent to saying that $K/R$ is a free $R/\fkc$-module, which follows from the exact sequence 
$$
0 \to R/\fkc \to S/\fkc \to S/R \to 0
$$
and the fact that $S/R \cong K/R \oplus R/\fkc$ (Proposition \ref{2.3a} (3)). 

(1) $\Rightarrow$ (2) By \cite[Corollary 3.8]{GMP}, $S$ is a Gorenstein ring, since $\fkc^2 = f \fkc$ for some $f \in \fkc$. Therefore, $\fkc = fS$ for some $f \in \fkc$, since $\fkc = K:S$. Thus, $\fkc/\fkc^2 \cong S/fS = S/\fkc$, whence $S/\fkc$ is a free $R/\fkc$-module. Consequently, $K/R$ is a free $R/\fkc$-module, since  $S/R \cong K/R \oplus R/\fkc$.
\end{proof}

Let us explore an example, which shows the set $\calX_R$ depends on the characteristic of the base fields. For the ring stated in Example \ref{difficult}, we have the complete list of Ulrich ideals in it.


\begin{ex}\label{difficult}
Let $V = k[[t]]$ be the formal power series ring over a field $k$ and set $R = k[[t^6,t^8,t^{10},t^{11}]]$. Then the following assertions hold true. 
\begin{enumerate}[${\rm (1)}$]
\item[${\rm (1)}$] $R$ is a $2$-AGL ring with $\rmr(R) = 2$ and $S= k[[t^2, t^{11}]]$ is a Gorenstein ring with $\fkc = (t^6, t^8, t^{10}) \in \calX_R$. We have $\fkc = (x_2,x_3,x_4)$ and $x_1^2 \in \fkc$, where $x_1 =t^{11}, x_2=t^6, x_3=t^8$, and $x_4=t^{10}$.
\item[${\rm (2)}$] Let $I \in \calX_R$ and set $n = \mu_R(I)$. Then, $n =2, 3$, and $n=3$ if and only if $I = \fkc$.
\item[${\rm (3)}$] If $\ch k \ne 2$, then the set of two-generated Ulrich ideals is 
$$
\left\{(t^6 + c_1 t^8 + c_2 t^{10}, t^{11}) \mid c_1, c_2 \in k\right\} \cup \left\{(t^8+c_1t^{10} + c_2 t^{12}, t^{11})\mid c_1, c_2 \in k\right\}
$$
and we have the following. 
\begin{enumerate}[$(i)$]
\item $(t^6 + c_1 t^8 + c_2 t^{10}, t^{11})=(t^6 + d_1 t^8 + d_2 t^{10}, t^{11})$, only if $c_1 =d_1$ and $c_2 =d_2$. 
\item $(t^8+c_1t^{10} + c_2 t^{12}, t^{11}) = (t^8+d_1t^{10} + d_2 t^{12}, t^{11})$, only if $c_1 =d_1$ and $c_2 =d_2$.
\end{enumerate}
\item[${\rm (4)}$] If $\ch k = 2$, then the set of two-generated Ulrich ideals is 
\begin{eqnarray*}
&&\left\{(t^6 + c_1 t^8 + c_2 t^{10}, t^{11}) \mid c_1, c_2 \in k\right\} \cup \left\{(t^8+c_1t^{10} + c_2 t^{12}, t^{11} +d t^{12})\mid c_1, c_2, d \in k\right\} \\
&& \cup \left\{(t^6 + c_1t^8+c_2t^{11}, t^{10} + d t^{11})\mid c_1, c_2, d \in k, d \ne 0\right\}
\end{eqnarray*}
and we have the following. 
\begin{enumerate}[$(i)$]
\item $(t^6 + c_1 t^8 + c_2 t^{10}, t^{11})=(t^6 + d_1 t^8 + d_2 t^{10}, t^{11})$, only if $c_1 =d_1$ and $c_2 =d_2$. 
\item $(t^8+c_1t^{10} + c_2 t^{12}, t^{11} +d t^{12}) = (t^8+d_1t^{10} + d_2 t^{12}, t^{11} +e t^{12})$, only if $c_1 =d_1$, $c_2 =d_2$, and $d=e$.
\item $(t^6 + c_1t^8+c_2t^{11}, t^{10} + d t^{11})= (t^6 + d_1t^8+d_2t^{11}, t^{10} + e t^{11})$, only if $c_1 =d_1$, $c_2 =d_2$, and $d=e$.
\end{enumerate}
\item[${\rm (5)}$] The Ulrich ideals in $R$ generated by monomials in $t$ are $\left\{(t^6, t^{11}), (t^8, t^{11}), \fkc= (t^6,t^{8},t^{10})\right\}$.  
\end{enumerate}
\end{ex}

\begin{proof}
(1) Because $K = R + Rt^2$, we have $\rmr(R)=2$ and $S= k[[t^2, t^{11}]]$, so that $S$ is a Gorenstein ring, and $\fkc = (t^6,t^{8},t^{10})$, since $S = R + Rt^2 + Rt^4$. We have $\ell_R(S/K) = 2$, since $S/K = k{\cdot}\overline{t^4} + k{\cdot}\overline{t^{15}}$, where $\overline{t^4}$ and $\overline{t^{15}}$  denote the images of $t^4$ and $t^{15}$ in $S/K$, respectively. Therefore, $R$ is a $2$-AGL ring by Theorem \ref{mainref}. Because $K/R$ is a cyclic $R$-module, $K/R \cong R/\fkc$, whence $\fkc \in \calX_R$ by Proposition \ref{1.6}.

(2) Since $(n-1){\cdot}\rmr(R/I) = \rmr(R)=2$ by Proposition \ref{Ulrich} (1), we get $n = 2,3$. Suppose $n = 3$. Then, $\fkc \subseteq I$ by Proposition \ref{Ulrich} (2), since $K/R \cong R/\fkc$. On the other hand, if $\fkc \subsetneq I$, we then have by \cite[Theorem 3.1]{GIK} $\fkc = bcS$ for some $b, c \in \m$. This is, however, impossible, because $\fkc = t^6S$ and $b,c \in \m \subseteq t^6V$. Therefore,  $I= \fkc$, if $n=3$.

(3), (4) We denote by $\nu(*)$ the valuation of $V$. Let $I \in \calX_R$ and suppose that $\mu_R(I) =2$. Let us write $I=(a, b)$ where $a, b \in R$. First we may assume $I^2 = a I$ and $\nu(a) < \nu(b)$. We then have $\nu(a) < 11$. Indeed, if $\nu(a) \ge 12$, then  $a, b \in \fkc=(t^6, t^8, t^{10})$, so that $I \subseteq \fkc$, which is absurd (remember that $I + \fkc = \m$).  
Besides, we notice that $\nu(a)$ is even, because $I/(a) \cong R/I$ as an $R$-module. Therefore, $\nu(a) = 6, 8, 10$. In addition, we have the following.

\begin{claim}\label{4.12}
The following assertions hold true. 
\begin{enumerate}[$(i)$]
\item If $\nu(a) = 6$, then $\nu(b) = 10, 11$. 
\item If $\nu(a) = 8$, then $\nu(b) = 11$.
\item One has $\nu(a) \ne 10$.
\item If $\ch k \ne 2$, then $(\nu(a), \nu(b)) \ne (6, 10)$.
\end{enumerate}
\end{claim}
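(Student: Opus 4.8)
The plan is to work entirely with the value semigroup $v(I)=\{\,\nu(x)\mid 0\neq x\in I\,\}\subseteq H=\langle 6,8,10,11\rangle$, where $\nu$ is the $t$-adic valuation on $V=k[[t]]$, and to exploit the sharp colength forced by the Ulrich condition. The starting point is the identity $\ell_R(R/I)=\nu(a)/2$: since $I/(a)\cong R/I$ we get $\ell_R(R/(a))=2\,\ell_R(R/I)$, while $\ell_R(R/(a))=\nu(a)$ is a standard fact for numerical semigroup rings (the extension $R\hookrightarrow V$ is module-finite and birational, so $\ell_R(R/(a))=e((a);R)=\ell_V(V/aV)=\nu(a)$). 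Combined with $\ell_R(R/I)=\#(H\setminus v(I))$ this says: \emph{$v(I)$ omits exactly $\nu(a)/2$ elements of $H$, one of which is $0$.}

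Next I would record three constraints on $v(I)$. First, after replacing $b$ by $b-\lambda t^{h}a$ finitely many times — legitimate, and forced to terminate since otherwise $b\in(a)$ would contradict $\mu_R(I)=2$ — we may assume $\nu(b)\notin\nu(a)+H$; hence $\nu(b)$ lies in the finite set of $n\in H$ with $n>\nu(a)$ and $n\notin\nu(a)+H$. Second, for $x=ra+sb\in I$ either $\nu(x)=\min(\nu(r)+\nu(a),\,\nu(s)+\nu(b))$, or the two leading terms cancel and then $\nu(x)\ge d_{0}:=\min\bigl((\nu(a)+H)\cap(\nu(b)+H)\bigr)$; thus $v(I)\subseteq(\nu(a)+H)\cup(\nu(b)+H)\cup\{n\mid n\ge d_{0}\}$. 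Third, by Proposition \ref{2.2} we have $b^{2}=ac$ with $c\in I$, so $\nu(c)=2\nu(b)-\nu(a)\in v(I)$, which is a strong restriction whenever $2\nu(b)-\nu(a)<d_{0}$ and $2\nu(b)-\nu(a)\notin(\nu(a)+H)\cup(\nu(b)+H)$. I keep available throughout that $I\not\subseteq\fkc=(t^{6},t^{8},t^{10})$; since $R/\fkc$ has $k$-basis $\{1,\overline{t^{11}}\}$, this is precisely the statement that some generator of $I$ has a nonzero $t^{11}$-coefficient.

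Parts (i)–(iii) are then a finite check. For $\nu(a)\in\{6,8,10\}$ the admissible values of $\nu(b)$ are $\{8,10,11,19,21\}$, $\{10,11,12,17,21,23\}$, $\{11,12,14,17,19,23,25\}$ respectively, and each forbidden value falls to one of two mechanisms. Either $2\nu(b)-\nu(a)$ cannot belong to $v(I)$ (it lies below $d_{0}$ and in neither shifted semigroup), contradicting $\nu(c)\in v(I)$; this kills $(\nu(a),\nu(b))=(6,8),(8,10),(10,11),(10,12)$. Or else the union $(\nu(a)+H)\cup(\nu(b)+H)$ already omits more than $\nu(a)/2$ elements of $H$ that are $<d_{0}$, and such elements cannot lie in $v(I)$, contradicting $\#(H\setminus v(I))=\nu(a)/2$; this kills $(6,19),(6,21)$, $(8,12),(8,17),(8,21),(8,23)$, and $(10,14),(10,17),(10,19),(10,23),(10,25)$. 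What survives is $\nu(b)\in\{10,11\}$ when $\nu(a)=6$, $\nu(b)=11$ when $\nu(a)=8$, and nothing when $\nu(a)=10$; this is exactly (i), (ii), (iii).

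Part (iv) is the only place where the coarse bookkeeping does not suffice, and I expect the coefficient computation there to be the main obstacle. Assume $\ch k\neq2$ and $(\nu(a),\nu(b))=(6,10)$, normalized so that $a=t^{6}+\cdots$ and $b=t^{10}+\cdots$ have leading coefficient $1$. From $b^{2}=ac$ with $\nu(c)=14$ one gets $c=\alpha a+\beta b$ with $\nu(\alpha)=8$ and $\nu(\beta b)\ge16$; comparing $t^{20}$-coefficients shows the leading coefficient of $\alpha$ is $1$, and comparing $t^{21}$-coefficients shows $2\cdot(\text{coefficient of }t^{11}\text{ in }b)=0$, because the degree-$21$ components of both $\alpha a^{2}$ and $\beta ab$ vanish (using $7,9,13\notin H$). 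Hence in characteristic $\neq2$ the $t^{11}$-coefficient of $b$ is zero, so $I\not\subseteq\fkc$ forces the $t^{11}$-coefficient of $a$ to be nonzero. A direct inspection of the only cancellation degrees below $19$ in $(6+H)\cap(10+H)$, namely $16$ and $18$, then shows that once $b$ has no $t^{11}$-term no element $ra+sb$ of $I$ can have valuation $19$, so $19\notin v(I)$. But $\ell_R(R/I)=3$ together with $v(I)\supseteq(6+H)\cup(10+H)$ — a set omitting precisely $\{0,8,11,19\}$ from $H$ — forces $v(I)=H\setminus\{0,8,11\}$, hence $19\in v(I)$: a contradiction. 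The delicate step is this last one: verifying that forcing the low-degree terms of $ra+sb$ to cancel automatically annihilates its degree-$19$ term, which uses both $7,9,13\notin H$ and the vanishing of the $t^{11}$-coefficient of $b$.
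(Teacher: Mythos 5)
Your argument is correct, and it is organized differently from the paper's.  For (i)--(iii) the paper argues ad hoc: it picks specific monomials whose residues in $R/I$ are shown to be $k$-linearly independent (forcing $\ell_R(R/I)$ too large), and it writes out $b^2=a^2\varphi+ab\psi$ with $\varphi,\psi\in\m$ and compares valuations directly.  You instead set up three general constraints once and for all — $\#(H\setminus v(I))=\nu(a)/2$, the inclusion $v(I)\subseteq(\nu(a)+H)\cup(\nu(b)+H)\cup\{n\ge d_0\}$, and $2\nu(b)-\nu(a)\in v(I)$ — and then run a uniform finite check.  This is a genuine repackaging: the gap-count identity $\ell_R(R/I)=\#(H\setminus v(I))$ makes the linear-independence arguments automatic, and the $d_0$ constraint subsumes the ``$\rho,\eta\in\m$'' valuation bookkeeping the paper does case by case.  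The trade-off is that your method requires the normalization $\nu(b)\notin\nu(a)+H$ (which you justify correctly) and a longer list of candidates, but each candidate is dispatched mechanically.  I checked the candidate lists and both killing mechanisms in all cases; they are right, including the survivors $(6,10),(6,11),(8,11)$.

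For (iv), both you and the paper extract $2\,b_{11}=0$ from the $t^{21}$-coefficient of $b^2=ac$ (the vanishing of the $t^{21}$-parts of $\alpha a^2$ and $\beta ab$ is exactly the paper's computation, using $7,9,13\notin H$).  The closings differ: the paper first proves the $t^{11}$-coefficient of $b$ is \emph{nonzero} via a length count on $k[Y,W]/\fka$ and then contradicts it with $2d_1=0$; you first set it to zero (char $\ne2$) and then derive a contradiction from $19\notin v(I)$ versus $v(I)=H\setminus\{0,8,11\}$.  These are equivalent, and your route is arguably cleaner since it reuses the $v(I)$ machinery already in play.  Two small notes: (a) the step ``forces $v(I)=H\setminus\{0,8,11\}$'' implicitly uses, and should say, that $8,11\notin v(I)$ because $8,11<d_0=16$ and $8,11\notin(6+H)\cup(10+H)$ — otherwise $\{0,8,19\}$ or $\{0,11,19\}$ would be conceivable; and (b) the observation that $I\not\subseteq\fkc$ forces $a_{11}\neq0$ is correct but is never actually used in the subsequent cancellation analysis (the degrees $16$ and $18$ only bring in $r_{11}a_8$, which dies once the $t^{17}$-coefficient forces $r_{11}=0$), so it can be dropped.
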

 
\begin{proof}[Proof of Claim \ref{4.12}]
$(i)$ We first consider the case where $\nu(a) = 6$. Then we get $\nu(b) <12$. In fact, if $\nu(b) \ge 12$, then the images of $1, t^8, t^{10}, t^{11}$ in $R/I$ are linearly independent over the field $k$, so that $\ell_R(R/I) \ge 4$. This makes a contradiction, because $I/(a) \cong R/I$. Hence $\nu(b) \le 11$. We are now assuming that $\nu(b) = 8$. Since $b^2 = ac$ for some $c \in I$, we notice that $\nu(c) = 10$. Let us write $c = a \rho + b \eta$ where $\rho, \xi \in \m$. We then have $c \in t^{12}V$, which is impossible. Consequently, $\nu(b) = 10, 11$ as claimed.

$(ii)$ Suppose that $\nu(a) = 8$. By setting $b^2 = a^2 \varphi + ab \psi$ for some $\varphi, \psi \in \m$, we have $\nu(b) \ne 10$.  Let us write $a = t^8 + \alpha t^{10} + \beta t^{11} + \xi$, where $\alpha, \beta \in k$ and $\xi \in R$ with $\nu(\xi) \ge 12$. If $\nu(b) \ge 12$, then $b \in \fkc$, so that $a \notin \fkc$, because $I+\fkc = \fkm$. Hence $\beta \ne 0$ (remember that $\fkc=(t^6, t^8, t^{10})$). Therefore, if $\nu(b) \ge 14$ (resp. $\nu(b) = 12$), then the images of $1, t^6, t^8, t^{10}, t^{12}$ (resp. $1, t^6, t^8, t^{10}, t^{14}$) in $R/I$ are linearly independent over $k$, so that $\ell_R(R/I) \ge 5$, which makes a contradiction, because $R/I \cong I/(a)$. Hence $\nu(b) = 11$. 

$(iii)$ Let us assume that $\nu(a) = 10$. Since $b^2 \in (a^2, ab)$, we have $\nu(b) \ne 11, 12$, whence $\nu(b) \ge 14$. Thus $b \in \fkc$ and $a \notin \fkc$. Then the images of $1, t^6, t^8, t^{10}, t^{14}, t^{16}$ in $R/I$ are linearly independent over $k$, which is absurd.

$(iv)$ Suppose that $\nu(a) = 6$ and $\nu(a) = 10$. We may assume $a = t^6 + c_1t^8 + c_2t^{11} + c_3 t^{19}$ and $b= t^{10} + d_1t^{11} + d_2 t^{19}$, where $c_i, d_ j \in k$. Look at the isomorphism $R/I \cong k[Y, W]/\fka$, where $\fka$ is the ideal of $k[Y, W]$ generated by 

{\small
$$
(-c_1Y -c_2 W-c_3YW)^3 - Y(-d_1 W -d_2YW), Y^2 -(-c_1 Y -c_2 W -c_3 YW)(-d_1W -d_2YW),
$$
$$
(-d_1W - d_2 YW)^2 - (-c_1Y -c_2 W-c_3YW)^2Y, \ \ \text{and} \ \ W^2 -(-c_1Y -c_2 W-c_3YW)^2(-d_1W - d_2 YW).
$$}
\vspace{-1em}

\noindent
Hence $(Y, W)^3 + \fka = (Y, W)^3 + (Y^2, d_1YW, W^2)$. If $d_1 = 0$, then $\ell_R(R/I) \ge 4$, which is impossible. Therefore $d_1 \ne 0$. Since $I^2 = a I$, we can write $b^2 = a^2 \varphi + ab \psi$ for some $\varphi, \psi \in \m$.  By comparing the coefficients of $t^{21}$, we have $2d_1 = 0$, so that $\ch k =2$. Consequently, if $\ch k \ne 2$, then $(\nu(a), \nu(b)) \ne (6, 10)$, as desired.
\end{proof}

Notice that, for each $0 \ne f \in R$,  we have $t^{n+16}V \subseteq (f)$, where $n =\nu(f)$. It follows from the equalities $t^{n+16}V = fV {\cdot} t^{16}V = f{\cdot} (R:V)$ and the fact that $(R:V)$ is an ideal of $R$.

(3) Suppose that $\ch k \ne 2$. First we consider the case where $\nu(a) = 6$ and $\nu(b) = 11$.  Then $t^{33}V \subseteq (ab)$, so that $I = (t^6 + c_1 t^8 + c_2 t^{10}, t^{11})$ for some $c_1, c_2 \in k$. On the other hand, if we set $J = (t^6 + c_1 t^8 + c_2 t^{10}, t^{11})$ with $c_1, c_2 \in k$, then $J$ is an Ulrich ideal of $R$.  Let $a = t^6 + c_1 t^8 + c_2 t^{10}$. Notice that $t^n \in a J$ for each even integer $n \ge 18$, because $t^n = t^{n-12}{\cdot}a^2 - t^{n-12}{\cdot} (c_1^2t^{16} + \cdots + c_2^2 t^{20})$. Therefore, $J^2 = a J + (t^{22}) = a J$. Moreover, we have the isomorphism $R/J \cong k[Y, Z]/\fka$, where 
$$
\fka= \left((-c_1 Y-c_2Z)^3 -YZ, Y^2 - (-c_1 Y-c_2Z)Z, Z^2 -(-c_1 Y-c_2Z)^2Y, (-c_1 Y-c_2Z)Z\right)
$$ 
which yields $\ell_R(R/J) = 3$, because $\fka + (Y, Z)^3 = (Y, Z)^2$. Hence $R/J \cong J/(a)$, so that $J \in \calX_R$.

Let us assume $\nu(a) = 8$ and $\nu(b) = 11$. We may assume $a = t^8 + c_1t^{10} + c_2t^{12}$ and $b = t^{11} + dt^{12}$ where $c_1, c_2, d \in k$. The equality $I^2 = aI$ yields that $2d=0$ by comparing the coefficients of $t^{23}$. Hence $d=0$. Conversely, let $J = (t^8 + c_1t^{10} + c_2t^{12}, t^{11})$ for some $c_1, c_2 \in k$. Then $t^n \in a J$ for each even integer $n \ge 22$, where $a = t^8 + c_1t^{10} + c_2t^{12}$. We have the isomorphism $R/J \cong k[X, Z]/\fka$, where 
$$
\fka= \left(X^3 - (-c_1 Z-c_2X^2)Z, (-c_1 Z-c_2X^2)^2 - XZ, Z^2 -X^2(-c_1 Z-c_2X^2), -X^2Z\right)
$$ 
while $\fka = (X^3, Z^2, X^2Z, XZ) = (X^3, XZ, Z^2)$. Therefore, $\ell_R(R/J)=4$ and $J \in \calX_R$. The last assertions follow from the same technique as in the proof of Example \ref{3.2a}.

(4) Suppose that $\ch k = 2$. Thanks to the proof of (3), if $\nu(a) = 6, \nu(b) = 11$ (resp. $\nu(a) = 8, \nu(b) = 11$), then we have $I = (t^6 + c_1 t^8 + c_2 t^{10}, t^{11})$ (resp. $I = (t^8 + c_1 t^{10} + c_2 t^{12}, t^{11} + dt^{12})$) where $c_1, c_2 \in k$ (resp. $c_1, c_2, d \in k$). 

Let us assume $\nu(a) = 6$ and $\nu(b) = 10$. We then have $I = (t^6 + c_1 t^8 + c_2 t^{11} + c_3 t^{19}, t^{10} + d_1t ^{11} + d_2t^{19})$ for some $c_1, c_2, c_3, d_1, d_2 \in k$.
Consider the same isomorphism $R/I \cong k[Y, W]/\fka$ as in the proof of Claim \ref{4.12} $(iv)$. Then $(Y, W)^3 + \fka = (Y, W)^3 + (Y^2, d_1YW, W^2)$. Since $I \in \calX_R$, we have $\ell_R(R/I) = 3$, whence $d_1 \ne 0$ and $\fka = (Y, W)^2$.  Therefore, $YW \in \fka$ and $t^{19} \in I$. Consequently, $I = (t^6 + c_1 t^8 + c_2 t^{11}, t^{10} + d_1t ^{11})$. For the converse, let $J = (t^6 + c_1 t^8 + c_2 t^{11}, t^{10} + d_1t ^{11})$ and set $a=t^6 + c_1 t^8 + c_2 t^{11}$, where $c_1, c_2, d_1 \in k$ and $d_1 \ne 0$. Since $d_1 \ne 0$, we see that $\ell_R(R/J) = 3$ by the above isomorphism $R/J \cong k[Y, W]/\fka$. The fact that $t^n \in (a^2)$ for each even integer $n \ge 20$ implies $(t^{10} + d_1t ^{11})^2 \in (a^2)$. Hence $J^2 = aJ$, so that $J \in \calX_R$. Similarly for the proof of Example \ref{3.2a}, we have the last assertions.

(5) Follows from the assertions (2), (3), and (4).   
\end{proof}


Closing this section, since the ring as in Example \ref{difficult} is obtained from the gluing of the numerical semigroup $\left<3, 4, 5\right>$, let us explore the $2$-$\AGL$ rings arising as gluing of numerical semigroup rings. 

In what follows, let $0 < a_1, a_2, \ldots, a_{\ell} \in \Bbb Z~(\ell >0)$ be positive integers such that $\GCD(a_1, a_2, \ldots, a_{\ell}) =1$. We set $H_1 =\left<a_1, a_2, \ldots, a_{\ell}\right>$  and assume that $a_1, a_2, \ldots, a_{\ell}$ forms a minimal system of generators of $H_1$. Let $0<\alpha \in H_1$ be an odd integer such that $\alpha \ne a_i$ for every $1 \le i \le \ell$. We consider $H = \left< 2a_1, 2a_2, \ldots, 2a_{\ell}, \alpha \right>$ the gluing of $H_1$ and the set of non-negative integers $\Bbb N$. The reader is referred to \cite[Chapter 8]{RG} for basic properties of gluing of numerical semigroups.
Let $V=k[[t]]$ be the formal power series ring over a field $k$ and set $R_1 = k[[H_1]]$, $R=k[[H]]$ the semigroup rings of $H_1$ and $H$, respectively. We denote by $\m_1$ (resp. $\m$) the maximal ideal of $R_1$ (resp. $R$). Notice that $\mu_R(\m) = \ell + 1$ and $R$ is a free $R_1$-module of rank $2$. By letting ${\rm PF}(H_1) =\{p_1, p_2, \ldots, p_r\}$, the canonical fractional ideal $K_1$ of $R_1$ has the form $K_1 = \sum_{i=1}^r R_1{\cdot}t^{p_r-p_i}$, while $K = \sum_{i=1}^r R{\cdot}t^{2(p_r-p_i)}$ is the canonical fractional ideal of $R$, where $r = \rmr(R_1)$ and $p_r = f(H_1)$. We then have $R \otimes_{R_1} K_1 \cong K$ and hence $K/R \cong R \otimes_{R_1}(K_1/R_1)$ as an $R$-module. We set $\fkc = R:R[K]$. 

With this notation we have the following.

\begin{prop}\label{4.13} 
Suppose that $R_1$ is an $\AGL$ ring, but not a Gorenstein ring. Then the following assertions hold true. 
\begin{enumerate}[$(1)$]
\item $R$ is a $2$-$\AGL$ ring, $\fkc = \m_1 R$, and $\mu_R(\fkc) = \ell \ge 3$.
\item $\fkc \in \calX_R$ if and only if $R_1$ has minimal multiplicity.
\item  $R$ doesn't have minimal multiplicity. Therefore, $\m \notin \calX_R$.
\end{enumerate}
\end{prop}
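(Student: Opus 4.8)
The plan is to reduce the whole statement to known facts about the one‑dimensional $\AGL$ ring $R_1$ by base change along the finite flat extension $R_1 \hookrightarrow R$ produced by the gluing. Recall that this extension makes $R$ a free $R_1$-module of rank $2$, with $R = R_1 \oplus R_1 t^{\alpha}$ and $(t^{\alpha})^2 = t^{2\alpha} \in \m_1$ (indeed $2\alpha \in 2H_1$ and $2\alpha > 0$). Write $S_1 = R_1[K_1]$ and $\fkc_1 = R_1 : S_1$. Using flatness together with the identity $K \cong R \otimes_{R_1} K_1$ recorded in the preamble, one first gets $S = R[K] = R \otimes_{R_1} S_1$ (as subrings of $\mathrm{Q}(R)$), hence the $R$-module identifications $K/R \cong R \otimes_{R_1}(K_1/R_1)$, $S/K \cong R \otimes_{R_1}(S_1/K_1)$, and $S/R \cong R \otimes_{R_1}(S_1/R_1)$. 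From the theory of $\AGL$ rings (\cite{GMP}), since $R_1$ is not Gorenstein one has $\ell_{R_1}(S_1/K_1) = 1$, $K_1/R_1 \cong (R_1/\m_1)^{\oplus(r-1)}$ with $r = \rmr(R_1)$, $\m_1 S_1 = \m_1$, and $\fkc_1 = \m_1$; moreover $\rmK_{S_1} \cong \Hom_{R_1}(S_1,K_1) \cong \m_1$ as $S_1$-modules (using $K_1^2 = S_1$, $K_1 : K_1 = R_1$ from \cite[Bemerkung 2.5]{HK}, and $R_1 : K_1 = \m_1$).

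For (1): by the identifications above and $\ell_{R_1}(S_1/K_1) = 1$ we get $S/K \cong R \otimes_{R_1}(R_1/\m_1) = R/\m_1 R$, and $\ell_R(R/\m_1 R) = \dim_k(R \otimes_{R_1} k) = 2$; hence $R$ is a $2$-$\AGL$ ring by Theorem \ref{mainref}. Moreover $S/R \cong R \otimes_{R_1}(S_1/R_1)$ is annihilated by $\m_1$ (because $\fkc_1 = \m_1$ annihilates $S_1/R_1$), so $\m_1 R \subseteq R : S = \fkc$; comparing $R$-colengths, $\ell_R(R/\m_1 R) = 2 = \ell_R(R/\fkc)$ by Theorem \ref{mainref}, whence $\fkc = \m_1 R$. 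Finally, since $t^{2\alpha} \in \m_1$ one checks directly that $\m_1 R/\m{\cdot}\m_1 R \cong \m_1/\m_1^2$, so $\mu_R(\fkc) = \mu_{R_1}(\m_1) = \ell$; and $\ell \ge 3$ because a one-dimensional Cohen-Macaulay local ring of embedding dimension at most $2$ is a complete intersection, hence Gorenstein.

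For (2): the computation in (1) also gives $K/R \cong (R/\m_1 R)^{\oplus(r-1)} = (R/\fkc)^{\oplus(r-1)}$, so $K/R$ is automatically a free $R/\fkc$-module; by Proposition \ref{1.6}, $\fkc \in \calX_R$ if and only if $S$ is a Gorenstein ring. Now $S = R \otimes_{R_1} S_1$ identifies with the hypersurface extension $S_1[T]/(T^2 - t^{2\alpha})$ of $S_1$, which is faithfully flat over $S_1$ with Gorenstein fibres, so $\rmK_S \cong \rmK_{S_1} \otimes_{S_1} S$ and hence $S$ is Gorenstein if and only if $S_1$ is Gorenstein. It remains to prove, for the non-Gorenstein $\AGL$ ring $R_1$, that $S_1$ is Gorenstein if and only if $R_1$ has minimal multiplicity: if $R_1$ has minimal multiplicity then $\m_1^2 = f\m_1$ for a minimal reduction $(f)$ of $\m_1$, i.e.\ $\fkc_1^2 = f\fkc_1$, so $S_1$ is Gorenstein by \cite[Corollary 3.8]{GMP}; conversely, if $S_1$ is Gorenstein, then $\m_1 \cong \rmK_{S_1}$ is a principal ideal $fS_1$ of $S_1$, so $\m_1^2 = f\m_1 S_1 = f\m_1$ and $R_1$ has minimal multiplicity.

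For (3): by the elementary formula for the multiplicity of a gluing, $\mathrm{e}(R) = \min\{2\,\mathrm{e}(R_1),\,\alpha\}$ (the least nonzero element of $2H_1 + \alpha\mathbb{N}$). Abhyankar's inequality gives $\mathrm{e}(R_1) \ge \mu_{R_1}(\m_1) = \ell$, and since $\alpha \in H_1$ is not a minimal generator it is a sum of at least two minimal generators, so $\alpha \ge 2\,\mathrm{e}(R_1) \ge 2\ell$. Therefore $\mathrm{e}(R) \ge 2\ell > \ell + 1 = \mu_R(\m)$ because $\ell \ge 3$, so $R$ does not have minimal multiplicity; as $R$ is not regular, Definition \ref{2.1} gives $\m \notin \calX_R$. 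The step I expect to be the main obstacle is (2): keeping the Gorenstein-descent bookkeeping for the quadratic free extension $S/S_1$ honest, and isolating the equivalence ``$S_1$ Gorenstein $\Leftrightarrow$ $R_1$ has minimal multiplicity'' cleanly from the $\AGL$ theory of \cite{GMP}.
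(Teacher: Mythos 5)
Your proof is correct, and it diverges from the paper's in an interesting way, most notably in part (2). The paper proves (1) directly via $\fkc = \operatorname{Ann}_R(K/R) = (\operatorname{Ann}_{R_1}(K_1/R_1))R = \m_1 R$, rather than the colength comparison you use, but that is a cosmetic difference. For (2) the paper never looks at $S$ at all: it computes $\fkc/\fkc^2 \cong (R/\fkc)^{\oplus \ell}$ directly by base change, so that $\fkc \in \calX_R$ reduces to the stability condition $\fkc^2 = f\fkc$, which it then identifies with $\m_1^2 = t^{2a_i}\m_1$, i.e.\ minimal multiplicity of $R_1$. Your route through Proposition~\ref{1.6}, the quadratic hypersurface descent $S = S_1[T]/(T^2 - t^{\alpha})$, and the equivalence ``$S_1$ Gorenstein $\Leftrightarrow$ $R_1$ has minimal multiplicity'' from the AGL theory of \cite{GMP} is longer but exposes more structure and would generalize to other flat Gorenstein base changes; the paper's computation is shorter and self-contained. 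For (3) your observation that $\alpha$ is not a minimal generator of $H_1$ forces $\alpha > 2\rme(R_1)$, so that $\rme(R) = 2\rme(R_1) \ge 2\ell > \ell + 1$, is cleaner than the paper's two-case analysis; in fact it shows the paper's second case $\rme(R) = \alpha$ is vacuous under the stated hypotheses, which the paper instead rules out by a separate parity argument.

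One small caveat: in (2) you appeal to $\rmK_{S_1}\cong \fkc_1$ via $K_1:S_1$; this does hold (one shows $K_1:S_1 = R_1:S_1 = \fkc_1$ using $K_1:K_1 = R_1$), but the parenthetical ``$K_1^2 = S_1$'' you cite is not what is actually needed there, so it would be better to drop it or to cite the identity $\fkc_1 = K_1:S_1$ directly. Also, for the claim $\mu_R(\m_1 R) = \ell$ you should make explicit, as you do, that $t^{2\alpha}\in\m_1$, so that $\m\cdot\m_1 R$ kills the $t^\alpha$-component of $\m_1 R$ modulo $\m_1^2$; the paper states $\mu_R(\fkc)=\ell$ without comment, so your spelled-out argument is a useful supplement.
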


\begin{proof}
(1) Since $R$ is a free $R_1$-module of rank $2$ and $\ell_R(R/\m_1 R) =2$, we conclude that $R$ is a $2$-$\AGL$ ring (\cite[Theorem 3.10]{CGKM}). Besides, we have $\fkc = \operatorname{Ann}_RK/R = (\operatorname{Ann}_{R_1}{K_1}/{R_1})R = \m_1 R$, whence $\mu_R(\fkc) = \ell \ge 3$.

(2) The isomorphisms $\fkc/\fkc^2 \cong R \otimes_{R_1}(\m_1/{{\m_1}^2}) \cong R\otimes_{R_1}(R_1/\m_1)^{\oplus \ell} \cong (R/\fkc)^{\oplus \ell}$ show that $\fkc/\fkc^2$ is a free $R/\fkc$-module. Hence, $\fkc \in \calX_R$ if and only if $\fkc^2 = f \fkc$ for some $f \in \fkc$. The latter condition is equivalent to saying that $\fkc^2 = t^{2a_i}\fkc$ for some $1 \le i \le \ell$, that is ${\m_1}^2 = t^{2a_i}\m_1$, as desired.

(3) We notice that $\mu_R(\m) = \ell + 1$ and $\rme(R) =\min\{2a_1, 2a_2, \ldots, 2a_{\ell}, \alpha\}$. Suppose that $\rme(R) = 2a_i$ for some $1 \le i \le \ell$. Since $\ell = \mu_{R_1}(\m_1) \le \rme(R_1) \le a_1$, we get 
$$
\rme(R) - \mu_R(\m) = 2a_i - (\ell + 1) \ge 2 \ell -  (\ell + 1) = \ell -1 \ge 2
$$
which implies that $R$ doesn't have minimal multiplicity. Thereafter, we consider the case where $\rme(R) = \alpha$. Suppose that $R$ has minimal multiplicity, that is $\rme(R) = \mu_R(\m)$, in order to seek a contradiction. Since $\alpha$ is an odd integer, we notice that $\ell$ is even, because $\alpha = \rme(R) = \mu_R(\m) = \ell + 1$. Besides, $\alpha < 2 a_i$ for each $1 \le i \le \ell$. Let us write $\alpha = \alpha_1 a_1 + \alpha_2 a_2 + \cdots + \alpha_{\ell}a_{\ell}$ where $\alpha_i \ge 0$. Then one of the $\{\alpha_i\}_{1 \le i \le \ell}$ is positive. Therefore, $\alpha = \alpha_ia_i$ for some $1 \le i \le \ell$, so that $\alpha_i = 1$ and $\alpha = a_i$. This makes a contradiction. Hence $R$ doesn't have minimal multiplicity.
\end{proof}

Consequently, we have the following.

\begin{thm}\label{4.14}
Suppose that $R_1$ is an $\AGL$ ring, but not a Gorenstein ring. Then the following assertions hold true. 
\begin{enumerate}[$(1)$]
\item Let $I \in \calX_R$. Then either $\mu_R(I) = 2$ or $I = \fkc$. 
\item The set of two-generated Ulrich ideals which are generated by monomials in $t$ is 
$$
\left\{ (t^{2m}, t^{\alpha}) \mid 0<m \in H_1,\ \alpha - m \in H_1,\ 2(\alpha - 2m) \in H\right\}.
$$
\end{enumerate}
\end{thm}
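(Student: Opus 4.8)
The plan is to prove both parts by leveraging the structural results already available: Proposition~\ref{Ulrich}(1) together with the Cohen--Macaulay type computation for the glued ring, and Theorem~\ref{2.3} for the two-generated case. For part (1), I would first recall from Proposition~\ref{4.13}(1) that $R$ is a $2$-$\AGL$ ring with $\rmr(R)=\rmr(R_1)=r$, because $R$ is free of rank $2$ over $R_1$ with $\ell_R(R/\m_1 R)=2$ and the type is preserved under such flat base change (this is the content of \cite[Theorem 3.10]{CGKM} as used there). Now let $I\in\calX_R$ and set $n=\mu_R(I)$. By Proposition~\ref{Ulrich}(1) we have $(n-1)\cdot\rmr(R/I)=\rmr(R)=r$, so $n-1$ divides $r$. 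Since $R_1$ is an $\AGL$ ring that is not Gorenstein, $K_1/R_1\cong (R_1/\m_1)^{\oplus(r-1)}$ (this is the hallmark of non-Gorenstein $\AGL$ rings, cf.\ \cite[Theorem 3.11]{GMP}); hence $K/R\cong R\otimes_{R_1}(K_1/R_1)\cong (R/\m)^{\oplus(r-1)}$, which is \emph{not} $R/\fkc$-free unless $r-1=0$, impossible since $r\ge 2$ by Proposition~\ref{4.13}(1) (indeed $\mu_R(\fkc)=\ell\ge 3$ forces $r\ge 2$). Therefore Theorem~\ref{2.3}(1) shows that $R$ contains no two-generated Ulrich ideal, i.e.\ $n\ne 2$. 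Combined with $n-1\mid r$, if $r$ were forced to be small this would pin down $n$; but in fact I expect the cleaner route is: whenever $n\ge 3$, Proposition~\ref{Ulrich}(2) applies (there is a surjection $R\to K$ with cokernel $K/R$ whose annihilator is $\fkc$), giving $\fkc=\Ann_R(K/R)\subseteq I$; then the argument as in Example~\ref{difficult}(2) shows $I=\fkc$ --- namely if $\fkc\subsetneq I$ one would get $\fkc=bcS$ for some $b,c\in\m$ by \cite[Theorem 3.1]{GIK}, contradicting $\fkc=\m_1 R=t^{2a_{i_0}}S$-type generation where every $b,c\in\m\subseteq t^{e(R)}V$ has valuation too large. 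So $I=\fkc$ whenever $\mu_R(I)\ge 3$, proving (1).

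For part (2), I would characterize the monomial two-generated Ulrich ideals directly. Let $I=(t^{2m},t^\alpha)$ with $2m,\alpha\in H$; since $\alpha$ is the unique odd generator and $2m$ is even, and since by Proposition~\ref{2.2} an Ulrich ideal with $\mu=2$ has $I/(a)\cong R/I$, the reduction must be the even-valued generator, so $(a)=(t^{2m})$ with $2m\in 2H_1$, i.e.\ $m\in H_1$. The condition $I^2=t^{2m}I$ with $I=(t^{2m},t^\alpha)$ reduces to $t^{2\alpha}\in t^{2m}I=(t^{4m},t^{2m+\alpha})$, and since $2\alpha$ is even while $2m+\alpha$ is odd, this means $2\alpha-4m\in H$, i.e.\ $2(\alpha-2m)\in H$. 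Finally the freeness condition $I/I^2$ free over $R/I$ --- equivalently $(t^{2m}):_R I = I$ --- together with $n=\mu_R(I)=2$ forces, via Proposition~\ref{Ulrich}(3), that $\Gdim_R R/I<\infty$, which is automatic here; the remaining constraint is that $t^\alpha\notin(t^{2m})$ (so that $I$ is genuinely two-generated), i.e.\ $\alpha-2m\notin H$, but one also needs $I\ne(t^{2m})$; and $t^\alpha\in I$ with $t^\alpha$ having odd valuation while $\alpha\in H$. Checking that $\alpha-m\in H_1$ is precisely the statement that $t^\alpha=t^m\cdot t^{\alpha-m}$ lies in the right "shape" --- more precisely, I would derive $\alpha-m\in H_1$ from the requirement that $t^{2m}t^\alpha$-type products land correctly, translating $R/I\cong R_1/(\text{something})$ via the freeness of $R$ over $R_1$. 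The three conditions $0<m\in H_1$, $\alpha-m\in H_1$, $2(\alpha-2m)\in H$ should then be seen to be exactly equivalent to $(t^{2m},t^\alpha)$ being an Ulrich ideal with two generators, using part (1) to exclude the possibility $\mu_R=3$ for a genuinely two-generated ideal.

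The main obstacle I anticipate is the precise bookkeeping in part (2) to show that the stated three semigroup conditions are not merely necessary but also \emph{sufficient}: one must verify that under these hypotheses $I^2=t^{2m}I$ (the colon/reduction condition) genuinely holds --- in particular that the "tail" monomials $t^n$ for large even $n$ all lie in $t^{2m}I$, as was done by hand in Example~\ref{difficult}(3),(4) via identities like $t^n=t^{n-4m}(t^{2m})^2-(\text{lower order correction})$ --- and that $I/(t^{2m})\cong R/I$ as $R$-modules, which amounts to a length count $\ell_R(R/I)=\ell_R(I/(t^{2m}))$ plus cyclicity of $I/(t^{2m})$. The gluing structure ($R$ free of rank $2$ over $R_1$, $H=\langle 2H_1,\alpha\rangle$) should make the length count tractable: one can compute $\ell_R(R/(t^{2m},t^\alpha))$ in terms of $H_1$-data, using that $\{1,t^\alpha,t^{2\alpha},\dots\}$ interleaves with $2H_1$. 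I would organize the sufficiency direction as: (i) show $\alpha-m\in H_1$ gives $t^\alpha\cdot\m\subseteq(t^{2m})+t^\alpha t^{2a_i}$-terms so that $I$ is stable in the needed sense; (ii) show $2(\alpha-2m)\in H$ gives $t^{2\alpha}\in t^{2m}I$, hence $I^2=t^{2m}I$ after absorbing even tails; (iii) conclude $I\in\calX_R$ by the freeness criterion, then invoke part (1) to note $I$ is necessarily two-generated (since $I\ne\fkc$ as $\fkc$ is not monomial-two-generated). The converse direction --- extracting the three conditions from a given monomial Ulrich ideal --- follows the valuation arguments in Claim~\ref{4.12} essentially verbatim.
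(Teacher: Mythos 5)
There is a genuine error near the start of your argument for part (1). You compute
\[
K/R \;\cong\; R\otimes_{R_1}(K_1/R_1)\;\cong\; (R/\m)^{\oplus(r-1)},
\]
but this is wrong: since $R_1$ is a one-dimensional non-Gorenstein $\AGL$ ring one has $K_1/R_1\cong (R_1/\m_1)^{\oplus(r-1)}$, and then
\[
R\otimes_{R_1}(R_1/\m_1)\;=\;R/\m_1 R\;=\;R/\fkc,
\]
so in fact $K/R\cong (R/\fkc)^{\oplus(r-1)}$, which \emph{is} $R/\fkc$-free (this is exactly what makes the correspondence with Proposition \ref{1.6} and Proposition \ref{4.13}(2) work). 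Your misidentification $R/\m_1 R = R/\m$ then leads you, via Theorem \ref{2.3}(1), to conclude that $R$ has no two-generated Ulrich ideals --- which flatly contradicts assertion (2) of the very theorem you are proving (see also Example \ref{4.16}). So that branch of the argument must be discarded, not merely de-emphasized as you do.

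After your pivot the idea is sound --- use Proposition \ref{Ulrich}(2) to get $\fkc\subseteq I$ when $\mu_R(I)\ge 3$ --- but the way you finish is not robust: the \emph{bcS}-valuation argument you import from Example \ref{difficult}(2) relies on $\fkc$ being a principal $S$-ideal generated by a monomial of small valuation, which in the gluing setup requires $R_1$ to have minimal multiplicity (Proposition \ref{4.13}(2)); it does not hold in general. The paper's route is both shorter and unconditional: once $\fkc\subseteq I$, the fact that $\ell_R(R/\fkc)=2$ forces $I=\fkc$ or $I=\m$, and $\m\notin\calX_R$ by Proposition \ref{4.13}(3) because $R$ fails to have minimal multiplicity. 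You should replace the \emph{bcS} argument by this. For part (2), your sketch does track the paper's proof in outline (identify which generator has valuation $\alpha$, force the other to be even and hence $2m$ with $m\in H_1$, read off $2(\alpha-2m)\in H$ from $I^2=t^{2m}I$, derive $\alpha-m\in H_1$ from $R/I\cong R_1/t^mR_1$, then verify sufficiency); but the derivation of $\alpha-m\in H_1$ is left at the level of ``should be seen to be,'' whereas the paper gets it cleanly from the isomorphism $R/(t^{2m},t^\alpha)\cong R_1/(t^m,t^\alpha)$ together with $\ell_R(R/I)=m$, which forces $t^\alpha\in t^m R_1$. Filling in that step and the explicit check $I^2 = t^{2m}I + (t^{2\alpha}) = t^{2m}I$ would bring part (2) up to standard.
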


\begin{proof}
(1) Thanks to Proposition \ref{Ulrich} (2), if $\mu_R(I) \ge 3$, then $\fkc \subseteq I$. Since $R$ is a $2$-$\AGL$ ring and $\m \notin \calX_R$, we conclude that $I = \fkc$. 

(2) Let $I \in \calX_R$ such that $\mu_R(I) =2$ and $I$ is generated by monomials in $t$. We write $I = (t^p, t^q)$ where $0 < p < q$ and $p, q \in H$. Notice that, for each $0 < h \in H$ with $h \ne \alpha$, we have that $t^h \in \fkc$. Since $I + \fkc = \fkm$ by Theorem \ref{2.3} (2), we get $I \not\subseteq \fkc$, which yields that  $p=\alpha$ or $q = \alpha$. 
The isomorphism $R/I \cong I/(t^p)$ ensures that $p$ is even,  so that $\alpha = q$. Therefore $0 < p < \alpha$. Let us write $p = \sum_{i=1}^{\ell}\left(2a_i\right)c_i + c\alpha = 2\left(\sum_{i=1}^{\ell}a_ic_i\right) + c\alpha$ where $c_i, c \ge 0$. As $p < \alpha$, we have $c = 0$. Therefore, $p= 2m$ for some $0 < m \in H_1$. Moreover, because $I^2 = t^{2m}I$, we have $2(\alpha-2m) \in H$, but $\alpha-2m \notin H$. Since $R/I = R/(t^{2m}, t^{\alpha}) \cong R_1/(t^m, t^{\alpha})$ and $\ell_R(R/I) = m$, we obtain that $t^{\alpha} \in t^m R_1$. Hence $\alpha - m \in H_1$. 

Conversely, let $I = (t^{2m}, t^{\alpha})$ where $0< m_1 \in H_1$, $\alpha - m \in H_1$, and $2(\alpha - 2m) \in H$. We then have $I^2 = t^{2m}I + (t^{2\alpha}) = t^{2m}I$, while $R/I \cong R_1/(t^m, t^{\alpha}) = R_1/t^{m}R_1$, so that $\ell_R(R/I) = m$. Therefore $I \in \calX_R$, as desired.
\end{proof}

\begin{ex}\label{4.16}
Let $H_1 = \left<4, 7, 9\right>$ and $\alpha \ge 11$ an odd integer. We set $R_1=k[[t^4, t^7, t^9]]$ the numerical semigroup ring of $H_1$ over a field $k$. By Example \ref{3.5}, $R_1$ is an $\AGL$ ring with $\rmr(R_1) =2$. Let $H = \left<8, 14, 18, \alpha\right>$ and set $R=k[[H]]$. Then $\mu_R(I) = 2$ for each $I \in \calX_R$.  Moreover, we have the following.
\begin{enumerate}[$(1)$]
\item If $\alpha = 11, 13$, then $\calX_R = \emptyset$.
\item If $\alpha \ge 15$, then $(t^8, t^{\alpha}) \in \calX_R$. 
\item If $\alpha =15$ and $\ch k = 2$, then $(t^8 + ct^{14}, t^{\alpha}) \in \calX_R$ for every $c \in k$, and we have $(t^8 + c_1t^{14}, t^{\alpha}) = (t^8 + c_1t^{14}, t^{\alpha})$, only if $c_1 =c_2$.
\item If $\alpha \ge 17$, then $(t^8 + ct^{14}, t^{\alpha}) \in \calX_R$ for every $c \in k$, and we have $(t^8 + c_1t^{14}, t^{\alpha}) = (t^8 + c_1t^{14}, t^{\alpha})$, only if $c_1 =c_2$.
\end{enumerate}
\end{ex}


\section{G-regularity in $2$-${\rm AGL}$ rings}

The condition that $K/R$ is a free $R/\fkc$-module gives an agreeable restriction on the behavior of $2$-AGL rings, as we have shown in Proposition \ref{1.6} (see also \cite[Section 5]{CGKM}). However, even though $K/R$ is not $R/\fkc$-free, $2$-AGL rings also enjoy nice properties. We will show in the following  that every $2$-AGL ring $R$ is G{\it -regular} in the sense of \cite{greg}, namely, totally reflexive $R$-modules are all free, provided $K/R$ is not $R/\fkc$-free.

\begin{thm}\label{1.5}
Suppose that $R$ is a $2$-${\rm AGL}$ ring, possessing a canonical fractional ideal $K$. We set $\fkc = R : R[K]$, and assume that $K/R$ is not a free $R/\fkc$-module. Let $M$ be a finitely generated $R$-module. If $\Ext_R^p(M, R) =(0)$ for all $p \gg 0$, then $\pd_RM < \infty$. Hence $R$ is $G$-regular in the sense of \cite{greg}.
\end{thm}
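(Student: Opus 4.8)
The plan is to reduce to a maximal Cohen--Macaulay module with vanishing higher self-$\Ext$, and then to play the inclusion $R\subseteq K$ off against the decomposition of $K/R$ supplied by Proposition~\ref{2.3a}. First I would replace $M$ by a sufficiently high syzygy: choosing $t$ with $\Ext_R^p(M,R)=(0)$ for all $p\ge t+1$ and putting $N=\Syz_R^tM$ (a syzygy taken from a minimal free resolution of $M$), the usual dimension-shifting isomorphisms $\Ext_R^p(N,R)\cong\Ext_R^{p+t}(M,R)$ for $p\ge 1$ give $\Ext_R^p(N,R)=(0)$ for every $p>0$. Since $N$ is a submodule of a free module it is torsion-free, hence maximal Cohen--Macaulay because $\dim R=1$; and once $N$ is known to be free we obtain $\pd_RM\le t<\infty$. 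So it suffices to treat the case where $M$ itself is maximal Cohen--Macaulay and $\Ext_R^p(M,R)=(0)$ for all $p>0$.

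The key step is to apply $\Hom_R(M,-)$ to the short exact sequence
$$
0\longrightarrow R\longrightarrow K\longrightarrow K/R\longrightarrow 0
$$
(here $K\cong\rmK_R$). Because $M$ is maximal Cohen--Macaulay, $\Ext_R^p(M,\rmK_R)=(0)$ for all $p>0$; combined with $\Ext_R^p(M,R)=(0)$ for all $p>0$, the long exact sequence of $\Ext_R(M,-)$ forces $\Ext_R^p(M,K/R)=(0)$ for all $p>0$. Now I would invoke Proposition~\ref{2.3a}~(4): $K/R\cong(R/\fkc)^{\oplus\ell}\oplus(R/\m)^{\oplus m}$ with $\ell>0$ and $m\ge 0$, and the standing hypothesis that $K/R$ is \emph{not} a free $R/\fkc$-module forces $m\ge 1$. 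Hence $R/\m$ is an $R$-module direct summand of $K/R$, so $\Ext_R^p(M,R/\m)=(0)$ for all $p>0$. Computing $\Ext_R^\bullet(M,R/\m)$ from a minimal free resolution $F_\bullet\to M$ (all the maps of $\Hom_R(F_\bullet,R/\m)$ vanish by minimality) shows that $\dim_{R/\m}\Ext_R^p(M,R/\m)$ equals the $p$-th Betti number of $M$; thus $F_p=0$ for all $p>0$ and $M$ is free, i.e. $\pd_RM<\infty$. Finally, any totally reflexive $R$-module $G$ satisfies $\Ext_R^p(G,R)=(0)$ for all $p>0$, hence is free by what was just proved; therefore $R$ is $G$-regular in the sense of \cite{greg}.

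The argument is short, so there is no serious obstacle; the main care needed is in the two inputs used without proof, both standard and only to be cited: over a one-dimensional Cohen--Macaulay local ring a torsion-free (in particular a syzygy) module is maximal Cohen--Macaulay, and $\Ext_R^p(-,\rmK_R)$ vanishes in positive degrees on maximal Cohen--Macaulay modules. The single place where the hypotheses of the theorem are genuinely used is the passage from ``$K/R$ not $R/\fkc$-free'' to ``$R/\m$ is a direct summand of $K/R$'', which relies on the $2$-$\AGL$ structure result Proposition~\ref{2.3a}~(4).
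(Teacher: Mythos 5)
Your argument is correct and is essentially the paper's proof: both reduce to a (maximal Cohen--Macaulay) syzygy, apply $\Hom_R(-,-)$ to $0\to R\to K\to K/R\to 0$ using that higher $\Ext$'s into both $R$ and $K\cong\rmK_R$ vanish, and then use Proposition~\ref{2.3a}~(4) together with the non-$R/\fkc$-freeness of $K/R$ to pull out $R/\m$ as a direct summand of $K/R$, so the Betti numbers eventually vanish. The only cosmetic difference is that you pass to a high enough syzygy to get vanishing for all $p>0$ (hence a free module), whereas the paper stops at the first syzygy and argues with ``$p\gg 0$''; the content is identical.
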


\begin{proof}
Let $L=\Omega_R^1(M)$ be the first syzygy module of $M$. For every $p \ge 2$ we have 
$
\Ext_R^{p-1}(L, R) \cong \Ext_R^p(M, R),
$
which shows
$
\Ext_R^p(L, K/R) = (0)$ for all $p \gg 0$,
because  $\Ext_R^p(L,K) = (0)$. Therefore, since $R/\m$ is a direct summand of $K/R$ (Proposition \ref{2.3a} (4)), $\Ext_R^p(L, R/\m) =(0)$ for $p \gg 0$, so that $\pd_RL < \infty$. Hence  $\pd_R M < \infty$. 
\end{proof}

We should compare the following result with \cite[Theorem 2.14 (1)]{GTT2}, where a corresponding result for one-dimensional AGL rings is given.

\begin{cor}\label{4.3}
Suppose that $(R,\m)$ is a $2$-${\rm AGL}$ ring with minimal multiplicity, possessing a canonical fractional ideal $K$ and $\fkc = R: R[K]$. Then 
$$
\calX_R= \begin{cases}
\{\fkc, \m\}, &  \ \text{if} \ K/R~\text{is}~R/\fkc\text{-free},\\
\{\m\}, & \ \text{otherwise}.
\end{cases}
$$
\end{cor}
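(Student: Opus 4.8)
The plan is to combine the results already established in Section 4 with the basic structure theory of Ulrich ideals in dimension one. The key observation is that $\m$ is always an Ulrich ideal here, because a $2$-$\AGL$ ring with minimal multiplicity has multiplicity $>1$, so $\m^2 = Q\m$ for a minimal reduction $Q$, and the freeness of $\m/\m^2$ over $R/\m$ is automatic; hence $\m \in \calX_R$ unconditionally. It therefore remains to identify which other ideals, if any, lie in $\calX_R$, splitting according to whether $K/R$ is $R/\fkc$-free.

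First I would dispose of Ulrich ideals $I$ with $\mu_R(I) \geq 3$. By Proposition \ref{Ulrich}(3), such an ideal has $\operatorname{G-dim}_R R/I = \infty$; but since $R$ has minimal multiplicity it is $G$-regular (\cite{greg}), so $\operatorname{G-dim}_R R/I = \pd_R R/I$, forcing $\pd_R R/I < \infty$, hence $R$ regular — absurd, as $R$ is not even Gorenstein. Actually one can avoid even invoking $G$-regularity directly: Theorem \ref{1.5} applies when $K/R$ is not $R/\fkc$-free, and when $K/R$ \emph{is} $R/\fkc$-free one can argue as in Corollary \ref{1.7}'s circle of ideas, or simply note that $2$-$\AGL$ rings of minimal multiplicity are $G$-regular by \cite{greg} regardless. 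In any case, every Ulrich ideal $I \neq \m$ must satisfy $\mu_R(I) = 2$.

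Next I would analyze the two-generated Ulrich ideals via Theorem \ref{2.3}. If $R$ contains an Ulrich ideal $I$ with $\mu_R(I) = 2$, then Theorem \ref{2.3}(1) forces $K/R$ to be $R/\fkc$-free, and Theorem \ref{2.3}(3) gives $\fkc = (x_2, \dots, x_n)$ with $x_1^2 \in \fkc$. This already shows that in the "otherwise" case ($K/R$ not $R/\fkc$-free) there are no two-generated Ulrich ideals, so $\calX_R = \{\m\}$. For the case where $K/R$ is $R/\fkc$-free, I would invoke Proposition \ref{1.6}: freeness of $K/R$ over $R/\fkc$ together with... — wait, Proposition \ref{1.6} requires $S$ Gorenstein as well. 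Instead, when $R$ has minimal multiplicity and $K/R$ is $R/\fkc$-free, I would show $S = R[K]$ is automatically Gorenstein: from $\mu_R(S) = \rmr(R)+1 = n$ (Proposition \ref{2.3a}(5) and $\rmr(R) = n-1$ by minimal multiplicity) and the structure $\fkc = K:S \cong \Hom_R(S,K)$, together with $\mu_R(\fkc) = n-1$ (which follows since $\fkc = (x_2,\dots,x_n)$ once we know a two-generated Ulrich ideal exists) — but this is circular. The cleaner route: when $R$ has minimal multiplicity and $K/R$ is $R/\fkc$-free, use that $\fkc/\fkc^2 \cong S/R$ is $R/\fkc$-free (as in Proposition \ref{1.6}'s proof) plus $\fkc^2 \subseteq \m\fkc$; minimal multiplicity forces $S$ to have minimal multiplicity too, hence $\fkc$ is generated by a subsystem of a minimal generating set of $\m$, and then $\fkc^2 = f\fkc$ for a suitable generator $f$, giving $\fkc \in \calX_R$ by Proposition \ref{1.6}. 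Finally, I would show $\fkc$ and $\m$ are the \emph{only} members: any two-generated Ulrich ideal $I$ satisfies $I + \fkc = \m$ and, by the G-regularity-driven rigidity plus the explicit minimal resolution of Proposition \ref{2.2}, must coincide with one of $\fkc$ (if $\mu = n-1 = 2$, i.e. $n=3$) — no, this needs care: for $n \geq 4$, $\mu_R(\fkc) = n-1 \geq 3$, so $\fkc$ is not two-generated. The honest structure is: $\calX_R$ consists of $\m$, possibly $\fkc$ (when $\mu_R(\fkc)$ allows, but actually $\fkc \in \calX_R$ iff the Proposition \ref{1.6} condition holds, independent of two-generation), and any two-generated Ulrich ideals. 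Under minimal multiplicity, Corollary \ref{1.7} says: if $S$ is Gorenstein then \emph{no} two-generated Ulrich ideals exist. So the only candidates are $\m$ and $\fkc$, and $\fkc \in \calX_R \iff S$ Gorenstein and $K/R$ is $R/\fkc$-free $\iff K/R$ is $R/\fkc$-free (since minimal multiplicity forces $S$ Gorenstein in the free case, as above).

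Assembling: $\calX_R = \{\m\} \cup \{\fkc\}$ when $K/R$ is $R/\fkc$-free and $\calX_R = \{\m\}$ otherwise, since in the non-free case neither $\fkc$ (Proposition \ref{1.6}) nor any two-generated ideal (Theorem \ref{2.3}(1)) nor any $\geq 3$-generated ideal ($G$-regularity) can be Ulrich. The main obstacle I anticipate is the bookkeeping in the $K/R$ free, minimal multiplicity case: one must rule out two-generated Ulrich ideals other than $\fkc$ itself (handled cleanly by Corollary \ref{1.7}, which kills \emph{all} two-generated ones once $S$ is Gorenstein) and confirm $\fkc$ genuinely is Ulrich (Proposition \ref{1.6}, needing $S$ Gorenstein, which must be extracted from minimal multiplicity and $R/\fkc$-freeness of $K/R$ via the multiplicity/type count $\rmr(R) = n-1$ and $\mu_R(S) = n$). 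That extraction — showing $S$ is forced to be Gorenstein — is the subtle point, and I would handle it by noting $S$ has minimal multiplicity (inherited from $R$ having minimal multiplicity, via $\fkm S = \fkm\cdot$something and length counts) so that $\rmr(S) \leq$ its embedding dimension minus one, combined with $\rmr(R) = \mu_R(S) - 1$ pinning down $\rmr(S) = 1$.
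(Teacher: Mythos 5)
Your argument has a critical logical error at its very first step, and the error propagates. You claim to ``dispose of Ulrich ideals $I$ with $\mu_R(I)\geq 3$'' by noting that Proposition \ref{Ulrich}(3) gives $\operatorname{G\text{-}dim}_R R/I=\infty$, that $G$-regularity gives $\operatorname{G\text{-}dim}_R R/I=\pd_R R/I$, and hence that ``$\pd_R R/I<\infty$''. This is a non sequitur: from $\operatorname{G\text{-}dim}_R R/I=\infty$ and the equality $\operatorname{G\text{-}dim}=\pd$ one concludes $\pd_R R/I=\infty$, which causes no contradiction whatsoever. The conclusion you draw --- that every Ulrich ideal other than $\m$ is two-generated --- is in fact the exact opposite of the truth here, and your own later invocation of Corollary \ref{1.7} (``no two-generated Ulrich ideals exist when $S$ is Gorenstein'') contradicts it. $G$-regularity works against $\mu_R(I)=2$, not against $\mu_R(I)\geq 3$: for a two-generated Ulrich ideal, Proposition \ref{Ulrich}(3) gives $\operatorname{G\text{-}dim}_R R/I<\infty$, so $G$-regularity forces $\pd_R R/I<\infty$, hence (Auslander--Buchsbaum) $\pd_R R/I=1$ and $I$ principal, a contradiction.

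Because of this reversal, you never make the step that actually pins down $\calX_R$, namely Proposition \ref{Ulrich}(2) (i.e.\ \cite[Corollary 2.13]{GTT2}): once one knows every $I\in\calX_R$ has $\mu_R(I)\geq 3=\dim R+2$, the cokernel $C\cong K/R$ of $R\hookrightarrow K$ satisfies $\Ann_R C=\fkc\subseteq I$, and then $\ell_R(R/\fkc)=2$ forces $I\in\{\fkc,\m\}$. That is the paper's pigeonhole; without it there is no bound on the number or shape of candidates. Your final ``assembling'' sentence asserts that $G$-regularity rules out $\geq 3$-generated Ulrich ideals, which is false ($\fkc$ itself is $\geq 3$-generated and lies in $\calX_R$ in the free case). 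Separately, you spend considerable effort trying to re-derive that $S=R[K]$ is Gorenstein from minimal multiplicity and $R/\fkc$-freeness of $K/R$; the paper simply cites \cite[Corollary 5.3]{CGKM}, which gives $S$ Gorenstein for any $2$-AGL ring of minimal multiplicity (no freeness hypothesis needed), and your proposed length/multiplicity count is not carried to completion. The correct structure is: $\m\in\calX_R$; $\mu_R(I)\geq 3$ for all $I\in\calX_R$ (Corollary \ref{1.7} in the free case since $S$ is Gorenstein, Theorem \ref{2.3} in the non-free case); hence $\fkc\subseteq I$ and $I\in\{\fkc,\m\}$; finally $\fkc\in\calX_R$ if and only if $K/R$ is $R/\fkc$-free, by Proposition \ref{1.6} together with $S$ Gorenstein.
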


\begin{proof}
Since $R$ has minimal multiplicity, $\m \in \calX_R$, so that $\calX_R \ne \emptyset$.

$(1)$ Suppose that $K/R$ is $R/\fkc$-free. Then, by \cite[Proposition 5.7 (1)]{CGKM}, $\m:\m$ is a local ring, while $S=R[K]$ is a Gorenstein ring, since $R$ is a $2$-AGL ring with minimal multiplicity  (\cite[Corollary 5.3]{CGKM}). Therefore, thanks to Proposition \ref{1.6}, $\fkc =R:S \in \calX_R$, so that $\{\fkc, \m\} \subseteq \calX_R$. Let $I \in \calX_R$. Then, because $R$ has minimal multiplicity, $\mu_R(I) \ge 3$ by Corollary \ref{1.7}. Therefore, since  $K/R$ is $R/\fkc$-free, we get $\fkc = (0):_RK/R \subseteq I$ (\cite[Corollary 2.13]{GTT2}). Thus, $I = \fkc$ or $I = \m$, because $\ell_R(R/\fkc)=2$.

$(2)$ Suppose that $K/R$ is not $R/\fkc$-free and let $I$ be an Ulrich ideal of $R$. Then, $\mu_R(I) \ge 3$ by Theorem \ref{2.3}. Therefore, thanks to the proof of case (1), $I=\fkc$ or $I=\m$. Thus, $I = \fkm$, because $\fkc \not\in \calX_R$ by Proposition \ref{1.6}. 
\end{proof}

We close this paper with the following, where two kinds of $2$-${\rm AGL}$ rings of minimal multiplicity are given, one is $R/\fkc$-free and the other one is not.

\begin{ex}
Let $V=k[[t]]$ denote the formal power series ring over a field $k$ and set $R_1 =k[[t^3, t^7, t^8]]$, $R_2=k[[t^4, t^9, t^{11}, t^{14}]]$. Let $K_i$ be a canonical fractional ideal of $R_i$. Then, both $R_1$ and $R_2$ are $2$-AGL rings. We have $K_1/R_1$ is a free $R/\fkc_1$-module, but $K_2/R_2$ is not $R/\fkc_2$-free, where $\fkc_i = R_i : R_i[K_i]$. Therefore,  $\calX_{R_1}=\{(t^6, t^7, t^8), (t^3, t^7, t^8)\}$, and $\calX_{R_2}=\{(t^4, t^9, t^{11}, t^{14})\}$.
\end{ex}

\begin{proof}
We have $K_1 = R + Rt$ and $K_2 = R + Rt + Rt^5$. Hence, $R_1[K_1] = R[t] = V$, and $R_2[K_2] = R[t^3, t^5] = k[[t^3, t^4, t^5]]$, so that 
$
\ell_{R_1}({R_1[K_1]}/{K_1})  = \ell_{R_2}({R_2[K_2]}/{K_2}) = 2.
$
Therefore, by Theorem \ref{mainref}, both $R_1$ and $R_2$ are $2$-AGL rings. Because 
$
\ell_{R_1}(K_1/{R_1}) = 2 \ \ \text{and} \ \  \ell_{R_2}(K_2]/{R_2}) = 3,
$
$K_1/{R_1}$ is a free $R/\fkc_1$-module, but $K_2/R_2$ is not $R/\fkc_2$-free (use Proposition \ref{2.3a} (4)). Notice that $R_1$ and $R_2$ have minimal multiplicity $3$ and $4$, respectively. Hence, the results readily follow from Corollary \ref{4.3}, since $\fkc_1 =R_1:V= t^6 V = (t^6, t^7, t^8)$.
\end{proof}


\section{Appendix: Ulrich ideals in one-dimensional Gorenstein local rings of finite Cohen-Macaulay representation type}

In \cite{GOTWY}, the authors determined all the Ulrich ideals in one-dimensional Gorenstein local rings $R$ of finite CM-representation type, while in \cite[Section 12]{GTT} most birational module-finite extensions of these rings have been searched. Since the proof given by \cite{GOTWY} depends on the techniques in the representation theory of maximal Cohen-Macaulay modules, it might have some interests to give a straightforward proof, making use of the results of \cite[Section 12]{GTT} and determining the members of $\calA_R^0$  by Lemma \ref{lem3.1}, as well. We note it  as an appendix.

In this appendix, let $(R, \m)$ be a Gorenstein complete local ring of dimension one with algebraically closed residue class field $k$ of characteristic $0$. Suppose that $R$ has finite CM-representation type. Then, by \cite[(8.5), (8.10), and (8.15)]{Y} we get   
$$
R \cong k[[X, Y]]/(f),
$$
where $k[[X, Y]]$ is the formal power series ring over $k$, and $f$ is one of the following polynomials.
\begin{itemize}
\item[$(A_n)$] $X^2-Y^{n+1}$ $(n \ge 1)$
\item[$(D_n)$] $X^2Y-Y^{n-1}$ $(n \ge 4)$
\item[$(E_6)$] $X^3-Y^4$
\item[$(E_7)$] $X^3-XY^3$
\item[$(E_8)$] $X^3-Y^5$
\end{itemize}

With this notation we have the following.

\begin{thm}[{\cite[Theorem 1.7]{GOTWY}}]\label{5.1}
The set $\calX_R$ is given by the following.
\begin{enumerate}
\item[$(A_n)$] $\calX_R = 
\begin{cases}
\left\{(x, y^q) \mid 0 < q \le \ell \right\} & \text{if} \  n=2 \ell -1 \ \text{with} \  \ell \ge 1,\\
\left\{(x, y^q) \mid 0 < q \le \ell \right\} & \text{if} \ n=2\ell \ \text{with} \ \ell \ge 1.
\end{cases}$
\item[$(D_n)$] $\calX_R = 
\begin{cases}
\left\{(x^2, y), (x, y^{\ell+1})  \right\} & \text{if} \  n=2 \ell +3 \ \text{with} \ \ell \ge 1,\\
\left\{(x^2, y), (x-y^{\ell}, y(x+y^{\ell})), (x+y^{\ell}, y(x-y^{\ell})) \right\} & \text{if} \ n=2(\ell + 1) \ \text{with} \  \ell \ge 1.
\end{cases}$
\item[$(E_6)$] $\calX_R= \left\{(x, y^2)\right\}$
\item[$(E_7)$] $\calX_R = \left\{(x,y^3)\right\}$
\item[$(E_8)$] $\calX_R = \emptyset$
\end{enumerate}
where $x$ and $y$ denote the images of $X$ and $Y$ in the corresponding rings, respectively.
\end{thm}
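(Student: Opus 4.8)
The plan is to reduce the classification to a problem about birational extensions, via Lemma \ref{lem3.1}. Since $R$ is a one-dimensional Gorenstein complete local ring, that lemma provides mutually inverse bijections $\calA_R^0 \to \calX_R$, $A \mapsto R:A$, and $\calX_R \to \calA_R^0$, $I \mapsto I:I$, where $\calA_R^0$ is the set of birational module-finite extensions $A$ of $R$ that are Gorenstein rings with $\mu_R(A) = 2$. So it is enough, for each $f$ in the list, to determine $\calA_R^0$ and then compute $R : A$ for every $A \in \calA_R^0$. The main input is the explicit description, carried out in \cite[Section 12]{GTT}, of all rings $A$ with $R \subseteq A \subseteq \overline{R}$ for each of the ADE curve singularities: in each case $\overline{R}$ is a finite product of power series rings over $k$ (one factor if $f$ is irreducible, otherwise two or three), the conductor $R:\overline{R}$ is known, and the finitely many intermediate rings are listed by generators over $R$ inside $\overline{R}$. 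From such a list I would read off, for each $A$, the two invariants deciding membership in $\calA_R^0$: the number $\mu_R(A) = \ell_R(A/\m A)$, and whether $A$ is Gorenstein. For the latter, since $\rmK_R \cong R$ we have $\rmK_A \cong \Hom_R(A, R) \cong R : A$, and the fractional ideal $R:A$ is in fact an ideal of $A$; hence $A$ is Gorenstein precisely when $R:A$ is a principal ideal of $A$ — a concrete check using the known generators.

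Running through the list: in type $(A_n)$ the relation $x^2 = y^{n+1}$ shows that the extensions which are simultaneously Gorenstein and two-generated over $R$ are exactly $A = R[xy^{-q}] = y^{-q}(x, y^q)$ for $0 < q \le \ell$, the bound $q \le \ell$ coming from $A \subseteq \overline{R}$; then $R : A = (x, y^q)$, which yields the stated list in both parities. In type $(D_n)$ the polynomial factors as $Y(X^2 - Y^{n-2})$, so $\overline{R}$ splits off the $Y = 0$ branch; when $n-2$ is odd the admissible extensions give $(x^2, y)$ and $(x, y^{\ell+1})$, while when $n - 2 = 2\ell$ the further factorization $X^2 - Y^{2\ell} = (X - Y^\ell)(X + Y^\ell)$ creates a third branch and two extra ``mixed'' extensions, whose conductors back in $R$ are the non-monomial good ideals $(x - y^\ell,\, y(x + y^\ell))$ and $(x + y^\ell,\, y(x - y^\ell))$. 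For $(E_6)$ and $(E_7)$, going through the finitely many intermediate rings shows that exactly one proper extension lies in $\calA_R^0$, giving $(x, y^2)$ and $(x, y^3)$ respectively; and for $(E_8) = k[[t^3, t^5]]$ one checks that the only proper birational Gorenstein extensions are $k[[t^2, t^3]]$ and $\overline{R}$, each needing three generators over $R$, so $\calA_R^0 = \emptyset$ and $\calX_R = \emptyset$.

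The bulk of the argument — and the main obstacle — is exactly this case analysis: establishing the non-existence in type $(E_8)$, and verifying that the parametrized $D_n$-families together with their off-diagonal members exhaust $\calA_R^0$; by contrast, the one-branch cases and the small exceptional cases $E_6, E_7$ are short. Throughout, Proposition \ref{Ulrich}(1) serves as a built-in consistency check: for a Gorenstein $R$ it reads $(\mu_R(I) - 1)\,\rmr(R/I) = \rmr(R) = 1$, which forces $\mu_R(I) = 2$ and $R/I$ Gorenstein for every $I \in \calX_R$, matching both defining conditions on the other side of the bijection of Lemma \ref{lem3.1}.
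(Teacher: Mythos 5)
Your proposal follows essentially the same route as the paper: reduce via Lemma~\ref{lem3.1} to computing $\calA_R^0$, enumerate the birational intermediate rings using the explicit lists in \cite[Section 12]{GTT}, discard those that are non-Gorenstein or need more than two $R$-generators, and read off $\calX_R$ by taking conductors $R:A$. The only cosmetic difference is your handy criterion that $A$ is Gorenstein iff $R:A$ is a principal $A$-ideal (via $\rmK_A\cong R:A$), which the paper instead verifies case-by-case by direct inspection; this does not change the structure of the argument.
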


\begin{proof} For a ring $A$, let $J(A)$ denote its Jacobson radical. We denote by $\overline{R}$ the integral closure of $R$ in $\rmQ(R)$, and by $\calB_R$ the set of birational module-finite extensions of $R$.

{\bf (1)} ($E_6$) See Example \ref{3.2a}.


{\bf (2)} ($E_8$) Let $R=k[[t^3,t^5]]$ and $V=k[[t]]$. By \cite[Proposition 12.7 (3)]{GTT}, $\calB_R= \{R, k[[t^3,t^5, t^7]], k[[t^3,t^4,t^5]], k[[t^2,t^3]], V\}$, among which $k[[t^3,t^5, t^7]], k[[t^3,t^4,t^5]]$ are not Gorenstein rings, and $\mu_R(V)=\mu_R(k[[t^2,t^3]])=3$. Hence, $\calA_R^0=\emptyset$, so that $\calX_R= \emptyset$ by Lemma \ref{lem3.1}.


{\bf (3)} ($E_7$) Let $R=k[[X,Y]]/(X^3-XY^3)$. We set $S=k[[X, Y]]$, $V=k[[t]]$, and $f=X^3-XY^3$. Then, since $(f)=(X)\cap (X^2-Y^3)$, we get the tower 
$$
R=S/(f) \subseteq S/(X)\oplus S/(X^2-Y^3) = k[[Y]] \oplus k[[t^2, t^3]] \subseteq k[[Y]] \oplus V = \overline{R}
$$
of rings, where we identify $S/(X) = k[[Y]]$ and $S/(X^2-Y^3) = k[[t^2, t^3]] \subseteq V$.

\begin{claim}\label{5.2}
$
\calA_R = \{ R, k[[Y]] \oplus k[[t^2, t^3]], k[[Y]] \oplus V, k + J(\overline{R}) \}.
$
\end{claim}

\begin{proof}[Proof of Claim \ref{5.2}]
Let $A \in \calB_R$ such that $R \ne A$ and let $p_2 : \overline{R} \to V$ denote the projection. We set $B=p_2(A)$. Since $k[[t^2, t^3]] \subseteq B \subseteq V$, $B= k[[t^2, t^3]]$ or $B=V$. Suppose that $A$ is not a local ring. Then, $A$ decomposes into a direct product of local rings, since $A$ is a module-finite extension of the complete local ring $R$, so that  we may choose a non-trivial idempotent $e \in A$. Then, since $\overline{R} = k[[X]] \oplus V$, we get $e= (1,0)$, or $(0,1)$, whence $(1,0), (0,1) \in A$, so that $A=A(1,0) + A(0,1) = k[[Y]] \oplus B$. Suppose that $A$ is a local ring. In this case, the argument in \cite[Pages 2708--2710]{GTT} shows that if $B=V$, then 
$A \cong k[[Y, Z]]/(Z(Y-Z^2)) = k[[(Y, t^2), (0, t)]] =k + J(\overline{R})$, and that if $B=k[[t^2, t^3]]$, then $A$ is an AGL but  not a Gorenstein ring. Thus we have the assertion. 
\end{proof}

Since $J(\overline{R})=R(Y,t^2)+R(0,t)+R(0,t^2)$, we have $k+J(\overline{R})= R + R(0,t)+R(0,t^2)$, whence $\mu_R(k+J(\overline{R})) = 3$. Therefore, $\calA_R^0 = \left\{k[[Y]] \oplus k[[t^2, t^3]]\right\}$, so that  by Lemma \ref{lem3.1} $\calX_R=\{(x,y^3)\}$, since $R:(k[[Y]] \oplus k[[t^2, t^3]])=(x,y^3)$.

\medskip
{\bf (4)} ($D_n$) {\em $(\rm i)$ $($The case where $n=2 \ell + 3$ with $\ell \ge 1$$)$.}
Let $R=k[[X,Y]]/(X^2Y-Y^{2\ell +2})$. We set $S=k[[X, Y]]$, $V=k[[t]]$, and $f= Y(X^2-Y^{2\ell + 1})$. We consider the tower
$$
R=S/(f) \subseteq S/(Y)\oplus S/(X^2-Y^{2\ell + 1}) = k[[X]] \oplus k[[t^2, t^{2\ell + 1}]] \subseteq k[[X]] \oplus V = \overline{R}
$$
of rings, where we identify $S/(Y) = k[[X]]$ and $S/(X^2-Y^{2\ell + 1}) = k[[t^2, t^{2\ell + 1}]]$. We then have the following.

\begin{claim}\label{5.3}
$\calA_R = \left\{R, k+J(\overline{R})\right\} \cup \left\{k[[X]] \oplus k[[t^2, t^{2q+1}]] \mid 0 \le q \le \ell \right\}$
\end{claim}

\begin{proof}[Proof of Claim \ref{5.3}]
Let $A \in \calB_R$ such that $R \ne A$ and let $p_2 : \overline{R} \to V$ denote the projection. We set $B=p_2(A)$. Then, by \cite[Corollary 12.5 (1)]{GTT} $B= k[[t^2, t^{2q+1}]]$ for some $0 \le q \le \ell$, since $k[[t^2, t^{2\ell +1}]] \subseteq B \subseteq V$. If $A$ is not a local ring, then the same proof as in Claim \ref{5.2} works, to get $A= k[[X]] \oplus B$. If $A$ is a local ring, then by the argument in \cite[Pages 2710--2711]{GTT} we have 
$
A \cong k[[X, Z]]/[(Z)\cap(X-Z^{2\ell + 1})] = k + J(\overline{R})$.
\end{proof}
Consequently, $\calA_R^0=\left\{k[[X]]\oplus k[[t^2,t^{2\ell +1}]], k+J(\overline{R})\right\}$. We have $$\left(k[[X]]\oplus k[[t^2,t^{2\ell +1}]]\right)/R \cong S/(X^2, Y)$$ and $k+J(\overline{R})=R+R(0,t)$. Therefore, Lemma \ref{lem3.1} shows the assertion, because $$R:\left(k[[X]]\oplus k[[t^2,t^{2\ell +1}]]\right) = (x^2, y) \ \ \text{and}\ \ R:\left(k+J(\overline{R})\right)=(x, y^{\ell+1}).$$

\medskip
{\bf (4)} ($D_n$) {\em $(\rm ii)$ $($The case where $n=2 \ell + 2$ with $\ell \ge 1$$)$.}
Let $R=k[[X,Y]]/(X^2Y-Y^{2\ell +1})$. We set $S=k[[X, Y]]$, $V=k[[t]]$, and $f=Y(X^2-Y^{2\ell})$. Consider the tower
$$
R=S/(f) \subseteq S/(Y)\oplus T = k[[X]] \oplus \overline{T} = \overline{R}
$$
of rings, where $T=S/(X^2-Y^{2\ell})$. By \cite[Page 2711]{GTT} an  intermediate ring $R \subsetneq A \subseteq \overline{R}$ is an AGL ring but not a Gorenstein ring, if $A$ is a local ring. Therefore, every $A \in \calA_R$ is not local, if $R\ne A$.

\begin{claim}\label{5.5}
$\calA_R = \left\{R, S/(X-Y^{\ell})\oplus S/(Y(X+Y^{\ell})), S/(X+Y^{\ell})\oplus S/(Y(X-Y^{\ell})\right)\} \cup \left\{k[[X]] \oplus T[\frac{x}{y^q}] \mid 0 \le q \le \ell \right\}$
\end{claim}

\begin{proof}[Proof of Claim \ref{5.5}]
Let $A \in \calA_R$ such that $R \ne A$. Note that $\overline{R}=k[[X]] \oplus S/(X-Y^{\ell}) \oplus S/(X+Y^{\ell})$. Let $\{e_i\}_{i=1,2,3}$ be the orthogonal primitive idempotents of $\overline{R}$. Then, $e_i \in A$ for some $1 \le i \le 3$, since $A$ is not a local ring. If $A \ne \overline{R}$, such $e_i$ is unique for $A$.

{\em $(\rm i)$ $($The case where $e_1 \in A$$)$.} Let $p:\overline{R} \to S/(X-Y^{\ell}) \oplus S/(X+Y^{\ell})$ denote the projection. Then  
$$
T:=S/{(X-Y^{\ell}) \cap (X+Y^{\ell})} \subseteq p(A) \subseteq \overline{T} = S/(X-Y^{\ell}) \oplus S/(X+Y^{\ell})
$$
so that, by \cite[Corollary 12.5 (2)]{GTT} $p(A) = T[\frac{x}{y^q}]$ for some $0 \le q < \ell$. Hence, $A = k[[X]] \oplus T[\frac{x}{y^q}]$.

{\em $(\rm ii)$ $($The case where $e_2 \in A$$)$.} Let $p:\overline{R} \to k[[X]] \oplus S/(X+Y^{\ell})$ denote the projection. Because $A \ne \overline{R}$, we have
$$
S/{(Y) \cap (X+Y^{\ell})} \subseteq p(A) \subsetneq k[[X]] \oplus S/(X+Y^{\ell}),
$$
which shows $p(A) = S/{(Y) \cap (X+Y^{\ell})} = S/(Y(X+Y^{\ell}))$. Thus, $A = S/(X-Y^{\ell}) \oplus S/(Y(X+Y^{\ell}))$. Similarly, $A=S/(X+Y^{\ell})\oplus S/(Y(X-Y^{\ell}))$ if $e_3 \in A$, which proves Claim \ref{5.5}.
\end{proof}

Therefore, $$\calA_R^0 =\left\{k[[X]] \oplus T, S/(X-Y^{\ell})\oplus S/(Y(X+Y^{\ell})), S/(X+Y^{\ell})\oplus S/(Y(X-Y^{\ell}))\right\},$$ so that $\calX_R= \left\{(x^2, y),  (x-y^{\ell}, y(x+y^{\ell})), (x+y^{\ell}, y(x-y^{\ell})) \right\}$.

\medskip
{\bf (5)} ($A_n$) {\em $(\rm i)$ $($The case where $n=2 \ell$ with $\ell \ge 1$$)$.}
Let $R=k[[t^2, t^{2\ell +1}]]$. Then, $\calA_R^0 = \{k[[t^2, t^{2q+1} \mid 0 \le q \le \ell -1]]\}$ by \cite[Corollary 12.5 (1)]{GTT}, whence $\calX_R =\{ (x, y^q) \mid 0 < q \le \ell \}$.

\medskip
{\bf (5)} ($A_n$) {\em $(\rm ii)$ $($The case where $n=2 \ell - 1$ with $\ell \ge 1$$)$.}
Let $R=k[[X,Y]]/(X^2-Y^{2\ell})$. We set $S=k[[X, Y]]$ and  $f=X^2-Y^{2\ell}=(X-Y^{\ell})(X+Y^{\ell})$. We then have $\ell_R(\overline{R}/R)=\ell$ by the exact sequence
$$
0 \to R=S/(f) \longrightarrow \overline{R}=S/(X-Y^\ell)\oplus S/(X+Y^\ell) \longrightarrow S/(X, Y^\ell) \to 0
$$
of $R$-modules.  Let $A \in \calA_R$ such that $R \ne A$. Then, by \cite[Corollary 12.5 (2)]{GTT} 
$
A = R\left[\frac{x}{y^q}\right]  \ \ \text{for some} \ \  0 < q \le \ell
$
in $\rmQ(R)$. If $n=\ell$, then $A = \overline{R}$ is a Gorenstein ring with $\mu_R(\overline{R})=2$, so that $(x, y^\ell)= R:\overline{R} \in \calX_R$.

Let us now assume that $0 < q < \ell$. Since $(\frac{x}{y^q})^2 =x^2y^{-2q} = y^{2\ell}y^{-2q}=y^{2(\ell - q)}\in R$, we have 
$
A=R+R{\cdot}\frac{x}{y^q}.
$
We will show that $A$ is a Gorenstein local ring with $\mu_R(A) =2$. Indeed, set let $\n = \m A+ \frac{x}{y^q}A$ of $A$, and let $M$ be an arbitrary maximal ideal of $A$. We choose a maximal ideal $N$ of $\overline{R}$ so that $M = N \cap A$. We then have 
$
N \supseteq J(\overline{R}) \supseteq y\overline{R} + \frac{x}{y^q}\overline{R},
$
whence $M=N\cap A \supseteq \n$, so that $M=\n$ because $\n$ is a maximal ideal of $A$. Hence, $(A,\n)$ is a local ring. Consequently,  $2 \le \mu_R(A) = \ell_R(A/\m A) \le \e(A) \le \e(R)=2$. Thus $A \in \calX_R^0$. Note that $R:A = R:_R\frac{x}{y^q}$, because $A=R+R\frac{x}{y^q}$. We now take $a \in R:\frac{x}{y^n}$. Then, setting $b=a{\cdot}\frac{x}{y^q} \in R$, we have $ax=by^q$, so that $AX-BY^q=C(X^2-Y^{2\ell})$ for some $C \in S$. Here $a, b$ are the images of $A, B$ respectively. Therefore 
$
X(A-CX)=Y^q(B-Y^{2\ell-q}).
$
Since $X, Y^q$ forms an $S$-regular sequence, we have
$
A-CX=Y^qD \text{ for some }D \in S.
$
Hence, $a \in (x,y^q)R$, so that $R:A=(x,y^q)$. Therefore $\calX_R = \{(x, y^q) \mid 0 < q \le \ell\}$. 
\end{proof}

\begin{rem}
The assertion on the ring of type $(A_n)$ also follows from \cite[Theorem 4.5]{GIK}. In fact, the ring $R$ of type $(A_n)$ has minimal multiplicity $2$. Hence, by \cite[Theorem 4.3]{GIK} $\calX_R$ is totally ordered with respect to inclusion, and $R:\overline{R}$ is the minimal element of $\calX_R$. 
\end{rem}



\end{document}